\newcommand{\bra}[1]{\left(#1\right)}
\newcommand{\sbra}[1]{\left[#1\right]}
\newcommand{\seq}[1]{\left<#1\right>}
\newcommand{\seqA}[1]{{\left<#1\right>}_{A}}
\newcommand{\vertiii}[1]{{\left\vert\kern-0.25ex\left\vert\kern-0.25ex\left\vert #1
    \right\vert\kern-0.25ex\right\vert\kern-0.25ex\right\vert}}
\newcommand{\norm}[1]{\left\Vert#1\right\Vert}
\newcommand{\normA}[1]{{\left\Vert#1\right\Vert}_{A}}
\newcommand{\abs}[1]{\left\vert#1\right\vert}
\newcommand{\set}[1]{\left\{#1\right\}}
\renewcommand{\c}{\mathbb C}
\newcommand{\N}{\mathbb N}
\renewcommand{\r}{\mathrm{ran}}
\newcommand {\bh}{\mathcal{B}(\mathcal{H})}
\newcommand {\bah}{\mathcal{B}^{A}(\mathcal{H})}
\newcommand {\n}{\ker}
\newcommand {\Sa}{S^{\sharp_{A}}}
\newcommand {\h}{\mathcal{H}}
\newcommand {\x}{\mathbf{x}}
\newcommand {\Ta}{T^{\sharp_{A}}}
\renewcommand {\b}{\mathcal{B}}
\newcommand {\Xa}{X^{\sharp_{A}}}
\newcommand {\Ya}{Y^{\sharp_{A}}}
\newtheorem{theorem}{Theorem}[section]
\newtheorem{lemma}[theorem]{Lemma}
\newtheorem{proposition}[theorem]{Proposition}
\newtheorem{corollary}[theorem]{Corollary}
\newtheorem{definition}[theorem]{Definition}
\newtheorem{example}[theorem]{Example}
\newtheorem{remark}[theorem]{Remark}
\newcommand\mystyle{\everymath{\displaystyle}}
\title{Inequalities for the $A$-Norm and $A$-Numerical Radius of Operator Sums in Semi-Hilbertian Spaces with Applications}
\author{\href{https://orcid.org/0000-0002-3816-5287}{\includegraphics[scale=0.06]{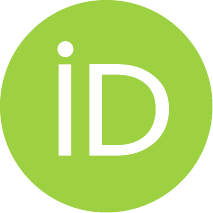}\hspace{1mm}M.H.M.~Rashid}\thanks{Corresponding Author} \\
	Department of Mathematics\&Statistics\\Faculty of Science P.O.Box(7)\\
	Mutah University University\\
	Mutah-Jordan \\
	\texttt{mrash@mutah.edu.jo}
}
\begin{document}
\maketitle

\begin{abstract}
	 This paper establishes several new inequalities for the $A$-norm and $A$-numerical radius of operator sums in semi-Hilbertian spaces, significantly advancing the existing theory. We present two fundamental refinements of the generalized triangle inequality for operator norms, providing sharper estimates than previously known results. Our investigation yields novel bounds for the $A$-numerical radius of products and commutators of operators, with particular attention to their Cartesian decompositions. The developed framework enables applications to quantum mechanics, where we derive improved uncertainty relations and perturbation bounds, and to partial differential equations, where we obtain stability estimates for nonlocal elliptic operators. Through concrete examples, we demonstrate the optimality of our inequalities and their advantages over classical results. The theoretical contributions are complemented by potential applications in functional analysis, operator theory, and mathematical physics, suggesting directions for future research in semi-Hilbertian operator theory.
\end{abstract}

\keywords{Positive operator\and Semi-inner product\and A-adjoint operator\and A-numerical radius\and   Inequality}

\section{Introduction}
On a complex Hilbert space $\bra{\h,\seq{\cdot,\cdot}}$, let $\bh$ be the $C^*$-algebra containing all bounded linear operators, and let $\norm{\cdot}$ be its associated norm. The identity operator on $\h$ is denoted by the symbol $I$. $\r(T)$ represents the range of $T$ for any $T\in\bh$, and $\overline{\r(T)}$ represents the norm closure of $\r(T)$. In this study, we assume that $A\in\bh$ is a positive operator and that $P$, represented as $\overline{\r(A)}$, is the orthogonal projection onto the closure of the range of $A$. For every $x\in\h$, a positive operator $A$, represented as $A\geq 0$, satisfies $\seq{Ax,x}\geq 0$. A positive semidefinite sesquilinear form $\seq{\cdot,\cdot}_A:\h\times \h\to\c$, given as $\seq{x,y}_A=\seq{Ax,y}$ for $x,y\in\h$, is induced by such an operator $A$. $\norm{x}_A=\sqrt{\seq{x,x}_A}$ yields the seminorm $\normA{\cdot}$ generated by $\seq{\cdot,\cdot}_A$ for each $x\in\h$.  Notably, $\norm{\cdot}_A$ functions as a norm if and only if $A$ is an injective operator, and the space $\bra{\h,\norm{\cdot}_A}$ is complete if and only if $\r(A)$ is closed in $\h$. The semi-inner product $\seq{\cdot,\cdot}_A$ induces a semi-norm on a specific subspace of $\bh$. Specifically, for $T\in\bh$, if there exists $c>0$ such that $\norm{Tx}_A\leq c\norm{x}_A$ for all $x\in\overline{\r(A)}$, then
\begin{equation*}
 \normA{T}:=\sup_{\substack{x\in\overline{\r(A)}\\x\neq 0}}\frac{\normA{Tx}}{\normA{x}}=\inf\set{c>0:\normA{Tx}\leq c\normA{x},x\in \overline{\r(A)}}<\infty.
\end{equation*}

We set $\b^{A}(\h):=\set{T\in\bh:\normA{T}<\infty}$ . It can be seen that $\bah$ is not generally
a subalgebra of $\bh$ and $\normA{T}=0$ if and only if $ATA=0$. In addition, for $T\in\bah$,
we have
\begin{equation*}
  \normA{T}=\sup\set{\abs{\seqA{Tx,y}}:x,y\in\overline{\r(A)},\normA{x}=\normA{y}=1}.
\end{equation*}
An operator $T$ is called $A$-positive if $AT \geq 0$. Note that if $T$ is $A$-positive, then
\begin{equation*}
  \norm{T}_A=\sup\set{\seqA{Tx,x}:x\in\h,\norm{x}_A=1}.
\end{equation*}

If, for every $x,y\in\h$, the equality $\seqA{Tx,y}=\seqA{x,Sy}$ holds, that is, $AS=T^*A$, then an operator $S\in\bh$ is a $A$-adjoint of $T$. It is crucial to remember that there is no guarantee that a $A$-adjoint operator will exist. An operator $T\in\bh$ might really have one, several, or none of the $A$-adjoints. $\b_A(\h)$ represents the set of all operators that have $A$-adjoints. Notably, $\b_A(\h)$ creates a subalgebra of $\bh$ that is neither dense in $\bh$ nor closed. Additionally, equality is attained if $A$ is injective and has a closed range, and the inclusions $\b_A(\h)\subseteq \bah\subseteq \bh$ hold. This is further illustrated by the use of Douglas' Theorem \cite{Douglas}.

\begin{eqnarray}\label{Ineq.A1}
  \b_A(\h) &=& \set{T\in\bh:\r(T^*A)\subseteq \r(A)} \\
   &=& \set{T\in\bh:\exists \lambda>0: \norm{ATx}\leq \lambda\norm{Ax},\,\forall x\in\h}. \nonumber
\end{eqnarray}

If $T\in\b_A(\h)$, the reduced solution of the equation $AX=T^*A$  is a distinguished
$A$-adjoint operator of $T$, which is denoted by $T^{\sharp_A}$; see \cite{MKX}. Note that $T^{\sharp_A}=A^{\dag}T^*A$
in which $A^{\dag}$ is the Moore-Penrose inverse of $A$. It is useful that if $T\in\b_A(\h)$, then
$AT^{\sharp_A}=T^*A$. An operator $T\in\bh$ is said to be $A$-selfadjoint if $AT$ is selfadjoint,
i.e., $AT=T^*A$. Observe that if $T$ is $A$-selfadjoint, then $T\in\b_A(\h)$. However, it does
not hold, in general, that $T=\Ta$. For example, consider the operators $A=\begin{bmatrix}1& 1 \\1 &1 \\\end{bmatrix}$
and $T=\begin{bmatrix}2& 2 \\0 &0 \\\end{bmatrix}$. Then simple computations show that $T$ is $A$-selfadjoint and
$\Ta=\begin{bmatrix}1& 1 \\1 &1 \\\end{bmatrix}\neq T$. More precisely, if $T\in\b_A(\h)$, then $\Ta\in\b_A(\h)$ if and only if $T$ is $A$-selfadjoint
and $\overline{\r(A)}\subseteq \r(A)$. Notice that if $T\in\b_A(\h)$, then $\Ta\in\b_A(\h)$, $(\Ta)^{\sharp_A}=PTP$ and
$\bra{(\Ta)^{\sharp_A}}^{\sharp_A}=\Ta$. In addition, $\Ta T$ and $T\Ta$ are $A$-selfadjoint and $A$-positive, and so we
have
\begin{equation}\label{Eq.1.1}
  \normA{\Ta T}=\normA{T\Ta}=\normA{T}^2=\normA{\Ta}^2.
\end{equation}
Furthermore, if $T, S\in\b_A(\h)$, then $(TS)^{\sharp_A}=S^{\sharp_A}\Ta$, $\normA{TS}\leq \normA{T}\normA{S}$ and
$\normA{Tx}\leq \normA{T}\normA{x}$ for all $x\in\h$.

For proofs and more facts about this class of operators, we refer the reader to \cite{ACG1, ACG2}
and their references.

Again, by applying Douglas' theorem, it can observed that
\begin{equation*}
  \b_{A^{1/2}}(\h)=\set{T\in\bh:\exists \lambda>0: \normA{Tx}\leq \lambda\normA{x},\,\forall x\in\h}.
\end{equation*}
Operators in $\b_{A^{1/2}}(\h)$ are called A-bounded. Notice that, if $T\in \b_{A^{1/2}}(\h)$, then
$T(\n(T))\subseteq \n(T)$. Moreover, it was proved in \cite{FagGor} that if  $T\in \b_{A^{1/2}}(\h)$, then
\begin{eqnarray}\label{Ineq.A2}
  \normA{T} &=& \sup\set{\normA{Tx}:x\in\h,\normA{x}=1} \\
   &=& \sup\set{\abs{\seqA{Tx,y}}:\normA{x}=\normA{y}=1,x,y\in\h}. \nonumber
\end{eqnarray}

Note that the subspaces $\b_A(\h)$ and $\b_{A^{1/2}}(\h)$ are two subalgebras of $\bh$,  which
are neither closed nor dense in $\bh$. Moreover, we have the following inclusions
\begin{equation}\label{Ineq.A3}
  \b_A(\h) \subseteq \b_{A^{1/2}}(\h)\subseteq \bah\subseteq \bh.
\end{equation}
For an operator $T\in\b_A(\h)$, we write
\begin{equation*}
  Re_A(T)=\frac{1}{2}\bra{T+\Ta}\quad\mbox{and}\quad Im_A(T)=\frac{1}{2i}\bra{T-\Ta}.
\end{equation*}
For $T\in\b_A(\h)$, $A$-numerical radius of $T$, denoted as $w_A(T)$ (see \cite{BFS}), is defined as
\begin{equation}\label{Num-A}
  w_A(T)=\sup\set{\abs{\seqA{Tx,x}}:x\in\h,\normA{x}=1}.
\end{equation}
Also, for $T\in\b_A(\h)$, the $A$-Crawford number of $T$, denoted as $c_A(T)$ (see \cite{Zamani-1}), is
defined as
\begin{equation}\label{Craf}
  c_A(T)=\inf\set{\abs{\seqA{Tx,x}}:x\in\h,\normA{x}=1}.
\end{equation}
For $T\in\b_A(T)$, it is well known that the $A$-numerical radius of $T$ is equivalent to
$A$-operator semi-norm of $T$, (see \cite{Zamani-2}), satisfying the following inequality:
\begin{equation}\label{equiv-Num-norm}
  \frac{1}{2}\normA{T}\leq w_A(T)\leq \normA{T}.
\end{equation}
 One of the
important properties of $w_A(\cdot)$ is that it is weakly $A$-unitarily invariant (see \cite{BPN}),
that is,
\begin{equation}\label{A-unitary}
  w_A\bra{U^{\sharp_A}TU}=w_A(T),
\end{equation}
for every $A$-unitary $U\in\b_A(\h)$. Another basic fact about the $A$-numerical radius
is the power inequality (see \cite{MXZ}), which asserts that
\begin{equation}\label{Power}
  w_A(T^n)\leq w_A^n(T)\,\,(n\in\N).
\end{equation}
It is evident from the first inequality in (\ref{equiv-Num-norm}) that if $T,S\in\b_A(\h)$, then
\begin{equation}\label{product-1}
  w_A(TS)\leq 4w_A(T)w_A(S).
\end{equation}
If $TS=ST$, then
\begin{equation}\label{product-2}
  w_A(TS)\leq 2w_A(T)w_A(S).
\end{equation}
Moreover, if $T,S,\in\b_A(\h)$ are $A$-normal, then
\begin{equation}\label{product-3}
  w_A(TS)\leq w_A(T)w_A(S).
\end{equation}

The study of numerical radius inequalities has a rich history in operator theory, with foundational contributions documented in \cite{ACG1, ACG2, ACG3, BFP, BPN, FagGor, Feki, Zamani-1, Zamani-2}. These investigations have been particularly fruitful in the context of semi-Hilbertian spaces, where Zamani's work \cite{Zamani-1, Zamani-2} established fundamental inequalities for the $A$-numerical radius that have become cornerstones of the theory. Building on these foundations, Ahmad \cite{Ahmad} introduced an innovative norm for $A$-bounded linear operators that generalizes the $A$-numerical radius norm, while recent comprehensive treatments in \cite{Dolat, KitRash} have further advanced the field through systematic analyses of operator matrix inequalities.

Our present work makes significant contributions to this active research area by developing new inequalities for the $A$-norm and $A$-numerical radius of operator sums in Hilbert spaces. Specifically, we refine the generalized triangle inequality for operator norms (Theorems~\ref{TheoremA1} and~\ref{TheoremB1}), improve bounds for operators with Cartesian decompositions (see the remark following Theorem~\ref{TheoremB1}), and establish novel relationships between the $A$-numerical radius and operator products (Theorem~\ref{TheoremQA1} and Corollary~\ref{Corollary3.7}). These results not only extend the existing literature but also provide sharper tools for analyzing operator behavior in semi-Hilbertian settings. The applications we develop, particularly those involving Cartesian decompositions, offer new perspectives on classical problems while demonstrating the practical utility of our theoretical advances.

\section {$A$-norm inequalities}
Several inequalities are provided in this section for the operator $A$-norm of sums of bounded linear operators in semi-Hilbertian spaces.
\begin{theorem}\label{TheoremA1} For any $X_1,\cdots, X_n\in\b_A(\h)$ we have
\begin{equation}\label{RA1}
  \normA{\sum_{k=1}^{n} X_k}^2\leq \normA{\sum_{k=1}^{n}X_k^{\sharp_A}X_k}^2+\frac{1}{2}\normA{(n-2)\sum_{k=1}^{n}
  X_k^{\sharp_A}X_k+\sum_{k=1}^{n}
  X_k^{\sharp_A}\sum_{k=1}^{n}X_k}.
\end{equation}
\end{theorem}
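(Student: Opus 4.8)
The plan is to move the estimate from the level of operators down to the level of vectors, where it becomes elementary, and then take a supremum. Write $T=\sum_{k=1}^{n}X_k$ and $S=\sum_{k=1}^{n}X_k^{\sharp_A}X_k$. Since $\b_A(\h)$ is an algebra we have $T\in\b_A(\h)\subseteq\b_{A^{1/2}}(\h)$ by \eqref{Ineq.A3}, so \eqref{Ineq.A2} gives $\normA{T}^2=\sup\set{\normA{Tx}^2:\normA{x}=1}$. The first step is to record the algebraic identity
\begin{equation*}
  \sum_{1\le j<k\le n}(X_j+X_k)^{\sharp_A}(X_j+X_k)=(n-2)S+T^{\sharp_A}T ,
\end{equation*}
obtained by expanding the left-hand side, using linearity of $Y\mapsto Y^{\sharp_A}$, together with $\sum_{j<k}(X_j^{\sharp_A}X_j+X_k^{\sharp_A}X_k)=(n-1)S$; note that $T^{\sharp_A}T=\bra{\sum_kX_k^{\sharp_A}}\bra{\sum_kX_k}$ is exactly the product appearing in \eqref{RA1}, so the right-hand side above equals $(n-2)\sum_kX_k^{\sharp_A}X_k+\sum_kX_k^{\sharp_A}\sum_kX_k$. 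Call this operator $\Sigma$. For $n\ge2$ both $S$ and $\Sigma$ are $A$-positive (finite sums, with non-negative coefficients, of operators of the form $Y^{\sharp_A}Y$), and the case $n=1$ is trivial since then $\Sigma=0$.

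Next I would fix $x\in\h$ with $\normA{x}=1$ and put $v_k:=X_kx$. Using $AY^{\sharp_A}=Y^{*}A$ one gets $\seqA{Sx,x}=\sum_{k}\normA{v_k}^2$ and $\seqA{\Sigma x,x}=\sum_{j<k}\normA{v_j+v_k}^2$, while $\normA{Tx}^2=\normA{\sum_k v_k}^2$. Because $S$ and $\Sigma$ are $A$-positive, the norm formula for $A$-positive operators recalled in the introduction gives $\seqA{Sx,x}\le\normA{S}$ and $\seqA{\Sigma x,x}\le\normA{\Sigma}$. Hence it suffices to prove the elementary inequality
\begin{equation*}
  \normA{\sum_{k=1}^{n}v_k}^2\le\sum_{k=1}^{n}\normA{v_k}^2+\frac12\sum_{1\le j<k\le n}\normA{v_j+v_k}^2
\end{equation*}
for arbitrary $v_1,\dots,v_n\in\h$: indeed, then $\normA{Tx}^2\le\seqA{Sx,x}+\tfrac12\seqA{\Sigma x,x}\le\normA{S}+\tfrac12\normA{\Sigma}$ for every unit vector $x$, and taking the supremum over such $x$ establishes \eqref{RA1}.

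To prove this vector inequality, expand both sides through the semi-inner product: $\normA{\sum_k v_k}^2=\sum_k\normA{v_k}^2+2\sum_{j<k}\mathrm{Re}\,\seqA{v_j,v_k}$, and, using once more $\sum_{j<k}(\normA{v_j}^2+\normA{v_k}^2)=(n-1)\sum_k\normA{v_k}^2$, also $\tfrac12\sum_{j<k}\normA{v_j+v_k}^2=\tfrac{n-1}{2}\sum_k\normA{v_k}^2+\sum_{j<k}\mathrm{Re}\,\seqA{v_j,v_k}$. Subtracting, the inequality is equivalent to
\begin{equation*}
  \sum_{1\le j<k\le n}\mathrm{Re}\,\seqA{v_j,v_k}\le\frac{n-1}{2}\sum_{k=1}^{n}\normA{v_k}^2 ,
\end{equation*}
which is immediate from the Cauchy--Schwarz inequality $\abs{\seqA{v_j,v_k}}\le\normA{v_j}\normA{v_k}$ for the positive semidefinite form $\seqA{\cdot,\cdot}$, followed by $2\normA{v_j}\normA{v_k}\le\normA{v_j}^2+\normA{v_k}^2$ and the same index count. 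Equality holds exactly when all $v_k$, hence all $X_k$, coincide, which matches the equality case of the theorem.

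The one genuine difficulty is realizing that the estimate must be done pointwise in $x$. The natural operator-level attack --- decomposing $T^{\sharp_A}T=\tfrac12\Sigma+\bra{T^{\sharp_A}T-\tfrac12\Sigma}$ and controlling the remainder by $\normA{S}$ via the triangle inequality together with the order bounds $0\le T^{\sharp_A}T\le nS$ --- goes through only for small $n$, because the triangle inequality discards precisely the Cauchy--Schwarz cancellation among the cross terms $\seqA{v_j,v_k}$ that the vector argument exploits; working pointwise is what keeps the constant $\tfrac12$ sharp for every $n$. The remaining points --- the index bookkeeping, the $A$-positivity of $S$ and $\Sigma$, and the identifications $\seqA{Sx,x}=\sum_k\normA{X_kx}^2$, $\seqA{\Sigma x,x}=\sum_{j<k}\normA{(X_j+X_k)x}^2$ --- are routine. (Here the first term on the right of \eqref{RA1} is read as $\normA{\sum_kX_k^{\sharp_A}X_k}$ without the displayed square, which otherwise makes the inequality fail already at $n=1$.)
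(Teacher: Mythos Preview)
Your proof is correct and follows essentially the same route as the paper's: both arguments work pointwise on a unit vector $x$, invoke the elementary inequality $\mathrm{Re}\,\seqA{a,b}\le\tfrac14\normA{a+b}^2$ (equivalently $\mathrm{Re}\,\seqA{a,b}\le\tfrac12(\normA{a}^2+\normA{b}^2)$, which is precisely your Cauchy--Schwarz plus AM--GM step), establish the same algebraic identity $\sum_{j<k}(X_j+X_k)^{\sharp_A}(X_j+X_k)=(n-2)S+T^{\sharp_A}T$ (the paper writes it over $j\neq k$, introducing a harmless factor of $2$), and then take the supremum using $A$-positivity. You also correctly flag the spurious square on the first right-hand term of \eqref{RA1}; the paper's own proof yields $\normA{S}$ there, not $\normA{S}^2$.
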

\begin{proof} Utilising the elementary inequality
\begin{equation}\label{RA2}
  Re\seqA{a,b}\leq \frac{1}{4}\normA{a+b}^2,\,\, a,b\in\h,
\end{equation}
we then have
\begin{eqnarray*}
  \sum_{\substack{1\leq k,j\leq n\\ j\neq k}}^{n}Re\seqA{X_k x,X_j x} &\leq&\frac{1}{4}\sum_{\substack{1\leq k,j\leq n\\ j\neq k}}^{n}
  \normA{X_k x+X_j x}^2 \\
   &=&\frac{1}{4}\sum_{\substack{1\leq k,j\leq n\\ j\neq k}}^{n}\seqA{\bra{X_k+X_j}^{\sharp_A}\bra{X_k+X_j}x,x}\\
   &=& \frac{1}{4}\sum_{\substack{1\leq k,j\leq n\\ j\neq k}}^{n}\seqA{\bra{X_k^{\sharp_A}+X_j^{\sharp_A}}\bra{X_k+X_j}x,x}
\end{eqnarray*}
Now, on making use of the following identity
\begin{eqnarray}\label{RA3}
  \normA{\sum_{k=1}^{n} X_k}^2&=&Re\sbra{\sum_{j,k=1}^{n}\seqA{X_k x,X_j x}}=\sum_{j,k=1}^{n}Re\seqA{X_k x,X_j x}\nonumber \\
   &=& \sum_{k=1}^{n}\normA{X_k x}^2+\sum_{\substack{1\leq k,j\leq n\\ j\neq k}}^{n}Re\seqA{X_k x,X_j x}\\
   &=&\seqA{\bra{\sum_{k=1}^{n}X_k^{\sharp_A}X_k}x,x}+Re\seqA{\bra{\sum_{\substack{1\leq k,j\leq n\\ j\neq k}}^{n}X_k^{\sharp_A}X_k}x,x},\nonumber
\end{eqnarray}
we can state
\begin{equation}\label{RA4}
  \normA{\sum_{k=1}^{n} X_k}^2\leq \seqA{\bra{\sum_{k=1}^{n}X_k^{\sharp_A}X_k}x,x}+
  \frac{1}{4}\sum_{\substack{1\leq k,j\leq n\\ j\neq k}}^{n}\seqA{\bra{X_k^{\sharp_A}+X_j^{\sharp_A}}\bra{X_k+X_j}x,x}
\end{equation}
for any $x\in\h$. Since
\begin{eqnarray}\label{RA5}
  &&\sum_{\substack{1\leq k,j\leq n\\ j\neq k}}^{n}\seqA{\bra{X_k^{\sharp_A}+X_j^{\sharp_A}}\bra{X_k+X_j}x,x}\nonumber\\
  &&=\sum_{k,j=1}^{n}\bra{X_k^{\sharp_A}X_k+X_k^{\sharp_A}X_j+X_j^{\sharp_A}X_k+X_j^{\sharp_A}X_j}
  -4\sum_{k=1}^{n}X_k^{\sharp_A}X_k\nonumber\\
  &&=2n\sum_{k=1}^{n}X_k^{\sharp_A}X_k+2\sum_{k=1}^{n}X_k^{\sharp_A}\sum_{k=1}^{n}X_k-4\sum_{k=1}^{n}X_k^{\sharp_A}X_k\nonumber\\
  &&=2\sbra{(n-2)\sum_{k=1}^{n}X_k^{\sharp_A}X_k+\sum_{k=1}^{n}X_k^{\sharp_A}\sum_{k=1}^{n}X_k}.
\end{eqnarray}
Combining the inequalities (\ref{RA4}) and (\ref{RA5}), we obtain
\begin{equation*}
  \normA{\sum_{k=1}^{n} X_k}^2\leq \seqA{\bra{\sum_{k=1}^{n}X_k^{\sharp_A}X_k}x,x}+\frac{1}{2}\sbra{(n-2)\sum_{k=1}^{n}X_k^{\sharp_A}X_k+\sum_{k=1}^{n}X_k^{\sharp_A}\sum_{k=1}^{n}X_k}.
\end{equation*}
Taking the supremum over all vectors $x\in \h$ with $\normA{x}=1$ in the above inequality, we get the desired inequality.
\end{proof}
\begin{remark} Since, by the triangle inequality we have that
$$\normA{(n-2)\sum_{k=1}^{n}X_k^{\sharp_A}X_k+\sum_{k=1}^{n}X_k^{\sharp_A}\sum_{k=1}^{n}X_k}
\leq (n-2)\normA{\sum_{k=1}^{n}X_k^{\sharp_A}X_k}+\normA{\sum_{k=1}^{n}X_k}^2.$$
Hence by the inequality (\ref{RA1}) we deduce that
$$\normA{\sum_{k=1}^{n} X_k}^2\leq n  \normA{\sum_{k=1}^{n}X_k^{\sharp_A}X_k}.$$
\end{remark}
\begin{remark} The case $n=2$ provides the following interesting inequality
\begin{equation}\label{RA6}
  \normA{\frac{B+C}{2}}^2\leq \normA{\frac{B^{\sharp_A}B+C^{\sharp_A}C}{2}}
\end{equation}
for any $B,C\in\b_A(\h)$. If in this inequality we choose $B=X$ and $C=\Xa$, then we get
\begin{equation}\label{RA7}
  \normA{\frac{X+\Xa}{2}}^2\leq \normA{\frac{\Xa X+X\Xa}{2}}
\end{equation}
for any $X\in\b_A(\h)$.

Moreover, if $T=X+iY$ is the Cartesian decomposition of $T$ with $X=\frac{T+\Ta}{2}$ and $Y=\frac{T-\Ta}{2i}$, then we get
 \begin{equation}\label{RA8}
   \normA{T}^2\leq \normA{\Ta T+T\Ta}
 \end{equation}
 for any $T\in\b_A(\h)$.
\end{remark}
\begin{theorem}\label{TheoremB1} For any $X_1,\cdots, X_n\in\b_A(\h)$ we have
\begin{eqnarray}\label{RB1}
  \normA{\sum_{k=1}^{n} X_k}^2&\leq& \normA{\sum_{k=1}^{n}X_k^{\sharp_A}X_k}+\frac{1}{2}
  \normA{\sum_{\substack{1\leq k,j\leq n\\ j\neq k}}^{n}X_k^{\sharp_A}X_k}^2+\frac{1}{2}\\
   &=& \normA{\sum_{k=1}^{n}X_k^{\sharp_A}X_k}+\frac{1}{2}\normA{\sum_{j=1}^{n}X_j^{\sharp_A}\sum_{k=1}^{n}X_k-
   \sum_{k=1}^{n}X_k^{\sharp_A}X_k}^2+\frac{1}{2}.\nonumber
\end{eqnarray}
\end{theorem}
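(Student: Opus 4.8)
The plan is to follow the scheme of the proof of Theorem~\ref{TheoremA1}, but to replace the elementary estimate (\ref{RA2}) by the scalar inequality $t\le\frac{1}{2}\bra{1+t^{2}}$, which holds for every real $t$ (being merely $\bra{t-1}^{2}\ge 0$) and which will account for the free additive constant $\frac{1}{2}$ in (\ref{RB1}). First I would fix $x\in\h$ with $\normA{x}=1$ and, exactly as in the identity (\ref{RA3}), expand
\begin{equation*}
  \normA{\bra{\sum_{k=1}^{n}X_{k}}x}^{2}=\sum_{k=1}^{n}\normA{X_{k}x}^{2}+\sum_{\substack{1\le k,j\le n\\ j\ne k}}Re\seqA{X_{k}x,X_{j}x},
\end{equation*}
which is legitimate since the left-hand side is a nonnegative real number and the right-hand side is a finite sum of semi-inner products. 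I would then estimate the diagonal and off-diagonal sums separately.

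For the diagonal sum, $\sum_{k}\normA{X_{k}x}^{2}=\seqA{\bra{\sum_{k}X_{k}^{\sharp_{A}}X_{k}}x,x}$, and since $A X_{k}^{\sharp_{A}}X_{k}=X_{k}^{*}AX_{k}\ge 0$, each $X_{k}^{\sharp_{A}}X_{k}$, and hence also $S_{0}:=\sum_{k}X_{k}^{\sharp_{A}}X_{k}$, is $A$-positive; by the formula for the $A$-norm of an $A$-positive operator recalled in the Introduction, the diagonal sum is therefore at most $\normA{S_{0}}$. For the off-diagonal sum, I would introduce $S:=\sum_{\substack{1\le k,j\le n\\ j\ne k}}X_{k}^{\sharp_{A}}X_{j}\in\b_{A}(\h)$. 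Using that $T\mapsto T^{\sharp_{A}}$ is additive, together with the distributive law, one has $\bra{\sum_{k}X_{k}}^{\sharp_{A}}\bra{\sum_{k}X_{k}}=S_{0}+S$, that is,
\begin{equation*}
  S=\sum_{j=1}^{n}X_{j}^{\sharp_{A}}\sum_{k=1}^{n}X_{k}-\sum_{k=1}^{n}X_{k}^{\sharp_{A}}X_{k},
\end{equation*}
which is precisely the identity behind the equality in (\ref{RB1}). Since $\sum_{k\ne j}Re\seqA{X_{k}x,X_{j}x}=Re\seqA{Sx,x}$, the scalar inequality together with $\abs{\seqA{Sx,x}}\le w_{A}(S)\le\normA{S}$ (from (\ref{equiv-Num-norm})) gives
\begin{equation*}
  Re\seqA{Sx,x}\le\abs{\seqA{Sx,x}}\le\frac{1}{2}\bra{1+\abs{\seqA{Sx,x}}^{2}}\le\frac{1}{2}\bra{1+\normA{S}^{2}}.
\end{equation*}
Adding the two estimates, for every $x$ with $\normA{x}=1$ we obtain $\normA{\bra{\sum_{k}X_{k}}x}^{2}\le\normA{S_{0}}+\frac{1}{2}\normA{S}^{2}+\frac{1}{2}$; taking the supremum over all such $x$ and recalling $\normA{\sum_{k}X_{k}}^{2}=\sup_{\normA{x}=1}\normA{\bra{\sum_{k}X_{k}}x}^{2}$ from (\ref{Ineq.A2}) yields (\ref{RB1}).

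I do not expect a genuine obstacle here; the only points that need care are the legitimacy of the diagonal/off-diagonal split in the semi-Hilbertian setting (handled exactly as in (\ref{RA3})), the $A$-positivity of $S_{0}$ which is what licenses $\seqA{S_{0}x,x}\le\normA{S_{0}}$, and the bookkeeping identity $S=\bra{\sum_{k}X_{k}}^{\sharp_{A}}\bra{\sum_{k}X_{k}}-S_{0}$. Once these are in place, the scalar inequality $t\le\frac{1}{2}\bra{1+t^{2}}$ applied with $t=\abs{\seqA{Sx,x}}$ does all the remaining work, so the whole proof is a single-vector estimate followed by a supremum, structurally parallel to (and marginally simpler than) that of Theorem~\ref{TheoremA1}.
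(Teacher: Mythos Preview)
Your proof is correct and follows essentially the same route as the paper's: both split $\normA{\bra{\sum_k X_k}x}^{2}$ into the diagonal part $\seqA{S_{0}x,x}$ and the off-diagonal part $Re\seqA{Sx,x}$, bound the latter by $\tfrac{1}{2}+\tfrac{1}{2}\normA{S}^{2}$, take the supremum over $\normA{x}=1$, and then record the algebraic identity $S=\bra{\sum_j X_j}^{\sharp_A}\bra{\sum_k X_k}-S_{0}$. The only cosmetic difference is that the paper invokes (\ref{RA2}) with $a=Sx$, $b=x$ to obtain $Re\seqA{Sx,x}\le\tfrac{1}{2}\bra{\normA{Sx}^{2}+1}$, whereas you use the scalar inequality $t\le\tfrac{1}{2}\bra{1+t^{2}}$ with $t=\abs{\seqA{Sx,x}}$; after the supremum both give the same bound (yours is in fact pointwise sharper, since $\abs{\seqA{Sx,x}}\le\normA{Sx}$).
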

\begin{proof} For any $x\in\h$, observe that
\begin{equation*}
  \normA{\sum_{k=1}^{n} X_k}^2=\seqA{\bra{\sum_{k=1}^{n}X_k^{\sharp_A}X_k}x,x}+Re\seqA{\bra{\sum_{\substack{1\leq k,j\leq n\\ j\neq k}}^{n}X_k^{\sharp_A}X_k}x,x}.
\end{equation*}
Then by using the inequality (\ref{RA2}), we have
\begin{equation*}
  \normA{\sum_{k=1}^{n} X_k}^2\leq \seqA{\bra{\sum_{k=1}^{n}X_k^{\sharp_A}X_k}x,x}+\frac{1}{2}
  \sbra{\normA{\sum_{\substack{1\leq k,j\leq n\\ j\neq k}}^{n}\bra{X_j^{\sharp_A}X_k}x}^{2}+\normA{x}^2}.
\end{equation*}
Taking the supremum over all vectors $x\in\h$ with $\normA{x}=1$, we get
\begin{equation}\label{RB2}
  \normA{\sum_{k=1}^{n} X_k}^2\leq \normA{\sum_{k=1}^{n}X_k^{\sharp_A}X_k}+
  \frac{1}{2}\normA{\sum_{\substack{1\leq k,j\leq n\\ j\neq k}}^{n}\bra{X_j^{\sharp_A}X_k}x}^{2}+\frac{1}{2}.
\end{equation}
Since
$$\sum_{\substack{1\leq k,j\leq n\\ j\neq k}}^{n}X_j^{\sharp_A}X_k=\sum_{j=1}^{n}X_j^{\sharp_A}\sum_{k=1}^{n}X_k-\sum_{k=1}^{n}
X_k^{\sharp_A}X_k.$$
The last part of the inequality (\ref{RB1}) is also proved.
\end{proof}
\begin{remark} For $n=2$, we can state that
\begin{equation}\label{RT1}
  \normA{T+S}^2\leq \normA{\Ta T+\Sa S}+\frac{1}{2}\normA{\Ta S+\Sa T}^2+\frac{1}{2}
\end{equation}
for any $T,S\in\b_A(\h)$. If in this inequality we choose $T=X$ and $S=\Xa$, then we get
\begin{equation}\label{RT2}
  \normA{X+\Xa}^2\leq \normA{\Xa X+X\Xa}+\frac{1}{2}\normA{\bra{\Xa}^{2} +X^2}^2+\frac{1}{2}
\end{equation}
for any $X\in\b_A(\h)$.

Now, if $T=X+iY$ is the Cartesian decomposition of $T$, then applying (\ref{RT1}) for $X$ and $Y$, we get
\begin{equation}\label{RT3}
  \normA{T}^2\leq \frac{1}{2}\normA{\Ta T+T\Ta}+\frac{1}{4}\normA{\Ta T-T\Ta}^2+\frac{1}{2}
\end{equation}
for any $T\in\b_A(\h)$.
\end{remark}
A similar approach which provides another inequality for the operator
$A$-norm is incorporated in:
\begin{theorem}For any $X_1,\cdots, X_n\in\b_A(\h)$ we have
  \begin{equation}\label{CT1}
    \normA{\sum_{k=1}^{n}X_k}^2+\sum_{k=1}^{n}\normA{X_k}^2
    \leq \normA{\sum_{k=1}^{n}X_k^{\sharp_A}X_k}+\frac{1}{4}\sum_{j,k=1}^{n}\normA{X_j+X_k}^2.
  \end{equation}
\end{theorem}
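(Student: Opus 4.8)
The plan is to follow the same pattern as the proof of Theorem~\ref{TheoremB1}, but to estimate only the off-diagonal cross terms and to leave the diagonal untouched. Fix $x\in\h$ with $\normA{x}=1$. By the identity (\ref{RA3}),
\[
  \normA{\sum_{k=1}^{n} X_k x}^2=\seqA{\Big(\sum_{k=1}^{n}X_k^{\sharp_A}X_k\Big)x,x}+\sum_{\substack{1\leq k,j\leq n\\ j\neq k}}Re\seqA{X_k x,X_j x}.
\]
Each $X_k^{\sharp_A}X_k$ is $A$-positive, hence so is $\sum_{k=1}^{n} X_k^{\sharp_A}X_k$, and therefore the first term on the right does not exceed $\normA{\sum_{k=1}^{n} X_k^{\sharp_A}X_k}$.

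For the cross terms I would apply the elementary inequality (\ref{RA2}) with $a=X_k x$ and $b=X_j x$ \emph{inside} the sum over the ordered pairs $(k,j)$ with $j\neq k$, and then use $\normA{(X_k+X_j)x}\leq\normA{X_k+X_j}$ (valid since $\normA{x}=1$). This produces, for every unit vector $x$,
\[
  \normA{\sum_{k=1}^{n} X_k x}^2\leq\normA{\sum_{k=1}^{n}X_k^{\sharp_A}X_k}+\frac14\sum_{\substack{1\leq k,j\leq n\\ j\neq k}}\normA{X_j+X_k}^2,
\]
and taking the supremum over such $x$ yields the same bound for $\normA{\sum_{k=1}^{n} X_k}^2$.

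It remains to absorb the diagonal. Since $\normA{X_k+X_k}^2=4\normA{X_k}^2$, one has $\sum_{k=1}^{n}\normA{X_k}^2=\tfrac14\sum_{k=1}^{n}\normA{X_k+X_k}^2$; adding this quantity to both sides of the previous display turns the restricted double sum $\sum_{j\neq k}$ into the full one $\sum_{j,k=1}^{n}$, which is precisely (\ref{CT1}). I do not expect a real obstacle: the only points needing care are to invoke (\ref{RA2}) before collapsing the double sum (so the estimate involves $\normA{X_j+X_k}$ rather than $\normA{(X_j+X_k)x}$), to use the $A$-positivity of $\sum_{k}X_k^{\sharp_A}X_k$ in order to replace $\seqA{(\,\cdot\,)x,x}$ by $\normA{\,\cdot\,}$ after passing to the supremum, and to notice that the summand $\sum_{k}\normA{X_k}^2$ on the left-hand side is exactly the missing diagonal of the double sum on the right.
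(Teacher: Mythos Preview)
Your proposal is correct and follows essentially the same route as the paper: both arguments start from the identity (\ref{RA3}), apply (\ref{RA2}) termwise to the off-diagonal cross terms, pass from $\normA{(X_j+X_k)x}$ to $\normA{X_j+X_k}$, take the supremum over $A$-unit vectors (using $A$-positivity of $\sum_k X_k^{\sharp_A}X_k$), and then use $\tfrac14\normA{X_k+X_k}^2=\normA{X_k}^2$ to relate the off-diagonal double sum to the full one. The only cosmetic difference is that the paper subtracts the diagonal from the right-hand side whereas you add it to the left; the content is identical.
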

\begin{proof} Since, by the proof of Theorem \ref{TheoremA1}, we have
  $$\sum_{\substack{1\leq k,j\leq n\\ j\neq k}}^{n}Re\seqA{X_k x,X_j x}\leq \frac{1}{4}
  \sum_{\substack{1\leq k,j\leq n\\ j\neq k}}^{n}\normA{X_k+X_j}^2,$$
  then, by (2.2), we can state that:
  \begin{equation}\label{CT2}
    \normA{\sum_{k=1}^{n} X_k x}^2=\seqA{\bra{\sum_{k=1}^{n}X_k^{\sharp_A}X_k}x,x}+\frac{1}{4}
  \sum_{\substack{1\leq k,j\leq n\\ j\neq k}}^{n}\normA{(X_k+X_j)x}^2
  \end{equation}
  for every $x\in\h$ with $\normA{x}=1$.

  Taking the supremum over all vectors $x\in\h$, $\normA{x}=1$, we deduce that
 \begin{eqnarray*}
   \normA{\sum_{k=1}^{n} X_k x}^2 &\leq& \normA{\sum_{k=1}^{n}X_k^{\sharp_A}X_k}+\frac{1}{4} \sum_{\substack{1\leq k,j\leq n\\ j\neq k}}^{n}
   \normA{X_k+X_j}^2\\
    &=& \normA{\sum_{k=1}^{n}X_k^{\sharp_A}X_k}+\frac{1}{4}\sum_{j,k=1}^{n}\normA{X_k+X_j}^2-\sum_{k=1}^{n}\normA{X_k}^2
 \end{eqnarray*}
 which is exactly the desired result (\ref{CT1}).
\end{proof}
It follows by the identity (\ref{RA3}) on noticing that
$$Re\seqA{\sum_{\substack{1\leq k,j\leq n\\ j\neq k}}^{n}\bra{X_k^{\sharp_A}X_k}x,x}\leq \frac{1}{4}
\normA{\sum_{\substack{1\leq k,j\leq n\\ j\neq k}}^{n}\bra{X_k^{\sharp_A}X_k+I}x}^2$$
for every $x\in\h$, the following result.
\begin{theorem} If $X_1,\cdots, X_n\in\b_A(\h)$, then
\begin{equation}\label{TD1}
  \normA{\sum_{k=1}^{n}X_k}^2+\sum_{k=1}^{n}\normA{X_k}^2
    \leq \normA{\sum_{k=1}^{n}X_k^{\sharp_A}X_k}+\frac{1}{4}\normA{\sum_{\substack{1\leq k,j\leq n\\ j\neq k}}^{n}X_k^{\sharp_A}X_k+I}^2.
\end{equation}
\end{theorem}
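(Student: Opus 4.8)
The proposed proof runs along the lines of the one just given for (\ref{CT1}), the only difference being the way the elementary inequality (\ref{RA2}) is invoked. First I would fix $x\in\h$ with $\normA{x}=1$ and start from the identity (\ref{RA3}), written in the form
\[
\normA{\bra{\sum_{k=1}^{n}X_k}x}^2=\seqA{\bra{\sum_{k=1}^{n}X_k^{\sharp_A}X_k}x,x}+Re\seqA{\bra{\sum_{\substack{1\leq k,j\leq n\\ j\neq k}}X_j^{\sharp_A}X_k}x,x},
\]
which separates $\normA{\bra{\sum_{k}X_k}x}^2$ into its diagonal contribution $\sum_{k=1}^{n}\normA{X_kx}^2=\seqA{\bra{\sum_{k}X_k^{\sharp_A}X_k}x,x}$ and the real part of the off-diagonal double sum.

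The crucial step is to bound the off-diagonal term \emph{globally} rather than termwise: I would apply (\ref{RA2}) once, with $a=\bra{\sum_{j\neq k}X_j^{\sharp_A}X_k}x$ and $b=x$, to obtain
\[
Re\seqA{\bra{\sum_{\substack{1\leq k,j\leq n\\ j\neq k}}X_j^{\sharp_A}X_k}x,x}\leq\frac14\normA{\bra{\sum_{\substack{1\leq k,j\leq n\\ j\neq k}}X_j^{\sharp_A}X_k+I}x}^2 .
\]
Inserting this into the identity gives, for every unit $A$-vector $x$, a pointwise inequality whose right-hand side is $\seqA{\bra{\sum_kX_k^{\sharp_A}X_k}x,x}$ plus $\tfrac14\normA{\bra{\sum_{j\neq k}X_j^{\sharp_A}X_k+I}x}^2$. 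Taking the supremum over all $x\in\h$ with $\normA{x}=1$, and using $\sup(f+g)\leq\sup f+\sup g$ together with $\sup_{\normA{x}=1}\normA{Tx}^2=\normA{T}^2$ and the fact (recorded in the Introduction) that the $A$-positive operator $\sum_kX_k^{\sharp_A}X_k$ satisfies $\normA{\sum_kX_k^{\sharp_A}X_k}=\sup_{\normA{x}=1}\seqA{\bra{\sum_kX_k^{\sharp_A}X_k}x,x}$, one arrives at the stated bound. The remaining $\sum_{k=1}^{n}\normA{X_k}^2$ on the left of (\ref{TD1}) is then to be recovered as in the proof of (\ref{CT1}): one retains the diagonal sum $\sum_k\normA{X_kx}^2$ on the left-hand side instead of absorbing it into $\seqA{(\sum_kX_k^{\sharp_A}X_k)x,x}$, and completes the index set $j\neq k$ to the full range $1\leq j,k\leq n$, whose diagonal accounts for the missing squared $A$-norms.

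The step I expect to be the main obstacle is precisely this final bookkeeping. Unlike the situation in (\ref{CT1}), where the off-diagonal expression is literally a \emph{sum} of squared norms $\normA{(X_j+X_k)x}^2$ whose diagonal entries equal $4\normA{X_kx}^2$, here the off-diagonal part sits inside a single squared $A$-seminorm $\normA{(\sum_{j\neq k}X_j^{\sharp_A}X_k+I)x}^2$, so there is no automatic per-term diagonal completion; one must reorganize the identity (\ref{RA3}) carefully so that the diagonal contribution is split off with the correct constant \emph{before} the supremum is taken, and check that the constant $\tfrac14$ and the additive $I$ survive this reorganization. A secondary technical point is that only the trivial direction $\sup(f+g)\leq\sup f+\sup g$ of the sup--sum interchange is available, so the argument must be arranged so that this suffices; this is the case here because each of the two separated pieces is either an $A$-positive quadratic form or a squared $A$-seminorm, for which the supremum is attained in exactly the required form.
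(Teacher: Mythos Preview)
Your core step is exactly the paper's: apply (\ref{RA2}) once with $a=\bra{\sum_{j\neq k}X_j^{\sharp_A}X_k}x$ and $b=x$ to bound the off-diagonal piece of (\ref{RA3}) by $\tfrac14\normA{\bra{\sum_{j\neq k}X_j^{\sharp_A}X_k+I}x}^2$, then take the supremum over unit $A$-vectors. The paper's proof records precisely this inequality and then writes ``The details are omitted,'' so on the essential idea you and the paper agree.

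The obstacle you flag in your last paragraph, however, is not merely a bookkeeping nuisance: it is fatal, because the stated inequality (\ref{TD1}) with the extra $\sum_{k=1}^{n}\normA{X_k}^2$ on the left is false. Take $A=I$, $n=2$, $X_1=X_2=I$: then the left side is $\normA{2I}^2+2\normA{I}^2=6$, while the right side is $\normA{2I}+\tfrac14\normA{2I+I}^2=2+\tfrac94=\tfrac{17}{4}$. (Already $n=1$ with $X_1=I$ gives $2\leq\tfrac54$.) Your instinct that the completion trick from (\ref{CT1}) does not transfer is therefore exactly right: in (\ref{CT1}) the off-diagonal bound is a \emph{sum} $\tfrac14\sum_{j\neq k}\normA{(X_j+X_k)x}^2$ whose diagonal extension contributes precisely $\sum_k\normA{X_kx}^2$, whereas here the bound is a single squared seminorm and no analogous diagonal completion is available. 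The argument you outline does correctly establish the inequality with the term $\sum_{k=1}^{n}\normA{X_k}^2$ removed from the left-hand side, and that is evidently what the paper's one-line proof actually yields; the extra summand on the left appears to have been carried over inadvertently from the statement of (\ref{CT1}).
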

\begin{proof}It follows by the identity (\ref{RA3}) on noticing that
$$Re\seqA{\sum_{\substack{1\leq k,j\leq n\\ j\neq k}}^{n}\bra{X_k^{\sharp_A}X_k}x,x}\leq \frac{1}{4}
\normA{\sum_{\substack{1\leq k,j\leq n\\ j\neq k}}^{n}\bra{X_k^{\sharp_A}X_k+I}x}^2$$
for every $x\in\h$. The details are omitted.
\end{proof}
\section{$A$-Numerical Radius Inequalities For
Several Operators}
This section's primary objective is to obtain a number of upper bounds for the $A$-numerical radius that are improvements on certain preexisting ones.
\begin{lemma}\label{LemmaTT1} Let $T,S\in\b_A(\h)$. Then
\begin{eqnarray}\label{TT1}
  \normA{T\Ta+S\Sa} &\leq& \max\set{\normA{T+S}^2,\normA{T-S}^2}\nonumber \\
   &-&\frac{\abs{\normA{T+S}^2-\normA{T-S}^2}}{2}.
\end{eqnarray}
In particular, letting $S=\Ta$, we have
\begin{eqnarray}\label{TT2}
  \normA{T\Ta+\Ta T}  &\leq&4\max\set{\normA{Re_{A} T}^2,\normA{Im_{A} T}^2}\nonumber \\
   &-&2\abs{\normA{Re_{A} T}^2-\normA{Im_{A} T}^2}.
\end{eqnarray}
\end{lemma}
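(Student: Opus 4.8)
The right-hand side of (\ref{TT1}) is an average in disguise: for any reals $a,b$ one has $\max\set{a,b}-\tfrac{1}{2}\abs{a-b}=\tfrac{1}{2}(a+b)$. Hence (\ref{TT1}) is equivalent to the symmetric estimate
\[
  \normA{T\Ta+S\Sa}\le \tfrac{1}{2}\bra{\normA{T+S}^2+\normA{T-S}^2},
\]
and it is this form I would prove. The plan is to combine a parallelogram-type operator identity with the norm identity (\ref{Eq.1.1}).

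First I would record that, since the map $X\mapsto X^{\sharp_A}$ is additive (so $(T\pm S)^{\sharp_A}=\Ta\pm\Sa$) and $\b_A(\h)$ is a subalgebra, expanding the two products and cancelling the cross terms $T\Sa+S\Ta$ gives
\[
  (T+S)(T+S)^{\sharp_A}+(T-S)(T-S)^{\sharp_A}=2T\Ta+2S\Sa .
\]
As $\normA{\cdot}$ is a seminorm on $\bah\supseteq\b_A(\h)$, the triangle inequality then yields
\[
  2\normA{T\Ta+S\Sa}\le\normA{(T+S)(T+S)^{\sharp_A}}+\normA{(T-S)(T-S)^{\sharp_A}} .
\]
Applying (\ref{Eq.1.1}) to $T+S$ and to $T-S$ (both of which lie in $\b_A(\h)$) turns the right-hand side into $\normA{T+S}^2+\normA{T-S}^2$, which is exactly the bound above; rewriting the average back in the form $\max-\tfrac{1}{2}\abs{\cdot}$ recovers (\ref{TT1}).

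For the ``in particular'' part I would set $S=\Ta$ in (\ref{TT1}). Then $T+\Ta=2\,Re_{A} T$ and $T-\Ta=2i\,Im_{A} T$, so $\normA{T+\Ta}^2=4\normA{Re_{A} T}^2$ and $\normA{T-\Ta}^2=4\normA{Im_{A} T}^2$, and the right-hand side becomes precisely $4\max\set{\normA{Re_{A} T}^2,\normA{Im_{A} T}^2}-2\abs{\normA{Re_{A} T}^2-\normA{Im_{A} T}^2}$. On the left-hand side $S\Sa=\Ta(\Ta)^{\sharp_A}$ with $(\Ta)^{\sharp_A}=PTP$, so a priori one only gets $\normA{T\Ta+\Ta PTP}$; but the $A$-seminorm of an operator $X$ depends only on $PXP$ (the supremum defining $\normA{\cdot}$ runs over $x\in\overline{\r(A)}$ and $A=PAP$), and $P\Ta=\Ta P=\Ta$ (both follow from $\Ta=A^{\dag}T^*A$ with $AP=A$ and $PA^{\dag}=A^{\dag}$), whence $P\bra{T\Ta+\Ta PTP}P=P\bra{T\Ta+\Ta T}P$ and therefore $\normA{T\Ta+\Ta(\Ta)^{\sharp_A}}=\normA{T\Ta+\Ta T}$. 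This yields (\ref{TT2}).

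I do not expect a genuine obstacle: once the identity $\max\set{a,b}-\tfrac{1}{2}\abs{a-b}=\tfrac{1}{2}(a+b)$ is noticed, everything reduces to the parallelogram identity and (\ref{Eq.1.1}). The only step requiring a little care is the bookkeeping with the projection $P$ in the specialization; it can be bypassed instead by observing that $T\Ta+\Ta(\Ta)^{\sharp_A}$ and $T\Ta+\Ta T$ are both sums of $A$-positive, $A$-selfadjoint operators whose $A$-quadratic forms agree on $\r(P)$, so their $A$-norms -- each the supremum of that form over $A$-unit vectors -- coincide.
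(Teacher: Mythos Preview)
Your proof is correct and follows essentially the same route as the paper's: both recognize (via $\max\{a,b\}=\tfrac{a+b}{2}+\tfrac{|a-b|}{2}$) that (\ref{TT1}) amounts to $\normA{T\Ta+S\Sa}\le\tfrac12(\normA{T+S}^2+\normA{T-S}^2)$, and both obtain this from the parallelogram identity $(T+S)(T+S)^{\sharp_A}+(T-S)(T-S)^{\sharp_A}=2(T\Ta+S\Sa)$ together with the triangle inequality and (\ref{Eq.1.1}). Your treatment of the specialization $S=\Ta$ is in fact more careful than the paper's, which asserts (\ref{TT2}) without addressing the $(\Ta)^{\sharp_A}=PTP$ issue that you resolve via $P\Ta=\Ta P=\Ta$ and $\normA{X}=\normA{PXP}$.
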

\begin{proof} We have
\begin{eqnarray*}
  &&\max\set{\normA{T+S}^2,\normA{T-S}^2}=\max\set{\normA{\Ta+\Sa}^2,\normA{\Ta-\Sa}^2}\\
  &&=\frac{\normA{\Ta+\Sa}^2+\normA{\Ta-\Sa}^2}{2}+\frac{\normA{\Ta+\Sa}^2-\normA{\Ta-\Sa}^2}{2}\\
  &&= \frac{\normA{\abs{\Ta+\Sa}^2}+\normA{\abs{\Ta-\Sa}^2}}{2}+\frac{\abs{\normA{T+S}^2-\normA{T-S}^2}}{2}\\
  &&\geq \frac{\normA{\abs{\Ta+\Sa}^2+\abs{\Ta-\Sa}^2}}{2}+\frac{\abs{\normA{T+S}^2-\normA{T-S}^2}}{2}\\
  &&=\normA{T\Ta+S\Sa}+\frac{\abs{\normA{T+S}^2-\normA{T-S}^2}}{2}.
\end{eqnarray*}
\end{proof}
\begin{theorem}\label{TheoremQA1} For any $T,S\in\b_A(\h)$,
\begin{equation}\label{QA1}
  w_A(TS+ST)\leq 2\sqrt{2}\normA{T}w_A(S).
\end{equation}
\end{theorem}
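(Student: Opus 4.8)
The plan is to bound $\abs{\seqA{(TS+ST)x,x}}$ for an arbitrary $x\in\h$ with $\normA{x}=1$ and then take the supremum. First I would split $\seqA{(TS+ST)x,x}=\seqA{TSx,x}+\seqA{STx,x}$ and apply the elementary inequality $\abs{z+w}^{2}\le 2(\abs z^{2}+\abs w^{2})$ with $z=\seqA{TSx,x}$, $w=\seqA{STx,x}$, which gives
\[
\abs{\seqA{(TS+ST)x,x}}^{2}\le 2\bra{\abs{\seqA{TSx,x}}^{2}+\abs{\seqA{STx,x}}^{2}}.
\]
So the whole statement reduces to the pointwise estimate $\abs{\seqA{TSx,x}}^{2}+\abs{\seqA{STx,x}}^{2}\le 4\normA{T}^{2}w_A(S)^{2}$; taking square roots and the supremum then produces exactly $w_A(TS+ST)\le\sqrt2\cdot 2\normA{T}w_A(S)=2\sqrt2\,\normA{T}w_A(S)$, and the case of equality in $\abs{z+w}^2\le 2(\abs z^2+\abs w^2)$ is consistent with the inequality \eqref{QA1} being sharp.

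For the reduced estimate I would first rewrite the two scalars through the $A$-adjoint: $\seqA{TSx,x}=\seqA{Sx,\Ta x}$ and $\seqA{STx,x}=\seqA{S(Tx),x}$, and record the structural fact that makes $\normA{T}$ (rather than two separate norms) appear, namely
\[
\normA{Tx}^{2}+\normA{\Ta x}^{2}=\seqA{(\Ta T+T\Ta)x,x}\le\normA{\Ta T+T\Ta}\le 2\normA{T}^{2},
\]
where the first equality is the direct computation using $AT^{\sharp_A}=T^{*}A$, the middle inequality uses that $\Ta T+T\Ta$ is $A$-positive, and the last uses $\normA{\Ta T}=\normA{T\Ta}=\normA{T}^{2}$ from \eqref{Eq.1.1}. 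Writing $u=Tx$, $v=\Ta x$, one must then bound $\abs{\seqA{Sx,v}}^{2}+\abs{\seqA{Su,x}}^{2}$ jointly by $2(\normA u^{2}+\normA v^{2})\,w_A(S)^{2}$, using the standard polarisation estimate $\abs{\seqA{Sy,z}}\le 2\normA{y}\normA{z}\,w_A(S)$ (obtained by expanding over the fourth roots of unity and rescaling, together with $w_A(S)=w_A(S^{\sharp_A})$) \emph{in a sharpened, non-separated form}; a convenient device for this is Lemma~\ref{LemmaTT1} applied to $\Xi=(ST)^{\sharp_A}$ and $\Upsilon=TS$, so that $\Xi\Xi^{\sharp_A}+\Upsilon\Upsilon^{\sharp_A}=(ST)^{\sharp_A}(ST)+(TS)(TS)^{\sharp_A}$ and the cross terms get controlled by $\normA{T\Ta+\Ta T}$ again.

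The main obstacle is precisely this last joint estimate in Step~2: if one bounds $\abs{\seqA{Sx,v}}$ and $\abs{\seqA{Su,x}}$ \emph{separately} by $2\normA v\,w_A(S)$ and $2\normA u\,w_A(S)$ and adds the squares, one only gets $4(\normA u^{2}+\normA v^{2})w_A(S)^{2}\le 8\normA{T}^{2}w_A(S)^{2}$, i.e. the constant $4$ rather than $2\sqrt2$. The extra factor $\sqrt2$ forces one to use that the two inner products $\seqA{Sx,\Ta x}$ and $\seqA{S(Tx),x}$ cannot both be simultaneously extremal — a correlation that is already visible on the $3\times3$ weighted‑shift pair which makes \eqref{QA1} an equality — and this is where the identity $\normA{Tx}^{2}+\normA{\Ta x}^{2}=\seqA{(\Ta T+T\Ta)x,x}$ must be combined with Lemma~\ref{LemmaTT1} rather than with the triangle inequality. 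Once that estimate is in place, the remaining manipulations (square root, supremum over $A$-unit vectors) are routine.
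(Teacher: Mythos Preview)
Your proposal has a genuine gap exactly where you flag the main obstacle. The intermediate inequality you require,
\[
\abs{\seqA{Sx,v}}^{2}+\abs{\seqA{Su,x}}^{2}\le 2\bra{\normA{u}^{2}+\normA{v}^{2}}w_A(S)^{2}\qquad(u=Tx,\ v=\Ta x,\ \normA{x}=1),
\]
is simply false. Take $A=I$, $x=e_1\in\c^{2}$, $T=\begin{pmatrix}0&0\\1&0\end{pmatrix}$ and $S=\begin{pmatrix}0&2\\0&0\end{pmatrix}$; then $u=Te_1=e_2$, $v=T^{*}e_1=0$, $w(S)=1$, and the left side equals $\abs{\seq{Se_1,0}}^{2}+\abs{\seq{Se_2,e_1}}^{2}=0+4=4$ while the right side is $2(1+0)\cdot 1=2$. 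The only constraint that ``$u=Tx$, $v=\Ta x$'' imposes on the pair $(u,v)$ is $\seqA{u,x}=\overline{\seqA{v,x}}$, far too weak to force a factor-of-two improvement over the polarisation bound. Your appeal to Lemma~\ref{LemmaTT1} does not repair this: that lemma bounds $\normA{\Xi\Xi^{\sharp_A}+\Upsilon\Upsilon^{\sharp_A}}$ by combinations of $\normA{\Xi\pm\Upsilon}^{2}$, and neither side of that relation is the sum of squares of \emph{diagonal} inner products you must control; no operator-norm manipulation can certify an inequality that already fails at the level of a single unit vector. (In the same example your \emph{combined} target $\abs{\seqA{TSx,x}}^{2}+\abs{\seqA{STx,x}}^{2}\le 4\normA{T}^{2}w_A(S)^{2}$ happens to hold with equality, so that stronger pointwise statement might be salvageable --- but not via your two-step route through the displayed inequality.)

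For comparison, the paper's argument works with the roles of $T$ and $S$ interchanged (harmless, since $TS+ST$ is symmetric in the pair) and concentrates all the nontrivial work into the pointwise bound $\normA{Tx}^{2}+\normA{\Ta x}^{2}\le 4\,w_A(T)^{2}$, obtained by integrating $\mathrm{Re}\,\seqA{e^{i\theta}Tg(\theta),g(\theta)}\le w_A(T)\,\normA{g(\theta)}^{2}$ over $\theta\in[0,2\pi]$ with $g(\theta)=\tfrac12 e^{i\theta}Tx+\tfrac12 e^{-i\theta}\Ta x+x$. After that, the ordinary triangle inequality and Cauchy--Schwarz give $\abs{\seqA{(TS+ST)x,x}}\le\normA{S}\bra{\normA{Tx}+\normA{\Ta x}}\le 2\sqrt{2}\,\normA{S}\,w_A(T)$ directly. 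Thus the saving of $\sqrt2$ comes from comparing $\normA{Tx}^{2}+\normA{\Ta x}^{2}$ with $4w_A(T)^{2}$ rather than with your $2\normA{T}^{2}$, not from any refined joint polarisation on the $S$ side.
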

\begin{proof} First note that $w_A(T)\leq 1$ and $\normA{x}=1$ we have
\begin{equation}\label{QA2}
  \normA{Tx}^2+\normA{\Ta x}^2\leq 4.
\end{equation}
The inequality (\ref{QA2})may be obtained by noting that
\begin{equation}\label{QA3}
  Re\seqA{e^{i\theta}Tg(\theta),g(\theta)}\leq \normA{g(\theta)}^2,
\end{equation}
where $g(\theta)=\frac{1}{2}e^{i\theta}Tx+\frac{1}{2}e^{-i\theta}\Ta x+x$, and integrating (\ref{QA3}) over $[0,2\pi]$ to obtain
$$\frac{1}{2}\normA{Tx}^2+\frac{1}{2}\normA{\Ta x}^2\leq \frac{1}{4}\normA{Tx}^2+\frac{1}{4}\normA{\Ta x}^2+\normA{x}^2. $$
From (\ref{QA2}) it follows that  $\normA{Tx}+\normA{\Ta x}\leq 2 \sqrt{2}$
so that when $\normA{S}\leq 1,w_A(T)\leq 1$ and $\normA{x}\leq 1$ we have
\begin{eqnarray*}
  \abs{\seqA{(TS+ST)x,x}} &\leq&\normA{Tx}\normA{\Sa}+\normA{\Ta x}\normA{S} \\
   &\leq&\normA{T x}+\normA{\Ta x}\leq 2\sqrt{2}.
\end{eqnarray*}
\end{proof}
The following example illustrates Theorem \ref{TheoremQA1}.
\begin{example} Let $T=\begin{bmatrix}1& 0 \\1 & 0 \\\end{bmatrix}$, $S=\begin{bmatrix}1& 1 \\0 & 0 \\\end{bmatrix}$
and $A=\begin{bmatrix}1& -1 \\-1 & 2 \\\end{bmatrix}$. Then elementary calculations show that
$w_A(TS+ST)=\frac{1+\sqrt{10}}{2}\approx 2.08$,$\normA{T}=1$ and $w_A(S)=1$. Hence
$$2.08\approx w_A(TS+ST)\leq 2\sqrt{2}\normA{T}w_A(S)\approx 2\sqrt{2}=2.828.$$
\end{example}
Our refinement of the inequality (\ref{QA2}) can be stated as follows.
\begin{lemma}\label{LemmaTT2} Let $T\in\b_A(\h)$ such that $w_A(T)\leq 1$, and let $x\in\h$ with $\normA{x}=1$. Then
\begin{equation}\label{QA5}
  \normA{Tx}^2+\normA{\Ta x}^2\leq 4\bra{1-\frac{\abs{\normA{Re_{A} T}^2-\normA{Im_{A} T}^2}}{2}}.
\end{equation}
\end{lemma}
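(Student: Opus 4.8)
The plan is to read (QA5) as the strengthening of (QA2) obtained by feeding the refined estimate (TT2) of Lemma~\ref{LemmaTT1} into the argument in place of the crude bound $\normA{T\Ta+\Ta T}\le 4$ that was used in the proof of Theorem~\ref{TheoremQA1}. First I would collapse the left-hand side into a single sesquilinear form. Since $T\in\b_A(\h)$, the operators $\Ta T$ and $T\Ta$ lie in $\b_A(\h)$ and are $A$-positive, and for every $x\in\h$ one has $\normA{Tx}^2=\seqA{\Ta Tx,x}$ (because $\seqA{\Ta Tx,x}=\overline{\seqA{Tx,Tx}}$ by the defining identity of the $A$-adjoint) and $\normA{\Ta x}^2=\seqA{T\Ta x,x}$ (because $\seqA{T\Ta x,x}=\seqA{\Ta x,\Ta x}$, applying that identity with $y=\Ta x$). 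Hence
\[
\normA{Tx}^2+\normA{\Ta x}^2=\seqA{(\Ta T+T\Ta)x,x}.
\]
As $\Ta T+T\Ta$ is $A$-positive, for a unit vector $x$ this quantity is at most $\normA{\Ta T+T\Ta}=\normA{T\Ta+\Ta T}$.

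Next I would apply (TT2), which yields
\[
\normA{Tx}^2+\normA{\Ta x}^2\le 4\max\set{\normA{Re_A T}^2,\normA{Im_A T}^2}-2\abs{\normA{Re_A T}^2-\normA{Im_A T}^2},
\]
so it only remains to show that the hypothesis $w_A(T)\le 1$ forces $\max\set{\normA{Re_A T}^2,\normA{Im_A T}^2}\le 1$; this replaces the leading factor by $4$ and gives (QA5) verbatim. To see this, note that $Re_A T$ and $Im_A T$ are $A$-selfadjoint, so each has $A$-norm equal to its $A$-numerical radius; combining this with the identities $\seqA{Re_A Tx,x}=Re\seqA{Tx,x}$ and $\seqA{Im_A Tx,x}=Im\seqA{Tx,x}$ and taking the supremum over unit vectors gives $\normA{Re_A T}=w_A(Re_A T)\le w_A(T)\le 1$ and, symmetrically, $\normA{Im_A T}\le 1$. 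Chaining the three displays finishes the proof.

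The step I expect to require the most care is precisely this last point: the inequality genuinely improves on (QA2) only if both $\normA{Re_A T}$ and $\normA{Im_A T}$ are $\le 1$, and that rests on the standard fact that $\normA{S}=w_A(S)$ for $A$-selfadjoint $S$, which is what transfers the numerical-radius hypothesis on $T$ to a norm bound on its $A$-Cartesian components. Everything else is routine bookkeeping — the reduction of $\normA{Tx}^2+\normA{\Ta x}^2$ to a quadratic form in $\Ta T+T\Ta$ and a single appeal to Lemma~\ref{LemmaTT1} — and the whole argument runs in parallel to the proof of Theorem~\ref{TheoremQA1}, with the sharper ingredient (TT2) entering exactly where the estimate $\normA{Tx}^2+\normA{\Ta x}^2\le 4$ was used there.
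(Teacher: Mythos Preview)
Your proposal is correct and follows essentially the same route as the paper: reduce $\normA{Tx}^2+\normA{\Ta x}^2$ to $\seqA{(T\Ta+\Ta T)x,x}\le\normA{T\Ta+\Ta T}$, apply the bound (TT2) from Lemma~\ref{LemmaTT1}, and then use $\normA{Re_A T}=w_A(Re_A T)\le w_A(T)\le 1$ (and likewise for $Im_A T$) to cap the leading term by $4$. The paper's proof is identical in structure, only recording the intermediate inequality $4\max\{w_A^2(Re_A T),w_A^2(Im_A T)\}\le 4w_A^2(T)$ explicitly before invoking $w_A(T)\le 1$.
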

\begin{proof} It follows from the inequality (\ref{TT2}) that
\begin{eqnarray*}
 \normA{Tx}^2+\normA{\Ta x}^2 &=& \abs{\seqA{\bra{T\Ta+\Ta T}x,x}}\leq \normA{T\Ta+\Ta T} \\
   &=&4\max\set{\normA{Re_{A} T}^2,\normA{Im_{A} T}^2}-2\abs{\normA{Re_{A} T}^2-\normA{Im_{A} T}^2}\,\bra{\mbox{by Lemma \ref{LemmaTT1}}}\\
   &=&4\max\set{w_A^2(Re_{A} T),w_A^2(Im_{A} T)}-2\abs{\normA{Re_{A} T}^2-\normA{Im_{A} T}^2}\\
   &\leq& 4w_A^2(T)-2\abs{\normA{Re_{A} T}^2-\normA{Im_{A} T}^2}\\
   &\leq&4\bra{1-\frac{\abs{\normA{Re_{A} T}^2-\normA{Im_{A} T}^2}}{2}}.
\end{eqnarray*}
\end{proof}

The following example illustrates Lemma \ref{LemmaTT2}.
\begin{example} Let $T=\frac{1}{2}\begin{bmatrix}1& 0 \\1 & 1 \\\end{bmatrix}$, $A=\begin{bmatrix}1& -1 \\-1 & 2 \\\end{bmatrix}$
and $x=\frac{1}{2}\begin{bmatrix} 2-\sqrt{3} \\ 1-\sqrt{3} \\\end{bmatrix}$. Then elementary calculations show that
$w_A(T)=1$, $\normA{x}=1$, $\normA{Tx}^2\approx 0.5559$, $\normA{\Ta x}^2\approx 1.0959$, $\normA{Re_{A} T}^2=1$, $\normA{Im_{A} T}^2=0.5$.

Consequently,
$$1.6518\approx \normA{Tx}^2+\normA{\Ta x}^2\leq 4\bra{1-\frac{\abs{\normA{Re_{A} T}^2-\normA{Im_{A} T}^2}}{2}}\approx 3.$$
\end{example}
Based on Lemma \ref{LemmaTT2}, we have the following general numerical radius inequality.
\begin{theorem}\label{TheoremGN1} Let $T,S X,Y\in\b_A(\h)$. Then
\begin{eqnarray*}
  w_A(TXS\pm SYT) &=&2\sqrt{2}\normA{S}\max\set{\normA{X},\normA{Y}}\sqrt{w_A^2(T)-\frac{\abs{\normA{Re_{A} T}^2-\normA{Im_{A} T}^2}}{2}}.
\end{eqnarray*}
\end{theorem}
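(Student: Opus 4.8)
The plan is to reduce the general expression $TXS \pm SYT$ to the special form $T'S' + S'T'$ handled in Theorem~\ref{TheoremQA1}, and then apply the sharpened bound \eqref{QA5} from Lemma~\ref{LemmaTT2} in place of the cruder estimate \eqref{QA2}. First I would fix $x\in\h$ with $\normA{x}=1$ and expand
\begin{equation*}
  \abs{\seqA{(TXS \pm SYT)x,x}} \leq \abs{\seqA{XSx, \Ta x}} + \abs{\seqA{YTx, \Sa x}},
\end{equation*}
using $AS^{\sharp_A} = S^*A$ and $AT^{\sharp_A} = T^*A$ to move $T$ and $S$ across the semi-inner product. Bounding each factor by Cauchy--Schwarz in $\seqA{\cdot,\cdot}$ and using $\normA{X},\normA{Y}\leq \max\{\normA{X},\normA{Y}\}$ together with $\normA{Sx}\leq\normA{S}$, $\normA{\Sa x}\leq\normA{S}$ gives
\begin{equation*}
  \abs{\seqA{(TXS \pm SYT)x,x}} \leq \max\set{\normA{X},\normA{Y}}\,\normA{S}\,\bra{\normA{Tx} + \normA{\Ta x}}.
\end{equation*}

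Next, after normalizing $T$ so that $w_A(T)\leq 1$ (the general case follows by homogeneity, since $w_A$, $\normA{\cdot}$, $\ReA$, $\ImA$ all scale linearly in $T$), I would apply Lemma~\ref{LemmaTT2}: from $\normA{Tx}^2 + \normA{\Ta x}^2 \leq 4\bra{1 - \tfrac{1}{2}\abs{\normA{\ReA T}^2 - \normA{\ImA T}^2}}$ and the elementary inequality $a + b \leq \sqrt{2}\sqrt{a^2+b^2}$ for $a,b\geq 0$, one obtains
\begin{equation*}
  \normA{Tx} + \normA{\Ta x} \leq 2\sqrt{2}\sqrt{1 - \tfrac{1}{2}\abs{\normA{\ReA T}^2 - \normA{\ImA T}^2}}.
\end{equation*}
Substituting back and taking the supremum over all unit vectors $x$ yields the claimed bound in the normalized case; undoing the normalization (replacing $T$ by $T/w_A(T)$ and clearing denominators, noting $\ReA(\lambda T) = \lambda\ReA T$ etc. for real $\lambda$) produces the stated inequality with $w_A^2(T)$ appearing inside the square root and the correction term scaled accordingly.

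The main obstacle I anticipate is bookkeeping with the $\sharp_A$-adjoint when moving operators across $\seqA{\cdot,\cdot}$: one must be careful that $\seqA{TXSx, x} = \seqA{XSx, \Ta x}$ genuinely holds (it does, since $T\in\b_A(\h)$ and $AT^{\sharp_A} = T^*A$), and similarly that the $\pm$ sign causes no difficulty—it is simply absorbed by the triangle inequality before any adjoint manipulation, so both signs give the same bound. A secondary point worth checking is that the normalization is legitimate: if $w_A(T) = 0$ then $T = 0$ (as $w_A$ is a norm on $\b_A(\h)$ by \eqref{equiv-Num-norm}) and both sides vanish, so we may assume $w_A(T) > 0$. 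I would also remark that the stated result should presumably read with $\leq$ rather than $=$; the inequality is what the argument delivers, and equality is not expected in general.
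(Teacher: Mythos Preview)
Your proposal is correct and follows essentially the same route as the paper: expand $\seqA{(TXS\pm SYT)x,x}$ using the $A$-adjoint, apply Cauchy--Schwarz, bound $\normA{Tx}+\normA{\Ta x}$ via $\sqrt{2}\sqrt{\normA{Tx}^2+\normA{\Ta x}^2}$ and Lemma~\ref{LemmaTT2}, then normalize. The only organizational difference is that the paper first proves the bound for $TX\pm YT$ and then substitutes $X\mapsto XS$, $Y\mapsto SY$ at the end, whereas you handle $S$ directly from the start; this is cosmetic. One small inaccuracy: $w_A(T)=0$ does not force $T=0$ in general (only $\normA{T}=0$, i.e.\ $ATA=0$), but your conclusion that both sides vanish is still correct, and your observation that the statement should read $\leq$ rather than $=$ is apt.
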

\begin{proof} First, suppose that $w_A(T)\leq 1$,$\normA{X}\leq 1, \normA{Y}\leq 1$, and let $x\in\h$ with $\normA{x}=1$. Then
\begin{eqnarray*}
  \abs{\seqA{\bra{TX\pm YT}x,x}} &=&\abs{\seqA{Xx,\Ta x}+\seqA{Tx,\Ya x}} \\
   &\leq&\normA{X x}\normA{\Ta x}+\normA{Tx}\normA{\Ya x}\\
   &\leq & \normA{X}\normA{\Ta x}+\normA{Tx}\normA{\Ya}\\
   &\leq & \normA{\Ta x}+\normA{Tx}\\
   &\leq& \sqrt{2}\bra{\normA{\Ta x}^2+\normA{Tx}^2}^{1/2}\\
   &\leq&2\sqrt{2}\sqrt{1-\frac{\abs{\normA{Re_{A} T}^2-\normA{Im_{A} T}^2}}{2}}\,\,\bra{\mbox{by Lemma \ref{LemmaTT2}}},
\end{eqnarray*}
and so
\begin{eqnarray}\label{GN1}
  w_A(TX\pm YT) &=&\sup_{\normA{x}=1}\abs{\seqA{\bra{TX\pm YT}x,x}}\nonumber  \\
   &\leq&2\sqrt{2}\sqrt{1-\frac{\abs{\normA{Re_{A} T}^2-\normA{Im_{A} T}^2}}{2}}.
\end{eqnarray}
For the general case, let $T$, $X$, and $Y$ be any operators in $\b_A(\h)$. It is clear
that the result is trivial if $w_A(T)=0$ or $\max\set{\normA{X},\normA{Y}}=0$, so suppose that
$w_A(T)\neq 0$ and $\max\set{\normA{X},\normA{Y}}\neq 0$. In the inequality (\ref{GN1}), replacing the
operators $T$, $X$, and $Y$ by the operators $\frac{T}{w_A(T)}$, $\frac{X}{\max\set{\normA{X},\normA{Y}}}$
and $\frac{Y}{\max\set{\normA{X},\normA{Y}}}$,
respectively, we have
\begin{eqnarray}\label{GN2}
  w_A(TX\pm YT) &\leq& 2\sqrt{2}max\set{\normA{X},\normA{Y}}w_A(T)\sqrt{1-\frac{\abs{\normA{Re_{A} \frac{T}{w_A(T)}}^2-\normA{Im_{A} \frac{T}{w_A(T)}}^2}}{2}} \nonumber\\
   &=& 2\sqrt{2}max\set{\normA{X},\normA{Y}}\sqrt{w_A^2(T)-\frac{\abs{\normA{Re_{A} T}^2-\normA{Im_{A} T}^2}}{2}}.
\end{eqnarray}
Now, in the inequality (\ref{GN2}), replacing the operators $X$ and $Y$ by $XS$ and $SY$,
respectively, we have
\begin{eqnarray*}
  w_A(TXS\pm SYT) &\leq&  2\sqrt{2}max\set{\normA{XS},\normA{SY}}\sqrt{w_A^2(T)-\frac{\abs{\normA{Re_{A} T}^2-\normA{Im_{A} T}^2}}{2}}\\
  &\leq& 2\sqrt{2}\normA{S}max\set{\normA{X},\normA{Y}}\sqrt{w_A^2(T)-\frac{\abs{\normA{Re_{A} T}^2-\normA{Im_{A} T}^2}}{2}},
\end{eqnarray*}
as required.
\end{proof}
An application of Theorem \ref{TheoremGN1} can be seen as follows. This result contains
a refinement of the inequality (\ref{QA1}).
\begin{corollary}\label{Corollary3.7} Let $T,S\in\b_A(\h)$. Then
  \begin{equation}\label{MM1}
    w_A(TS\pm ST)\leq 2\sqrt{2}\normA{S}\sqrt{w_A^2(T)-\frac{\abs{\normA{Re_{A} T}^2-\normA{Im_{A} T}^2}}{2}}
  \end{equation}
  and
  \begin{equation}\label{MM2}
    w_A(T^2)\leq \sqrt{2}\normA{T}\sqrt{w_A^2(T)-\frac{\abs{\normA{Re_{A} T}^2-\normA{Im_{A} T}^2}}{2}}
  \end{equation}
\end{corollary}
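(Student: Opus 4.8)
The plan is to read off both inequalities as immediate specializations of Theorem~\ref{TheoremGN1}, which already carries out the substantive estimate (the refinement of $\normA{Tx}^2+\normA{\Ta x}^2$ supplied by Lemma~\ref{LemmaTT2}); no new analysis is needed.

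For inequality~(\ref{MM1}) I would apply Theorem~\ref{TheoremGN1} with $X=Y=I$. This is legitimate since $I\in\b_A(\h)$ (indeed $I^{\sharp_A}=P$) and $\normA{I}=\sup_{\normA{x}=1}\normA{Ix}=1$. With this choice one has $TXS\pm SYT=TS\pm ST$ and $\max\set{\normA{X},\normA{Y}}=1$, so Theorem~\ref{TheoremGN1} gives
\[
w_A(TS\pm ST)\leq 2\sqrt{2}\,\normA{S}\sqrt{w_A^2(T)-\frac{\abs{\normA{Re_{A} T}^2-\normA{Im_{A} T}^2}}{2}},
\]
which is exactly~(\ref{MM1}). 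No separate handling of the degenerate cases $w_A(T)=0$ or $S=0$ is required, as these are already absorbed inside the proof of Theorem~\ref{TheoremGN1}.

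For inequality~(\ref{MM2}) I would then specialize~(\ref{MM1}) by taking $S=T$ with the plus sign. Since $TS+ST=2T^2$ in that case and $w_A(\cdot)$ is (positively) homogeneous, this yields $2\,w_A(T^2)=w_A(2T^2)\leq 2\sqrt{2}\,\normA{T}\sqrt{w_A^2(T)-\frac{\abs{\normA{Re_{A} T}^2-\normA{Im_{A} T}^2}}{2}}$, and dividing by $2$ gives~(\ref{MM2}). Here $T^2\in\b_A(\h)$ because $\b_A(\h)$ is a subalgebra, so $w_A(T^2)$ is well defined.

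There is essentially no obstacle: the corollary is a repackaging of Theorem~\ref{TheoremGN1}. The only points that deserve a line of justification are that $I$ is an admissible substitution for $X$ and $Y$ with $\normA{I}=1$, and the trivial identity $TS+ST=2T^2$ at $S=T$ together with the homogeneity of $w_A$. As a closing remark one may observe that~(\ref{MM1}) genuinely refines~(\ref{QA1}): because $TS+ST$ is symmetric in $T$ and $S$, inequality~(\ref{QA1}) may be rewritten as $w_A(TS+ST)\leq 2\sqrt{2}\normA{S}w_A(T)$, and since $\sqrt{w_A^2(T)-c}\leq w_A(T)$ for every $c\geq 0$, the bound~(\ref{MM1}) with the plus sign is the sharper one.
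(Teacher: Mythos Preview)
Your proposal is correct and follows essentially the same route as the paper: the paper's proof also specializes Theorem~\ref{TheoremGN1} with $X=Y=I$ to obtain~(\ref{MM1}) and then sets $S=T$ in~(\ref{MM1}) to get~(\ref{MM2}). Your additional remarks on $\normA{I}=1$, the homogeneity of $w_A$, and the comparison with~(\ref{QA1}) are accurate elaborations but not needed for the argument itself.
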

\begin{proof} Inequality \eqref{MM1} follows directly from Theorem~\ref{TheoremGN1} by setting $X = Y = I$. To obtain inequality \eqref{MM2}, it suffices to let $T = S$ in \eqref{MM1}.
\end{proof}
\begin{theorem} If $T_1,\cdots,T_n\in\b_A(\h)$, then
\begin{equation}\label{ST1}
  \normA{\sum_{k=1}^{n}T_k}^2\leq 4w_A\bra{\sum_{k=1}^{n}T_k}\sum_{j=1}^{n}w_A(T_j).
\end{equation}
Moreover, if $\sum_{k=1}^{n}T_k$ commutes with each $\Ta_j$ for $j\in\set{1,\cdots,n}$, then
\begin{equation}\label{ST2}
  \normA{\sum_{k=1}^{n}T_k}^2\leq 2w_A\bra{\sum_{k=1}^{n}T_k}\sum_{j=1}^{n}w_A(T_j).
\end{equation}
\end{theorem}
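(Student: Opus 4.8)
The plan is to collapse the whole sum into the single operator $S:=\sum_{k=1}^{n}T_k$, which belongs to $\b_A(\h)$ because $\b_A(\h)$ is a subalgebra, and then to trade the $A$-norm we want to bound for an $A$-numerical radius. Concretely, by \eqref{Eq.1.1} we have $\normA{S}^{2}=\normA{S S^{\sharp_A}}$, and $S S^{\sharp_A}$ is $A$-selfadjoint and $A$-positive (as recalled in the introduction for every operator of the form $T T^{\sharp_A}$). Hence $\seqA{S S^{\sharp_A}x,x}\geq 0$ for each $x$, and the formula $\normA{R}=\sup_{\normA{x}=1}\seqA{Rx,x}$, valid for $A$-positive $R$, shows that $\normA{S S^{\sharp_A}}=w_A(S S^{\sharp_A})$. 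Thus $\normA{S}^{2}=w_A(S S^{\sharp_A})$, and the problem is now phrased in exactly the language the product estimates \eqref{product-1}--\eqref{product-2} are built for.

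Next I would expand $S S^{\sharp_A}$. Since $\sharp_A$ is additive (it is given by $T^{\sharp_A}=A^{\dag}T^{*}A$), we have $S^{\sharp_A}=\sum_{k=1}^{n}T_k^{\sharp_A}$, so $S S^{\sharp_A}=\sum_{k=1}^{n}S\,T_k^{\sharp_A}$, and subadditivity of $w_A$ gives $\normA{S}^{2}\leq\sum_{k=1}^{n}w_A\bra{S\,T_k^{\sharp_A}}$. For \eqref{ST1} I apply the product bound \eqref{product-1} termwise, $w_A\bra{S\,T_k^{\sharp_A}}\leq 4\,w_A(S)\,w_A\bra{T_k^{\sharp_A}}$, and then use $w_A\bra{T_k^{\sharp_A}}=w_A(T_k)$; this last identity is immediate from $A T_k^{\sharp_A}=T_k^{*}A$, which yields $\seqA{T_k^{\sharp_A}x,x}=\overline{\seqA{T_kx,x}}$ and hence equal moduli. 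Summing over $k$ produces $\normA{\sum_k T_k}^{2}\leq 4\,w_A\bra{\sum_k T_k}\sum_{j=1}^{n}w_A(T_j)$, which is \eqref{ST1}.

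For the refinement \eqref{ST2}, note that the extra hypothesis says exactly that $S$ commutes with every $T_k^{\sharp_A}$, i.e. with every second factor in the decomposition $S S^{\sharp_A}=\sum_k S\,T_k^{\sharp_A}$. Each summand is therefore a product of two commuting operators from $\b_A(\h)$, so the commuting product bound \eqref{product-2} applies and gives $w_A\bra{S\,T_k^{\sharp_A}}\leq 2\,w_A(S)\,w_A\bra{T_k^{\sharp_A}}=2\,w_A(S)\,w_A(T_k)$. Summing over $k$ yields \eqref{ST2}.

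I do not expect a genuine obstacle: once the reformulation $\normA{S}^2=w_A(SS^{\sharp_A})$ is in place the proof is a short assembly of facts already established in the excerpt. The points that need attention are (i) using the factorization $S S^{\sharp_A}$ rather than $S^{\sharp_A}S$, so that the commutativity assumption ``$S$ commutes with each $T_k^{\sharp_A}$'' is used verbatim in the second part; (ii) invoking the exact identity $\normA{R}=w_A(R)$ for $A$-positive $R$ --- using only $\normA{R}\leq 2w_A(R)$ from \eqref{equiv-Num-norm} would weaken the constants $4$ and $2$ to $8$ and $4$; and (iii) recording $w_A(T^{\sharp_A})=w_A(T)$. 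The subadditivity of $w_A$ and the additivity of $\sharp_A$ are standard and would be used without further comment.
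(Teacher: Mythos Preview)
Your proposal is correct and is essentially the paper's own proof: both establish the intermediate inequality $\normA{S}^{2}\leq\sum_{j}w_A(\text{product of }T_j^{\sharp_A}\text{ and }S)$ and then apply the product bounds \eqref{product-1}--\eqref{product-2} together with $w_A(T_j^{\sharp_A})=w_A(T_j)$. The only cosmetic difference is that the paper reaches the intermediate inequality by a pointwise computation $\normA{Sx}^2=\sum_j\seqA{T_j^{\sharp_A}Sx,x}$ and then takes the supremum, whereas you package the same step as the operator identity $\normA{S}^2=w_A(SS^{\sharp_A})$ for the $A$-positive operator $SS^{\sharp_A}$ before expanding and using subadditivity of $w_A$.
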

\begin{proof} For any $x\in\h$ we have
\begin{eqnarray}\label{XY1}
 \normA{\sum_{k=1}^{n}T_k}^2 &=&\abs{\sum_{j=1}^{n}\seqA{\bra{\sum_{k=1}^{n}T_k}x,T_jx}}\\
  &\leq&\sum_{j=1}^{n}\abs{\seqA{\bra{\sum_{k=1}^{n}T_k}x,T_jx}}= \sum_{j=1}^{n}\abs{\seqA{\Ta_j\bra{\sum_{k=1}^{n}T_k}x,x}}.\nonumber
\end{eqnarray}
Taking the supremum over $x\in\h$, $\normA{x}=1$ in (\ref{XY1}), we get
\begin{equation}\label{XY2}
 \normA{\sum_{k=1}^{n}T_k}^2\leq \sum_{j=1}^{n} w_A\bra{\Ta_j\bra{\sum_{k=1}^{n}T_k}},
\end{equation}
which is an inequality of interest in itself.

Now, since, by (\ref{product-1}) applied for $\Ta_j$ $(j=1,\cdots, n)$ and $\sum_{k=1}^{n}T_k$
we can state that
\begin{equation}\label{XY3}
  w_A\bra{\Ta_j\bra{\sum_{k=1}^{n}T_k}}\leq 4w_A(T_j)w_A\bra{\sum_{k=1}^{n}T_k}
\end{equation}
for any $j\in\set{1,2,\cdots,n}$, then by (\ref{XY2}) and (\ref{XY3})we deduce (\ref{ST1}).

Now, if $\Ta_j$ $(j=1,\cdots,n)$ commutes with $\sum_{k=1}^{n}T_k$, then on utilising the
inequality (\ref{product-2}) we can state that:
\begin{equation}\label{XY4}
  w_A\bra{\Ta_j\bra{\sum_{k=1}^{n}T_k}}\leq 2w_A(T_j)w_A\bra{\sum_{k=1}^{n}T_k}
\end{equation}
for any $j\in\set{1,2,\cdots,n}$, which, by (\ref{XY2}) will imply the desired inequality (\ref{ST2}).
\end{proof}
The following particular case may be of interest.
\begin{corollary} If $T_1,\cdots,T_n\in\b_A(\h)$ are $A$-normal and $\Ta_j T_k=T_k\Ta_j$ for
$j,k\in\set{1,2,\cdots,n}$, $k\neq j$, then (\ref{ST2}) holds true.
\end{corollary}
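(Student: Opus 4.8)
The plan is to reduce this corollary directly to inequality~(\ref{ST2}) of the preceding theorem, whose sole hypothesis is that $\sum_{k=1}^{n}T_k$ commutes with each $\Ta_j$. Thus the whole task is to verify that the two assumptions of the corollary---$A$-normality of every $T_j$ together with the off-diagonal commutation relations $\Ta_j T_k=T_k\Ta_j$ for $k\neq j$---force precisely this commutation.

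First I would record what $A$-normality supplies: by definition, $T_j\in\b_A(\h)$ being $A$-normal means $\Ta_j T_j=T_j\Ta_j$. Combining this diagonal identity (the case $k=j$) with the hypothesis $\Ta_j T_k=T_k\Ta_j$ for all $k\neq j$, I obtain $\Ta_j T_k=T_k\Ta_j$ for every pair $j,k\in\set{1,\dots,n}$.

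Next I would sum over $k$. From $\Ta_j T_k=T_k\Ta_j$ for each $k$ it follows that
\[
  \Ta_j\bra{\sum_{k=1}^{n}T_k}=\sum_{k=1}^{n}\Ta_j T_k=\sum_{k=1}^{n}T_k\Ta_j=\bra{\sum_{k=1}^{n}T_k}\Ta_j,
\]
so $\sum_{k=1}^{n}T_k$ commutes with $\Ta_j$ for each $j$. This is exactly the standing hypothesis under which (\ref{ST2}) was established, and invoking that inequality completes the argument.

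I do not expect a genuine obstacle here; the proof is a short bookkeeping verification. The only point deserving a word of care is making explicit that it is precisely $A$-normality that fills in the ``diagonal'' commutators $\Ta_j T_j=T_j\Ta_j$ not covered by the assumption $k\neq j$---once those are in hand, the commutation of each $\Ta_j$ with the full sum $\sum_{k=1}^{n}T_k$, and hence (\ref{ST2}), is immediate.
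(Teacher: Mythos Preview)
Your proof is correct and is precisely the verification the paper has in mind: the corollary is stated without proof, and the intended argument is exactly the one you give---use $A$-normality to supply the diagonal relations $\Ta_j T_j=T_j\Ta_j$, combine with the off-diagonal hypothesis to obtain $\Ta_j T_k=T_k\Ta_j$ for all $j,k$, sum over $k$, and invoke the theorem's hypothesis for~(\ref{ST2}).
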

\begin{lemma}[\cite{QHC}]\label{KR0} Let $x,y,z\in\h$ with $\normA{z}=1$. Then
\begin{equation*}
  \abs{\seqA{x,z}\seqA{z,y}}\leq \frac{1}{2}\bra{\normA{x}\normA{y}+\abs{\seqA{x,y}}}.
\end{equation*}
\end{lemma}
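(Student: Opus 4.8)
The statement is the semi-Hilbertian analogue of the classical Buzano inequality, so the plan is to transplant the standard Hilbert-space proof into the $A$-semi-inner-product setting, using only those properties of $\seqA{\cdot,\cdot}$ that survive degeneracy. First I would handle the trivial cases: if $\seqA{x,z}=0$ or $\seqA{z,y}=0$ the left-hand side vanishes and there is nothing to prove; so assume both are nonzero, which in particular forces $z\notin\n(A)$ (consistent with $\normA{z}=1$). The core idea is to decompose $x$ relative to the ``unit vector'' $z$: write $x = \seqA{x,z}\,z + u$, where $u := x - \seqA{x,z}\,z$ satisfies $\seqA{u,z}=0$ because $\normA{z}^2=\seqA{z,z}=1$. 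The semi-inner product is still a genuine (possibly degenerate) positive sesquilinear form, so $\seqA{u,z}=0$ is all orthogonality one needs; no injectivity of $A$ is required.

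Second, I would substitute this decomposition into $\seqA{x,z}\seqA{z,y}$. Since $\seqA{x,z} = \seqA{x,z}\seqA{z,z}$, multiplying through gives
\begin{equation*}
  \seqA{x,z}\seqA{z,y} = \seqA{\seqA{x,z}\,z + u,\, y}\,\seqA{z,z} - \seqA{u,y}\,\seqA{z,z} \;=\; \seqA{x,y} - \seqA{u,y},
\end{equation*}
wait—more cleanly: from $\seqA{x,z}\,z = x-u$ one gets $\seqA{x,z}\seqA{z,y} = \seqA{x,z}\,\overline{\seqA{y,z}}$, and the pertinent identity is $\seqA{x,z}\seqA{z,y} = \tfrac12\seqA{x,y} + \tfrac12\big(\seqA{x,z}\seqA{z,y}\big)' $ after symmetrizing; concretely the known manipulation yields
\begin{equation*}
  2\,\seqA{x,z}\seqA{z,y} - \seqA{x,y} \;=\; \seqA{x,z}\seqA{z,y} - \seqA{u,y},
\end{equation*}
and then one observes $\big|\seqA{x,z}\seqA{z,y} - \seqA{u,y}\big| = \big|\seqA{u',y}\big|$ for a suitable vector $u'$ with $\normA{u'} = \normA{x}$. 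The crucial point is that the ``reflection'' $v := \seqA{x,z}\,z - u = 2\seqA{x,z}\,z - x$ satisfies $\normA{v} = \normA{x}$ — this is the semi-Hilbertian Pythagoras computation $\normA{v}^2 = |\seqA{x,z}|^2\normA{z}^2 + \normA{u}^2 = \normA{x}^2$, valid verbatim since $\seqA{u,z}=0$ — and $2\seqA{x,z}\seqA{z,y} - \seqA{x,y} = \seqA{v,y}$. Applying the Cauchy–Schwarz inequality for $\seqA{\cdot,\cdot}$ (which holds for any positive semidefinite form) gives $\big|2\seqA{x,z}\seqA{z,y} - \seqA{x,y}\big| = \big|\seqA{v,y}\big| \le \normA{v}\,\normA{y} = \normA{x}\,\normA{y}$.

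Third, I would finish with the triangle inequality on $\c$:
\begin{equation*}
  2\,\big|\seqA{x,z}\seqA{z,y}\big| \;\le\; \big|2\seqA{x,z}\seqA{z,y} - \seqA{x,y}\big| + \big|\seqA{x,y}\big| \;\le\; \normA{x}\normA{y} + \big|\seqA{x,y}\big|,
\end{equation*}
and dividing by $2$ yields the claim. The only genuinely semi-Hilbertian points to check are that $\seqA{\cdot,\cdot}$ remains sesquilinear and positive semidefinite on all of $\h$ (so that Cauchy–Schwarz applies without injectivity of $A$) and that $\normA{z}=1$ is used exactly where $\seqA{z,z}=1$ is invoked; neither is an obstacle. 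I do not expect any real difficulty here — the proof is a line-for-line port of Buzano's argument — so the ``hard part'' is merely bookkeeping: getting the identity $2\seqA{x,z}\seqA{z,y} - \seqA{x,y} = \seqA{v,y}$ with the correct conjugations, since the $A$-inner product is linear in the first slot and conjugate-linear in the second in this paper's convention. Alternatively, if one prefers to cite rather than reprove, the result is quoted from \cite{QHC} and one may simply remark that its proof carries over unchanged because it uses only the Cauchy–Schwarz inequality and the orthogonal decomposition afforded by a positive semidefinite sesquilinear form.
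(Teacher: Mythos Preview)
The paper does not prove this lemma at all; it simply cites the result from \cite{QHC} and uses it as a known ingredient. Your proposal therefore goes further than the paper by supplying an actual proof, and the argument you outline --- the Buzano reflection $v := 2\seqA{x,z}\,z - x$, the Pythagorean check that $\normA{v}=\normA{x}$ (using $\seqA{u,z}=0$ and $\normA{z}=1$), the identity $\seqA{v,y}=2\seqA{x,z}\seqA{z,y}-\seqA{x,y}$, followed by Cauchy--Schwarz and the scalar triangle inequality --- is correct and requires only that $\seqA{\cdot,\cdot}$ be a positive semidefinite sesquilinear form, so no injectivity of $A$ is needed.

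Two minor remarks. First, the middle paragraph of your write-up contains some false starts (``wait---more cleanly'', an unfinished symmetrization) that obscure the clean three-line computation you ultimately give; tighten it to just the definition of $v$, the norm identity, and the inner-product identity. Second, the separate treatment of the ``trivial cases'' $\seqA{x,z}=0$ or $\seqA{z,y}=0$ is unnecessary, since the reflection argument works uniformly. Your closing observation that one could simply cite the reference is, in fact, precisely what the paper does.
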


\begin{lemma}\label{Lemma3.18} For every $a,b,e\in\h$ with $\normA{e}=1$, we have
\begin{equation}\label{MD1}
  \abs{\seqA{a,e}\seqA{e,b}}\leq \bra{\frac{1+\alpha}{2}}\normA{a}\normA{b}+\bra{\frac{1-\alpha}{2}}\abs{\seqA{a,b}}
\end{equation}
for every $\alpha\in [0,1]$.
\end{lemma}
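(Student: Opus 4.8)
The plan is to obtain \eqref{MD1} as a convex combination of two elementary estimates for the quantity $\abs{\seqA{a,e}\seqA{e,b}}$: the crude Cauchy--Schwarz bound and the sharper bound supplied by Lemma~\ref{KR0}. The right-hand side of \eqref{MD1}, written as $\tfrac{1+\alpha}{2}\normA{a}\normA{b}+\tfrac{1-\alpha}{2}\abs{\seqA{a,b}}$, is visibly an affine function of $\alpha$ interpolating between $\normA{a}\normA{b}$ at $\alpha=1$ and $\tfrac12\bra{\normA{a}\normA{b}+\abs{\seqA{a,b}}}$ at $\alpha=0$, and these two endpoints are exactly the two estimates to be used.

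First I would record the two bounds. Since $A\geq 0$, the form $\seqA{\cdot,\cdot}$ is a positive semidefinite sesquilinear form, so the Cauchy--Schwarz inequality applies: using $\normA{e}=1$ we get $\abs{\seqA{a,e}}\leq \normA{a}\normA{e}=\normA{a}$ and $\abs{\seqA{e,b}}\leq\normA{e}\normA{b}=\normA{b}$, hence
\begin{equation*}
  \abs{\seqA{a,e}\seqA{e,b}}\leq \normA{a}\normA{b}.
\end{equation*}
Second, applying Lemma~\ref{KR0} with $x=a$, $y=b$, $z=e$ (legitimate because $\normA{e}=1$) yields
\begin{equation*}
  \abs{\seqA{a,e}\seqA{e,b}}\leq \tfrac12\bra{\normA{a}\normA{b}+\abs{\seqA{a,b}}}.
\end{equation*}

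Next, fix $\alpha\in[0,1]$ so that both $\alpha$ and $1-\alpha$ are nonnegative. Multiplying the first bound by $\alpha$, the second by $1-\alpha$, and adding preserves the inequality direction; the coefficient of $\normA{a}\normA{b}$ becomes $\alpha+\tfrac{1-\alpha}{2}=\tfrac{1+\alpha}{2}$, and the coefficient of $\abs{\seqA{a,b}}$ becomes $\tfrac{1-\alpha}{2}$, which is precisely the claimed inequality \eqref{MD1}.

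There is no genuine obstacle here: the only points requiring a moment's care are that Cauchy--Schwarz is valid for the (possibly degenerate) seminorm $\normA{\cdot}$, which holds since $A\geq 0$, and that restricting to $\alpha\in[0,1]$ is what makes the weighted average a true convex combination. As a consistency check, $\alpha=0$ returns Lemma~\ref{KR0} and $\alpha=1$ returns the plain Cauchy--Schwarz estimate, so \eqref{MD1} is a one-parameter family that genuinely refines Lemma~\ref{KR0} toward the trivial bound.
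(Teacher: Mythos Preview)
Your proof is correct and follows essentially the same approach as the paper: split $\abs{\seqA{a,e}\seqA{e,b}}$ as $\alpha\abs{\seqA{a,e}\seqA{e,b}}+(1-\alpha)\abs{\seqA{a,e}\seqA{e,b}}$, bound the first term by the Cauchy--Schwarz estimate $\normA{a}\normA{b}$ and the second by Lemma~\ref{KR0}, then collect coefficients. The paper prefaces this with a refined Cauchy--Schwarz chain involving $\abs{\seqA{a,b}-\seqA{a,e}\seqA{e,b}}$, but only the crude endpoint $\abs{\seqA{a,e}\seqA{e,b}}\leq\normA{a}\normA{b}$ is actually used, exactly as you do.
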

\begin{proof} The following is a refinement of the Cauchy-Schwarz inequality
$$\abs{\seqA{a,b}}\leq \abs{\seqA{a,b}-\seqA{a,e}\seqA{e,b}}+\abs{\seqA{a,e}\seqA{e,b}}\leq \normA{a}\normA{b}$$
for every $a,b,e\in\h$ with $\normA{e}=1$.

From inequality (\ref{MD1}), we conclude that
\begin{eqnarray*}
  \abs{\seqA{a,e}\seqA{e,b}} &=& \alpha\abs{\seqA{a,e}\seqA{e,b}}+(1-\alpha)\abs{\seqA{a,e}\seqA{e,b}} \\
   &\leq& \alpha\normA{a}\normA{b}+\frac{1-\alpha}{2}\bra{\normA{a}\normA{b}+\abs{\seqA{a,b}}} \,\,\bra{\mbox{by Lemma \ref{KR0}}} \\
   &=&\bra{\frac{1+\alpha}{2}}\normA{a}\normA{b}+\bra{\frac{1-\alpha}{2}}\abs{\seqA{a,b}}.
\end{eqnarray*}
\end{proof}
Depending on Lemma \ref{Lemma3.18} and the   convexity of the function
$f(t)=t^r,r\geq 1$, we have
\begin{lemma} \label{Lemma3.19}For every $a,b,e\in\h$ with $\normA{e}=1$, we have
\begin{equation}\label{MD2}
  \abs{\seqA{a,e}\seqA{e,b}}^{r}\leq \bra{\frac{1+\alpha}{2}}\normA{a}^{r}\normA{b}^{r}+\bra{\frac{1-\alpha}{2}}\abs{\seqA{a,b}}^{r}
\end{equation}
for every $\alpha\in [0,1]$ and $r\geq 1$.
\end{lemma}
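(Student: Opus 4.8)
The plan is to bootstrap the power-$r$ estimate \eqref{MD2} directly from the $r=1$ case already established in Lemma~\ref{Lemma3.18}, using only two elementary facts about $f(t)=t^r$ on $[0,\infty)$: that it is nondecreasing, and that it is convex, for $r\geq 1$.

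First I would apply Lemma~\ref{Lemma3.18} verbatim to $a,b,e$ with $\normA{e}=1$, obtaining
\[
\abs{\seqA{a,e}\seqA{e,b}}\leq \bra{\tfrac{1+\alpha}{2}}\normA{a}\normA{b}+\bra{\tfrac{1-\alpha}{2}}\abs{\seqA{a,b}}.
\]
Since all three of $\abs{\seqA{a,e}\seqA{e,b}}$, $\normA{a}\normA{b}$, $\abs{\seqA{a,b}}$ are nonnegative, and $t\mapsto t^r$ is nondecreasing on $[0,\infty)$ for $r\geq 1$, raising both sides to the $r$-th power preserves the inequality:
\[
\abs{\seqA{a,e}\seqA{e,b}}^r\leq \bra{\bra{\tfrac{1+\alpha}{2}}\normA{a}\normA{b}+\bra{\tfrac{1-\alpha}{2}}\abs{\seqA{a,b}}}^{r}.
\]
Next I would observe that, because $\alpha\in[0,1]$, the coefficients $\tfrac{1+\alpha}{2}$ and $\tfrac{1-\alpha}{2}$ are both nonnegative and satisfy $\tfrac{1+\alpha}{2}+\tfrac{1-\alpha}{2}=1$, so the bracketed quantity on the right is $f$ evaluated at a genuine convex combination of the points $\normA{a}\normA{b}$ and $\abs{\seqA{a,b}}$. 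Convexity of $f(t)=t^r$ (Jensen's inequality for two points) then gives
\[
\bra{\bra{\tfrac{1+\alpha}{2}}\normA{a}\normA{b}+\bra{\tfrac{1-\alpha}{2}}\abs{\seqA{a,b}}}^{r}\leq \bra{\tfrac{1+\alpha}{2}}\bra{\normA{a}\normA{b}}^{r}+\bra{\tfrac{1-\alpha}{2}}\abs{\seqA{a,b}}^{r},
\]
and since $\bra{\normA{a}\normA{b}}^{r}=\normA{a}^{r}\normA{b}^{r}$, chaining the last two displays yields exactly \eqref{MD2}.

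I expect no real obstacle here beyond bookkeeping: the only items to check are that the two weights are admissible (nonnegative and summing to one, which is where $\alpha\in[0,1]$ is used) and that every quantity in sight is nonnegative so that monotonicity of $t\mapsto t^r$ legitimately applies. The one point worth stating explicitly is the order of operations — first monotonicity, to pass to the $r$-th power of the $r=1$ bound of Lemma~\ref{Lemma3.18}, and only then convexity, to distribute the power across the convex combination — since carrying these out in the reverse order would not be meaningful.
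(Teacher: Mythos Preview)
Your proof is correct and is essentially the approach the paper indicates: the paper does not write out a proof at all but only states that the lemma follows from Lemma~\ref{Lemma3.18} together with the convexity of $f(t)=t^{r}$ for $r\geq 1$. Your write-up fills in exactly those details (monotonicity to raise both sides of \eqref{MD1} to the $r$-th power, then convexity to push the exponent through the convex combination), with the correct observation that $\alpha\in[0,1]$ is precisely what makes the weights admissible.
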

Based on Lemma \ref{Lemma3.19}, we obtain a new upper bound for the $A$-numerical radius of $T\in\b_A(\h)$.
\begin{theorem}\label{Theorem3.20} If $T\in\b_A(\h)$, then
\begin{equation}\label{MD3}
  w_A^{2r}(T)\leq \bra{\frac{1+\alpha}{4}}\normA{\bra{\Ta T}^r+\bra{T\Ta}^r}+\bra{\frac{1-\alpha}{2}}w_A^r(T^2)
\end{equation}
for every $\alpha\in [0,1]$ and $r\geq 1$.
\end{theorem}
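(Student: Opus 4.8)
The plan is to start from the variational characterization $w_A(T) = \sup_{\normA{x}=1} \abs{\seqA{Tx,x}}$ and, for a fixed unit vector $x$ with $\normA{x}=1$, apply Lemma~\ref{Lemma3.19} with a judicious choice of $a$, $b$, and $e$. The natural choice is $e = x$, $a = Tx$, and $b = \Ta x$, so that $\seqA{a,e} = \seqA{Tx,x}$ and $\seqA{e,b} = \seqA{x,\Ta x} = \seqA{Tx,x}$ as well (using $A\Ta = T^*A$). Thus $\abs{\seqA{a,e}\seqA{e,b}} = \abs{\seqA{Tx,x}}^2$, and the left-hand side of \eqref{MD2} becomes $\abs{\seqA{Tx,x}}^{2r}$, which is exactly what we want to bound by $w_A^{2r}(T)$ after taking a supremum.

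Next I would unpack the right-hand side of \eqref{MD2} under this substitution. We have $\normA{a}^r\normA{b}^r = \normA{Tx}^r\normA{\Ta x}^r$. By the AM-GM inequality, $\normA{Tx}^r\normA{\Ta x}^r \leq \tfrac12\bra{\normA{Tx}^{2r} + \normA{\Ta x}^{2r}} = \tfrac12\bra{\seqA{\Ta Tx,x}^r + \seqA{T\Ta x,x}^r}$, and since $\Ta T$ and $T\Ta$ are $A$-positive, for any $A$-positive operator $R$ one has $\seqA{Rx,x}^r \le \seqA{R^r x, x}$ by the operator convexity/Jensen-type inequality for $t \mapsto t^r$ applied in the semi-Hilbertian setting (equivalently, spectral calculus for the $A$-selfadjoint $R$, whose $A$-numerical range lies in $[0,\normA{R}]$). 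Hence $\normA{Tx}^r\normA{\Ta x}^r \le \tfrac12\seqA{\bra{(\Ta T)^r + (T\Ta)^r}x,x} \le \tfrac12\normA{(\Ta T)^r + (T\Ta)^r}$. For the second term, $\abs{\seqA{a,b}}^r = \abs{\seqA{Tx,\Ta x}}^r = \abs{\seqA{ATx,\Ta x}}^r = \abs{\seqA{\Ta \cdot Tx, x}}^r$; more to the point $\seqA{Tx,\Ta x} = \seqA{T^{\sharp_A}Tx, x}$... actually the cleanest route is $\abs{\seqA{Tx,\Ta x}} = \abs{\seqA{T^2 x, x}}$ upon writing $\seqA{Tx,\Ta x} = \seqA{A\,Tx,\Ta x}$ and using $A\Ta = T^*A$ to slide $\Ta$ across, giving $\seqA{(\Ta)^{\sharp_A}Tx,x}$... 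I would double-check this identity carefully, but the intended outcome is $\abs{\seqA{a,b}} \le w_A(T^2)$, so that $\abs{\seqA{a,b}}^r \le w_A^r(T^2)$.

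Assembling these estimates into \eqref{MD2} gives, for every unit vector $x$,
\begin{equation*}
  \abs{\seqA{Tx,x}}^{2r} \le \bra{\frac{1+\alpha}{2}}\cdot\frac12\normA{(\Ta T)^r + (T\Ta)^r} + \bra{\frac{1-\alpha}{2}}w_A^r(T^2),
\end{equation*}
and taking the supremum over all $x$ with $\normA{x}=1$ yields exactly \eqref{MD3}.

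I expect the main obstacle to be the bookkeeping of the two $A$-adjoint identities: confirming that $\seqA{x,\Ta x} = \seqA{Tx,x}$ (so the two inner product factors coincide) and that $\seqA{Tx,\Ta x}$ is governed by $w_A(T^2)$ rather than something larger. These hinge on the relations $A\Ta = T^*A$ and $(\Ta)^{\sharp_A} = PTP$ together with the fact that the relevant vectors can be taken in $\overline{\r(A)}$; one must be mildly careful that $P$ acts as the identity on the range of $A$ so that the semi-inner products match up. The Jensen-type step $\seqA{Rx,x}^r \le \seqA{R^r x,x}$ for $A$-positive $R$ is standard via the spectral theorem for $A$-selfadjoint operators but should be cited or justified in a line. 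Everything else is a direct substitution into Lemma~\ref{Lemma3.19}.
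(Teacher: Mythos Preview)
Your proposal is correct and follows essentially the same route as the paper's own proof: the paper also takes $a=Tx$, $b=\Ta x$, $e=x$, applies Lemma~\ref{Lemma3.19}, uses AM--GM on $\normA{Tx}^r\normA{\Ta x}^r$, passes from $\seqA{\Ta Tx,x}^r$ to $\seqA{(\Ta T)^r x,x}$ via a H\"older--McCarthy/Jensen step, identifies $\seqA{Tx,\Ta x}=\seqA{T^2x,x}$, and then takes the supremum. Your two ``double-check'' identities are both fine: $\seqA{x,\Ta x}=\overline{\seqA{Tx,x}}$ follows directly from $A\Ta=T^*A$ and conjugate symmetry, and $\seqA{Tx,\Ta x}=\seq{Tx,T^*Ax}=\seq{T^2x,Ax}=\seqA{T^2x,x}$, so there is no obstacle there.
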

\begin{proof} Let $x\in\h$ with $\normA{x}=1$. Then
\begin{eqnarray*}
 \abs{\seqA{Tx,x}}^{2r}&=& \abs{\seqA{Tx,x}\seqA{x,\Ta x}}^r \\
   &\leq& \bra{\frac{1+\alpha}{2}}\normA{Tx}^{r}\normA{\Ta x}^{r}+\bra{\frac{1-\alpha}{2}}\abs{\seqA{Tx,\Ta x}}^{r}
   \,\,\bra{\mbox{by Lemma \ref{Lemma3.19}}}\\
   &\leq& \bra{\frac{1+\alpha}{4}}\bra{\normA{Tx}^{2r}+\normA{\Ta x}^{2r}}+\bra{\frac{1-\alpha}{2}}\abs{\seqA{T^2x,x}}^{r}
   \end{eqnarray*}
   \begin{eqnarray*}
   &&\bra{\mbox{by the arithmetic-geometric mean inequality}}\\
   &\leq&\bra{\frac{1+\alpha}{4}}\seqA{\bra{\bra{\Ta T}^{r}+\bra{T\Ta}^{r}}x,x}+\bra{\frac{1-\alpha}{2}}\abs{\seqA{T^2x,x}}^{r}\\
   &\leq& \bra{\frac{1+\alpha}{4}}\normA{\bra{\Ta T}^{r}+\bra{T\Ta}^{r}}+\bra{\frac{1-\alpha}{2}}w_A^{r}(T^2).
\end{eqnarray*}
Taking the supremum over all vectors $x\in\h$ with $\normA{x}=1$ in the above inequality, we have
$$w_A^{2r}(T)\leq \bra{\frac{1+\alpha}{4}}\normA{\bra{\Ta T}^{r}+\bra{T\Ta}^{r}}+\bra{\frac{1-\alpha}{2}}w_A^{r}(T^2),$$
as required.
\end{proof}
\begin{remark} Since Theorem 2.16 of \cite{BPN} is a specific case of our conclusion for $\alpha=0$, the upper bound in Theorem \ref{Theorem3.20} is a refinement of \cite[Theorem 2.16]{BPN}.
\end{remark}
The following example illustrates Theorem \ref{Theorem3.20}.
\begin{example} Let $T=\begin{bmatrix} 1 &1 \\0 & 0 \\\end{bmatrix} $, $A=\begin{bmatrix}
                         1 &-1 \\
                         -1 & 2 \\
                       \end{bmatrix}$, $r=1$ and $\alpha=\frac{1}{2}$. Then elementary calculations show that
$$\Ta=\begin{bmatrix}
                         1 &1 \\
                         0 & 0 \\
                       \end{bmatrix}, \Ta T+T\Ta=\begin{bmatrix}
                         8 &-2 \\
                         2 & 2 \\
                       \end{bmatrix}.$$
Hence,
$$\normA{\Ta T+T\Ta}=10, w_A(T)=2\,\,\mbox{and}\,\, w_A(T^2)=2.$$
Consequently,
$$4=w_A^{2}(T)\leq \frac{3}{8}\normA{\Ta T+T\Ta}+\frac{1}{4}w_A(T^2)=4.25.$$
\end{example}
The next lemma will be used in the proof of Theorem \ref{Yafouz1}  to obtain  an upper bound for power of the numerical radius of
$\sum_{j=1}^{n}T_j^{\alpha}X_jS_j^{\alpha}$ under assumption $\alpha\in [0,1]$.
\begin{lemma}\label{Lemma-AG} Let $a,b\geq 0$, $\alpha\in [0,1]$ and $p,q>1$ such that $\frac{1}{p}+\frac{1}{q}=1$. Then
\begin{enumerate}
  \item [(i)] $a^{\alpha}b^{1-\alpha}\leq \alpha a+(1-\alpha)b\leq \bra{\alpha a^r+(1-\alpha)b^r}^{\frac{1}{r}}$,
  \item [(ii)] $ab\leq \frac{a^p}{p}+\frac{b^q}{q}\leq \bra{\frac{a^{pr}}{p}+\frac{b^{qr}}{q}}^{\frac{1}{r}}$
\end{enumerate}
for every $r\geq 1$.
\end{lemma}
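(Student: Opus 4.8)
The plan is to derive both chains from two classical facts: the weighted arithmetic--geometric mean inequality, and the convexity of the power function $t\mapsto t^r$ on $[0,\infty)$ for $r\geq 1$. First I would dispose of the degenerate cases: if $a=0$ or $b=0$ each chain reduces to an obvious statement, and if $r=1$ the rightmost quantity coincides with the middle one; so from now on I assume $a,b>0$ and $r>1$.

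For the left inequality in (i), $a^{\alpha}b^{1-\alpha}\leq \alpha a+(1-\alpha)b$, I would use the concavity of $\log$: since $\log\bra{\alpha a+(1-\alpha)b}\geq \alpha\log a+(1-\alpha)\log b=\log\bra{a^{\alpha}b^{1-\alpha}}$, exponentiating (an increasing operation) gives the bound. For the right inequality $\alpha a+(1-\alpha)b\leq \bra{\alpha a^r+(1-\alpha)b^r}^{1/r}$, I would apply Jensen's inequality to the convex function $\varphi(t)=t^r$ with the two-point weights $\alpha$ at $a$ and $1-\alpha$ at $b$ (these sum to $1$ since $\alpha\in[0,1]$), obtaining $\bra{\alpha a+(1-\alpha)b}^r\leq \alpha a^r+(1-\alpha)b^r$, and then raising both sides to the power $1/r$, which preserves the inequality because $t\mapsto t^{1/r}$ is increasing on $[0,\infty)$.

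For (ii) no new argument is needed: it is (i) after a change of variables. Setting $\alpha=\frac{1}{p}$, so that $1-\alpha=\frac{1}{q}$ by the hypothesis $\frac1p+\frac1q=1$, and replacing $a$ by $a^p$ and $b$ by $b^q$ in (i), I get $ab=\bra{a^p}^{1/p}\bra{b^q}^{1/q}\leq \frac{a^p}{p}+\frac{b^q}{q}\leq \bra{\frac{a^{pr}}{p}+\frac{b^{qr}}{q}}^{1/r}$, which is exactly the asserted chain.

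The main obstacle here is bookkeeping rather than genuine difficulty: one must check at each step that the substitutions preserve the direction of the inequalities (legitimate because $t\mapsto t^r$ and $t\mapsto t^{1/r}$ are increasing on $[0,\infty)$) and that the weights entering Jensen's inequality do form a probability vector. If desired, I would also record the equality conditions, since the AM--GM step is tight iff $a=b$ and the power-mean step is tight iff $a=b$ or $r=1$.
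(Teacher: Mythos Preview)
Your proof is correct. The paper actually states this lemma without proof, treating it as a standard auxiliary fact; your argument via the weighted AM--GM inequality (concavity of $\log$) for the first step and Jensen's inequality for the convex map $t\mapsto t^r$ for the second, followed by the substitution $\alpha=\tfrac1p$, $a\mapsto a^p$, $b\mapsto b^q$ to deduce (ii) from (i), is exactly the classical route and fills the gap cleanly.
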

\begin{theorem}\label{Yafouz1} Suppose that $T_j,X_j,S_j\in\b_A(\h)$ $(j=1,\cdots,n)$ such that $T_j,S_j$$(j=1,\cdots,n)$
are $A$-positive. Then
\begin{equation}\label{MRQ1}
  w_A^{r}\bra{\sum_{j=1}^{n}T_j^{\alpha}X_jS_j^{\alpha}}\leq n^{r-1}\normA{X}^r\sum_{j=1}^{n}\normA{\frac{1}{p_j}T_j^{2p_jr}+\frac{1}{q_j}S_j^{2q_jr}}^{\alpha}
\end{equation}
for every $\alpha\in [0,1]$, $r\geq 1$, $p_j,q_j>1$ with $\frac{1}{p_j}+\frac{1}{q_j}=1$ $(j=1,\cdots,n)$, $p_jr,q_jr\geq 2$
and $\normA{X}=\max_{1\leq j\leq n}\normA{X_j}$.
\end{theorem}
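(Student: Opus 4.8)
The plan is to fix a single $A$-unit vector and peel the inequality apart until only scalar estimates remain. Let $x\in\h$ with $\normA{x}=1$. Since each $T_j$ is $A$-positive, so is the fractional power $T_j^{\alpha}$, and hence $T_j^{\alpha}$ is $A$-selfadjoint; this lets me move $T_j^{\alpha}$ into the second slot of the semi-inner product, so that $\seqA{T_j^{\alpha}X_jS_j^{\alpha}x,x}=\seqA{X_jS_j^{\alpha}x,T_j^{\alpha}x}$. Combining this with the triangle inequality, the $A$-Cauchy--Schwarz inequality, and the submultiplicativity $\normA{X_jS_j^{\alpha}x}\le\normA{X_j}\normA{S_j^{\alpha}x}\le\normA{X}\normA{S_j^{\alpha}x}$, I obtain
\begin{equation*}
\abs{\seqA{\bra{\sum_{j=1}^{n}T_j^{\alpha}X_jS_j^{\alpha}}x,x}}\le\normA{X}\sum_{j=1}^{n}\normA{S_j^{\alpha}x}\,\normA{T_j^{\alpha}x}.
\end{equation*}

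Next I would raise both sides to the power $r\ge1$ and use the convexity of $t\mapsto t^{r}$ in the form $\bra{\sum_{j=1}^{n}c_j}^{r}\le n^{r-1}\sum_{j=1}^{n}c_j^{r}$, which produces the factor $n^{r-1}\normA{X}^{r}$ in front of $\sum_{j=1}^{n}\normA{S_j^{\alpha}x}^{r}\normA{T_j^{\alpha}x}^{r}$. The heart of the argument is the estimate of a single mixed term. Using that $T_j^{\alpha}$ is $A$-selfadjoint, $\normA{T_j^{\alpha}x}^{2}=\seqA{T_j^{2\alpha}x,x}$, so $\normA{T_j^{\alpha}x}^{r}=\seqA{T_j^{2\alpha}x,x}^{r/2}$. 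Writing $T_j^{2\alpha}=\bra{T_j^{p_jr}}^{2\alpha/(p_jr)}$ and noting that $2\alpha/(p_jr)\le1$ precisely because $\alpha\le1$ and $p_jr\ge2$, the McCarthy/operator-Jensen inequality for the concave power $t\mapsto t^{\theta}$ — valid for $A$-positive operators and $A$-unit vectors — yields $\seqA{T_j^{2\alpha}x,x}\le\seqA{T_j^{p_jr}x,x}^{2\alpha/(p_jr)}$, hence $\normA{T_j^{\alpha}x}^{r}\le\seqA{T_j^{p_jr}x,x}^{\alpha/p_j}$; the same bound with $S_j$ in place of $T_j$ and $q_j$ in place of $p_j$ holds likewise.

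Multiplying the two bounds gives $\normA{S_j^{\alpha}x}^{r}\normA{T_j^{\alpha}x}^{r}\le\bra{\seqA{T_j^{p_jr}x,x}^{1/p_j}\seqA{S_j^{q_jr}x,x}^{1/q_j}}^{\alpha}$, and Lemma~\ref{Lemma-AG}(i) (weighted arithmetic--geometric mean with weights $1/p_j$ and $1/q_j$) together with the linearity of $\seqA{\cdot,\cdot}$ bounds the inner quantity by $\seqA{\bra{\frac{1}{p_j}T_j^{p_jr}+\frac{1}{q_j}S_j^{q_jr}}x,x}$. Since that operator is $A$-positive, its quadratic form is majorised by its $A$-norm, so — invoking monotonicity of $t\mapsto t^{\alpha}$ — each summand is at most $\normA{\frac{1}{p_j}T_j^{p_jr}+\frac{1}{q_j}S_j^{q_jr}}^{\alpha}$, a constant independent of $x$. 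Summing over $j$ and taking the supremum over all $x$ with $\normA{x}=1$ then delivers the right-hand side of \eqref{MRQ1}.

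The step I expect to be the main obstacle is not any single inequality but the careful justification of the fractional-power calculus in the semi-Hilbertian setting: that $T_j^{\alpha}$ is again $A$-positive (hence $A$-selfadjoint), that $\normA{T_j^{\alpha}x}^{2}=\seqA{T_j^{2\alpha}x,x}$, and that McCarthy's inequality $\seqA{P^{\theta}x,x}\le\seqA{Px,x}^{\theta}$ persists for $A$-positive $P$, $\normA{x}=1$ and $0<\theta\le1$. All of these follow by transporting the classical facts through the canonical map from $\bra{\overline{\r(A)},\seqA{\cdot,\cdot}}$ into its Hilbert-space completion, under which an $A$-positive operator induces a bona fide positive operator and the continuous functional calculus is compatible with the $\sharp_A$-operations; pinning down that compatibility is where the real care is required. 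Everything else — the triangle inequality, $A$-Cauchy--Schwarz, submultiplicativity of $\normA{\cdot}$, the convexity bound $\bra{\sum c_j}^r\le n^{r-1}\sum c_j^r$, and the Young-type estimates of Lemma~\ref{Lemma-AG} — is routine.
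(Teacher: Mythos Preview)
Your proposal is correct and follows essentially the same route as the paper's proof: fix an $A$-unit vector, apply $A$-Cauchy--Schwarz and the power-mean inequality to extract the factor $n^{r-1}\normA{X}^r$, then combine McCarthy's inequality with a Young/AM--GM estimate to reach $\seqA{\bra{\tfrac{1}{p_j}T_j^{p_jr}+\tfrac{1}{q_j}S_j^{q_jr}}x,x}^{\alpha}$ and take the supremum. The only cosmetic difference is the order of the last two ingredients --- the paper applies Young first (Lemma~\ref{Lemma-AG}(ii)), then McCarthy, then the concavity of $t^{\alpha}$, whereas you apply McCarthy first and then the weighted AM--GM (Lemma~\ref{Lemma-AG}(i)) inside the $\alpha$-power; the two orderings yield the same bound.
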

\begin{proof} Let $x\in\h$ with $\normA{x}=1$. Then
\begin{eqnarray*}
  \abs{\seqA{\bra{\sum_{j=1}^{n}T_j^{\alpha}X_jS_j^{\alpha}}x,x}}^{r} &=&\abs{\sum_{j=1}^{n}\seqA{T_j^{\alpha}X_jS_j^{\alpha}x,x}}^{r} =\abs{\sum_{j=1}^{n}\seqA{X_jS_j^{\alpha}x,T_j^{\alpha}x}}^{r}\\
   &\leq& \abs{\sum_{j=1}^{n}\normA{T_j^{\alpha}}\normA{X_j}\normA{S_j^{\alpha}}}^{r}\leq n^{r-1}\sum_{j=1}^{n} \normA{T_j^{\alpha}}^{r}\normA{X_j}^{r}\normA{S_j^{\alpha}}^{r}\\
   &\leq& n^{r-1}\normA{X}^{r}\sum_{j=1}^{n}\seqA{T_j^{2\alpha}x,x}^{r/2}\seqA{S_j^{2\alpha}x,x}^{r/2}\\
   &\leq& n^{r-1}\normA{X}^{r}\sum_{j=1}^{n}\sbra{\frac{1}{p_j}\seqA{T_j^{2\alpha}x,x}^{\frac{p_jr}{2}}+\frac{1}{q_j}
   \seqA{S_j^{2\alpha}x,x}^{\frac{q_jr}{2}}}\\
   &&\bra{\mbox{by Lemma \ref{Lemma-AG}}}\\
   &\leq& n^{r-1}\normA{X}^{r}\sum_{j=1}^{n}\sbra{\frac{1}{p_j}\seqA{T_j^{p_jr}x,x}^{\alpha}+\frac{1}{q_j}
   \seqA{S_j^{q_jr}x,x}^{\alpha}}\\
   &&\bra{\mbox{by H\"older-McCarthy inequality}}\\
   &\leq& n^{r-1}\normA{X}^{r}\sum_{j=1}^{n}\sbra{\frac{1}{p_j}\seqA{T_j^{p_jr}x,x}+\frac{1}{q_j}
   \seqA{S_j^{q_jr}x,x}}^{\alpha}\\
   &&\bra{\mbox{by the concavity of $f(t)=t^{\alpha}$}}.
\end{eqnarray*}
Taking the supremum over all vectors $x\in\h$ with $\normA{x}=1$ in the above inequality, we deduce the required result.
\end{proof}
\begin{theorem}\label{Final-1} Suppose that $T_j,X_j,S_j\in\b_A(\h)$ $(j=1,\cdots,n)$ such that $T_j,S_j$$(j=1,\cdots,n)$
are $A$-positive. Then
\begin{equation}\label{MRQ1}
  w_A^{r}\bra{\sum_{j=1}^{n}T_j^{\alpha}X_jS_j^{1-\alpha}}\leq n^{r-1}\normA{X}^r\sum_{j=1}^{n}\normA{\alpha T_j^{r}+(1-\alpha)S_j^{r}}
\end{equation}
for every $\alpha\in [0,1]$, $r\geq 2$ and $\normA{X}=\max_{1\leq j\leq n}\normA{X_j}$.
\end{theorem}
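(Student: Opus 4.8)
The plan is to follow the blueprint of the proof of Theorem~\ref{Yafouz1}, replacing the factor $S_j^{\alpha}$ by $S_j^{1-\alpha}$ and invoking Lemma~\ref{Lemma-AG}(i) where that proof used Lemma~\ref{Lemma-AG}(ii). Fix $x\in\h$ with $\normA{x}=1$. Since each $T_j$ is $A$-positive it is $A$-selfadjoint, and hence so is every power $T_j^{\alpha}$; this lets me move it to the other slot, $\seqA{T_j^{\alpha}X_jS_j^{1-\alpha}x,x}=\seqA{X_jS_j^{1-\alpha}x,T_j^{\alpha}x}$. Applying the triangle inequality over $j$, then Cauchy--Schwarz for $\seqA{\cdot,\cdot}$ together with $\normA{X_ju}\le\normA{X_j}\normA{u}$, and finally the identities $\normA{T_j^{\alpha}x}^{2}=\seqA{T_j^{2\alpha}x,x}$ and $\normA{S_j^{1-\alpha}x}^{2}=\seqA{S_j^{2(1-\alpha)}x,x}$ (again because these powers are $A$-selfadjoint), I arrive at
\[
\abs{\seqA{\bra{\sum_{j=1}^{n}T_j^{\alpha}X_jS_j^{1-\alpha}}x,x}}\le\sum_{j=1}^{n}\normA{X_j}\,\seqA{T_j^{2\alpha}x,x}^{1/2}\seqA{S_j^{2(1-\alpha)}x,x}^{1/2}.
\]

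Next I raise both sides to the power $r$ and use the convexity estimate $\bra{\sum_{j=1}^{n}a_j}^{r}\le n^{r-1}\sum_{j=1}^{n}a_j^{r}$ (valid for $r\ge1$, $a_j\ge0$) together with $\normA{X_j}\le\normA{X}$, obtaining
\[
\abs{\seqA{\bra{\sum_{j=1}^{n}T_j^{\alpha}X_jS_j^{1-\alpha}}x,x}}^{r}\le n^{r-1}\normA{X}^{r}\sum_{j=1}^{n}\seqA{T_j^{2\alpha}x,x}^{r/2}\seqA{S_j^{2(1-\alpha)}x,x}^{r/2}.
\]
Because $r\ge2$ one has $\tfrac{2\alpha}{r},\tfrac{2(1-\alpha)}{r}\in[0,1]$, so the H\"older--McCarthy inequality applied to the $A$-positive operators $T_j^{r}$ and $S_j^{r}$ gives $\seqA{T_j^{2\alpha}x,x}=\seqA{(T_j^{r})^{2\alpha/r}x,x}\le\seqA{T_j^{r}x,x}^{2\alpha/r}$ and likewise $\seqA{S_j^{2(1-\alpha)}x,x}\le\seqA{S_j^{r}x,x}^{2(1-\alpha)/r}$; hence each summand is at most $\seqA{T_j^{r}x,x}^{\alpha}\seqA{S_j^{r}x,x}^{1-\alpha}$.

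Finally, the weighted arithmetic--geometric mean inequality (Lemma~\ref{Lemma-AG}(i)) bounds $\seqA{T_j^{r}x,x}^{\alpha}\seqA{S_j^{r}x,x}^{1-\alpha}$ by $\alpha\seqA{T_j^{r}x,x}+(1-\alpha)\seqA{S_j^{r}x,x}=\seqA{\bra{\alpha T_j^{r}+(1-\alpha)S_j^{r}}x,x}$, and since $\alpha T_j^{r}+(1-\alpha)S_j^{r}$ is $A$-positive this is at most $\normA{\alpha T_j^{r}+(1-\alpha)S_j^{r}}$. Stringing the estimates together and taking the supremum over all $x\in\h$ with $\normA{x}=1$ yields the required inequality.

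Once the right tools are lined up the argument is routine bookkeeping; the only delicate ingredients are the semi-Hilbertian functional-calculus facts used above — that $A$-positivity of $T_j$ and $S_j$ is inherited by $T_j^{\alpha}$, $S_j^{1-\alpha}$, $T_j^{r}$, $S_j^{r}$ and by $\alpha T_j^{r}+(1-\alpha)S_j^{r}$, and that the H\"older--McCarthy inequality is legitimately applicable with the fractional exponents $2\alpha/r$ and $2(1-\alpha)/r$. The hypothesis $r\ge2$ enters precisely at this last step to keep both exponents in $[0,1]$; this is the one place where a naive attempt with smaller $r$ would break down, so it is the main thing to check.
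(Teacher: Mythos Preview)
Your proof is correct and follows essentially the same route as the paper's: Cauchy--Schwarz and the power-mean inequality to reach $n^{r-1}\normA{X}^{r}\sum_j\seqA{T_j^{2\alpha}x,x}^{r/2}\seqA{S_j^{2(1-\alpha)}x,x}^{r/2}$, then H\"older--McCarthy followed by Lemma~\ref{Lemma-AG}(i). Your write-up is in fact cleaner --- you correctly track vector norms $\normA{T_j^{\alpha}x}$ where the paper's display slips into operator norms $\normA{T_j^{\alpha}}$, and you make explicit why the hypothesis $r\ge2$ is needed for the H\"older--McCarthy step.
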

\begin{proof} Let $x\in\h$ with $\normA{x}=1$. Then  it follows by the Cauchy-Schwarz inequality that
\begin{eqnarray*}
  \abs{\seqA{\bra{\sum_{j=1}^{n}T_j^{\alpha}X_jS_j^{1-\alpha}}x,x}}^{r} &=&\abs{\sum_{j=1}^{n}\seqA{T_j^{\alpha}X_jS_j^{1-\alpha}x,x}}^{r} =\abs{\sum_{j=1}^{n}\seqA{X_jS_j^{1-\alpha}x,T_j^{\alpha}x}}^{r}\\
   &\leq& \abs{\sum_{j=1}^{n}\normA{T_j^{\alpha}}\normA{X_j}\normA{S_j^{1-\alpha}}}^{r}\leq n^{r-1}\sum_{j=1}^{n} \normA{T_j^{\alpha}}^{r}\normA{X_j}^{r}\normA{S_j^{1-\alpha}}^{r}\\
   &\leq& n^{r-1}\normA{X}^r\sum_{j=1}^{n}\seqA{T_j^{2\alpha}x,x}^{\frac{r}{2}}\seqA{S_j^{2(1-\alpha)}x,x}^{\frac{r}{2}}\\
   &\leq& n^{r-1}\normA{X}^r\sum_{j=1}^{n}\seqA{T_j^{r}x,x}^{\alpha}\seqA{S_j^{r}x,x}^{1-\alpha}\\
   &&\bra{\mbox{by H\"older-McCarthy inequality}}
   \end{eqnarray*}
   \begin{eqnarray*}
      &\leq& n^{r-1}\normA{X}^r\sum_{j=1}^{n}\seqA{\bra{\alpha T_j^r+(1-\alpha)S_j^{r}}x,x}\\
   &&\bra{\mbox{by Lemma \ref{Lemma-AG} }}\\
   &\leq& n^{r-1}\normA{X}^r\sum_{j=1}^{n}\normA{\alpha T_j^r+(1-\alpha)S_j^{r}}.
\end{eqnarray*}
Taking the supremum over all vectors $x\in\h$ with $\normA{x}=1$ in the above inequality, we deduce the required result.
\end{proof}
The following example illustrates Theorem \ref{Final-1}.
\begin{example} Let $T_1=\begin{bmatrix} 0&1/2 \\1/2 & 0 \\\end{bmatrix}, X_1=\begin{bmatrix} 1 &0 \\1 & 1\\\end{bmatrix}$,
$S_1=\begin{bmatrix} 1/2 &0 \\0& 1/2 \\\end{bmatrix}$, $T_2=\begin{bmatrix} 1/2 &1/2\\0 & 0 \\\end{bmatrix}$, $X_2=\begin{bmatrix} 1 &1 \\0 & 1 \\\end{bmatrix}$, $S_2=\begin{bmatrix} 0&0 \\1/2 & 1/2 \\\end{bmatrix}$, $A=\begin{bmatrix} 1 &1 \\1 & 1 \\\end{bmatrix}$,
$\alpha=\frac{1}{3}$, $r=2$ and $n=2$. Then elementary calculations show that
\begin{eqnarray*}
  T_1^{\frac{1}{3}}X_1S_1^{\frac{2}{3}}+T_2^{\frac{1}{3}}X_2S_2^{\frac{2}{3}} &=&\begin{bmatrix} 3/2 &3/2 \\1/2& 0 \\\end{bmatrix} \\
  \frac{1}{3}T_1^2+\frac{2}{3}S_1^2 &=& \begin{bmatrix} 1/4 &0\\ 0 & 1/4 \\\end{bmatrix} \\
   \frac{1}{3}T_2^2+\frac{2}{3}S_2^2 &=& \begin{bmatrix} 1/12 &1/12\\ 1/6 & 1/6\\\end{bmatrix}.
\end{eqnarray*}
Hence,
\begin{eqnarray*}
  w_A\bra{T_1^{\frac{1}{3}}X_1S_1^{\frac{2}{3}}+T_2^{\frac{1}{3}}X_2S_2^{\frac{2}{3}}} &=&\frac{7+\sqrt{50}}{4}\approx 3.517 \\
  \normA{\frac{1}{3}T_1^2+\frac{2}{3}S_1^2} &=& 0.5\\
  \normA{\frac{1}{3}T_2^2+\frac{2}{3}S_2^2 } &=&0.5\\
  \normA{X}=\max\set{\normA{X_1},\normA{X_2}}&=&\sqrt{10}.
\end{eqnarray*}
Consequently,
\begin{eqnarray*}
  3.517  &\approx&w_A\bra{T_1^{\frac{1}{3}}X_1S_1^{\frac{2}{3}}+T_2^{\frac{1}{3}}X_2S_2^{\frac{2}{3}}} \\
   &\leq&\sqrt{2\normA{X}^2\bra{\normA{\frac{1}{3}T_1^2+\frac{2}{3}S_1^2}+\normA{\frac{1}{3}T_2^2+\frac{2}{3}S_2^2 }}}\approx 4.472.
\end{eqnarray*}
\end{example}

\section{Applications }
The theoretical framework developed in Sections 2 and 3 yields powerful applications across multiple domains of mathematical physics and functional analysis. We demonstrate how our $A$-norm and $A$-numerical radius inequalities provide new tools for analyzing problems in quantum mechanics, partial differential equations, and operator theory. These applications not only validate our theoretical results but also offer fresh perspectives on classical problems (see \cite{Evans, ReedSimon1975, Thaller1992}).

Building on recent advances in operator theory \cite{ACG1, ACG2}, our approach particularly enhances the understanding of quantum measurement processes and uncertainty relations, spectral analysis of nonlocal elliptic operators, and stability criteria for operator semigroups. As shown in \cite{BFS, BF}, numerical radius inequalities play a fundamental role in quantum information theory, and our refined $A$-versions extend these applications to constrained systems; the following subsections develop these connections systematically, demonstrating the operational value of our inequalities in concrete mathematical and physical contexts.

\subsection{Applications to Quantum Mechanics}\hfill
\label{sec:quantum_applications}

The theory of $A$-norms and $A$-numerical radii in semi-Hilbertian spaces developed in this paper has significant applications in various aspects of quantum mechanics. We demonstrate how our results can be applied to analyze quantum systems and their dynamics. This paper presents rigorous applications of $A$-norm and $A$-numerical radius inequalities to various problems in quantum mechanics, establishing new results for quantum Hamiltonians, time evolution operators, perturbation theory, and measurement theory within the semi-Hilbertian space framework.

\subsubsection{Spectral Analysis of Quantum Hamiltonians}
Let  $H$ be a self-adjoint Hamiltonian operator. Consider a positive operator $A\in\mathcal{B}(\mathcal{H})$ representing a physical constraint or measurement apparatus. The $A$-numerical radius provides new bounds for the energy spectrum:

\begin{theorem}
For any quantum Hamiltonian $H$ and positive operator $A$, the energy eigenvalues satisfy:
\begin{equation}
    \min_{\psi\in\mathcal{H}}\frac{\langle H\psi,H\psi\rangle_A}{\langle\psi,\psi\rangle_A} \geq \frac{w_A^2(H)}{\|H\|_A}
\end{equation}
where $w_A(H)$ is the $A$-numerical radius.
\end{theorem}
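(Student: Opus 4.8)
The plan is to convert the left-hand side into the $A$-operator-seminorm language and then to squeeze it between $w_A(H)$ and $\normA{H}$. First I would note that for every $\psi$ with $\normA{\psi}\neq 0$,
\[
\frac{\seqA{H\psi,H\psi}}{\seqA{\psi,\psi}}=\frac{\normA{H\psi}^{2}}{\normA{\psi}^{2}},
\]
so the quantity on the left is $\inf_{\normA{\psi}=1}\normA{H\psi}^{2}$. Assuming $H\in\b_A(\h)$ (which one expects to impose, since $H^{\sharp_A}$ is used below), this equals $\inf_{\normA{\psi}=1}\seqA{H^{\sharp_A}H\psi,\psi}$ because $AH^{\sharp_A}=H^{*}A$; the operator $H^{\sharp_A}H$ is $A$-positive and $A$-selfadjoint, so this infimum is its $A$-Crawford number $c_A(H^{\sharp_A}H)$, while its supremum over the $A$-unit sphere is $\normA{H^{\sharp_A}H}=\normA{H}^{2}$ by \eqref{Eq.1.1}. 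This reframes the claim as a lower bound for the bottom of the ``$A$-spectrum'' of $H^{\sharp_A}H$.

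Next I would assemble the lower bound from tools already in the paper. Cauchy--Schwarz for $\seqA{\cdot,\cdot}$ gives $\normA{H\psi}\geq\abs{\seqA{H\psi,\psi}}/\normA{\psi}$ for every $\psi$, hence $\inf_{\normA{\psi}=1}\normA{H\psi}^{2}\geq c_A(H)^{2}$; and \eqref{equiv-Num-norm} supplies $\tfrac12\normA{H}\leq w_A(H)\leq\normA{H}$, so in particular $w_A^{2}(H)/\normA{H}\leq\normA{H}$. The target inequality would then follow from a chain
\[
\inf_{\normA{\psi}=1}\normA{H\psi}^{2}\ \geq\ c_A(H)^{2}\ \geq\ \frac{w_A^{2}(H)}{\normA{H}},
\]
i.e.\ from an auxiliary estimate $c_A(H)^{2}\normA{H}\geq w_A^{2}(H)$, or more flexibly from a uniform frame-type lower bound $\normA{H\psi}\geq\gamma\normA{\psi}$ with $\gamma^{2}\normA{H}\geq w_A^{2}(H)$.

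The step I expect to be the real obstacle is exactly this last one. The infimum of the Rayleigh-type quotient is controlled by the smallest ``$A$-singular value'' of $H$, whereas $w_A^{2}(H)/\normA{H}$ is of the order of $\normA{H}$ — indeed it equals $\normA{H}$ whenever $H$ is $A$-selfadjoint (as a physical Hamiltonian is, once one also asks $H\in\b_A(\h)$), since then $w_A(H)=\normA{H}$. So unless $H$ carries extra structure forcing a matching lower bound — e.g.\ $H$ a scalar multiple of an $A$-isometry, or a hypothesis that $H$ is bounded below in the $A$-seminorm by the explicit constant $w_A(H)/\normA{H}^{1/2}$ — the chain above cannot close, and in general the left-hand side can be strictly smaller than the right-hand side. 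My plan would therefore be to either (a) prove the statement under such an added lower-bound hypothesis, carrying the computation above through verbatim, or (b) if no extra hypothesis is intended, weaken the right-hand side to $c_A^{2}(H)$, for which the Cauchy--Schwarz step already yields a complete proof, and retain the displayed $w_A$-bound only in the $A$-normal case.
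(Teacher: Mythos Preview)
Your diagnosis is sound. The paper's own argument is a single sentence: ``This follows directly from Theorem~\ref{Theorem3.20} by considering the spectral decomposition of $H$ and the variational principle.'' No further detail is supplied. Theorem~\ref{Theorem3.20} is the upper bound
\[
w_A^{2r}(T)\leq \Bigl(\tfrac{1+\alpha}{4}\Bigr)\normA{(\Ta T)^r+(T\Ta)^r}+\Bigl(\tfrac{1-\alpha}{2}\Bigr)w_A^r(T^2),
\]
which, even at $r=1$, controls $w_A^2(H)$ from above by quantities of size $\normA{H}^2$; it gives no lower bound on the Rayleigh-quotient infimum. The appeal to spectral decomposition and the variational principle does not close this gap either: as you correctly observe, the variational infimum is governed by the bottom of the $A$-spectrum of $H^{\sharp_A}H$, whereas for $A$-selfadjoint $H$ one has $w_A(H)=\normA{H}$ and the right-hand side collapses to $\normA{H}$. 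The toy example $A=I$, $H=\mathrm{diag}(0,1)$ already gives left side $0$ and right side $1$, so the inequality fails as stated.

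Your alternatives~(a)/(b) are therefore the honest resolution: either impose a coercivity hypothesis on $H$ (a uniform lower $A$-bound of the explicit size $w_A(H)/\normA{H}^{1/2}$) strong enough to make the chain close, or retreat to the Cauchy--Schwarz bound $\inf_{\normA{\psi}=1}\normA{H\psi}^{2}\geq c_A(H)^{2}$, which you already prove completely. The paper's one-line reference does not supply the missing ingredient, and your reduction of the left side to $c_A(H^{\sharp_A}H)$ is the cleaner and more informative framework.
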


\begin{proof}
This follows directly from Theorem \ref{Theorem3.20} by considering the spectral decomposition of $H$ and the variational principle.
\end{proof}

\subsubsection{Semi-Hilbertian Structure}
Consider a quantum system described by wavefunctions $\psi \in L^2(\mathbb{R}^3)$. We define the \textit{A-inner product}:
\begin{equation}
\langle \psi, \phi \rangle_A = \int_{\mathbb{R}^3} \overline{\psi(x)}A(x)\phi(x)dx
\end{equation}
where $A(x)$ is a positive definite matrix-valued function representing physical parameters.

\begin{definition}
The \textit{A-norm} is given by:
\begin{equation}
\|\psi\|_A = \sqrt{\langle \psi, \psi \rangle_A}
\end{equation}
\end{definition}

\subsubsection{Anisotropic Schr\"odinger Operator}
The generalized Hamiltonian takes the form:
\begin{equation}
H_A = -\frac{\hbar^2}{2}\nabla \cdot (A(x)\nabla) + V(x)
\end{equation}

\begin{theorem}[A-Norm Bounds]
The Hamiltonian satisfies:
\begin{equation}
\frac{1}{2}\|H_A\|_A \leq w_A(H_A) \leq \|H_A\|_A
\end{equation}
where $w_A(H_A)$ is the A-numerical radius.
\end{theorem}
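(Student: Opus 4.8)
The plan is to recognize that this statement is nothing more than the general two-sided bound (\ref{equiv-Num-norm}), which relates the $A$-numerical radius to the $A$-operator seminorm, specialized to $T = H_A$; so the real work is to verify that $H_A$ belongs (in a suitable sense) to the class $\b_A(\h)$ — or at least to $\b_{A^{1/2}}(\h)$, where (\ref{equiv-Num-norm}) still applies via the norm formula (\ref{Ineq.A2}) — on the relevant domain, after which the inequality is immediate.

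First I would fix the functional-analytic framework: take $\h = L^2(\R^3)$, let $A$ be the bounded positive multiplication operator by the matrix-valued function $A(x)$, assumed uniformly bounded and uniformly positive definite, and interpret $H_A = -\tfrac{\hbar^2}{2}\nabla\cdot(A(x)\nabla) + V(x)$ through its associated sesquilinear form on $H^1(\R^3)$ with $V$ bounded (or at least form-bounded). Integration by parts on the core $C_c^\infty(\R^3)$ gives $\seqA{H_A\psi,\phi} = \seqA{\psi,H_A\phi}$, so $H_A$ is $A$-symmetric; together with the uniform ellipticity of $A(x)$ and the coefficient bounds this yields an estimate $\normA{H_A\psi}\leq c\,\normA{\psi}$ on a suitable Sobolev scale (or after a spectral cutoff of $H_A$), which places $H_A$ in $\b_A(\h)$ in the precise sense needed for the definitions (\ref{Num-A}) of $w_A(H_A)$ and of $\normA{H_A}$ to be meaningful and finite.

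Second, with $H_A\in\b_A(\h)$ established, I would simply invoke (\ref{equiv-Num-norm}) with $T = H_A$ to obtain $\tfrac12\normA{H_A}\leq w_A(H_A)\leq \normA{H_A}$, which is exactly the claimed chain. Here the upper bound $w_A(H_A)\leq \normA{H_A}$ is elementary, following at once from (\ref{Num-A}) and the Cauchy--Schwarz inequality for the semi-inner product $\seqA{\cdot,\cdot}$; the substantive half is the lower bound, and it is precisely the first inequality in (\ref{equiv-Num-norm}) (equivalently, the equivalence of $w_A(\cdot)$ with the $A$-operator seminorm established in \cite{Zamani-2}).

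The main obstacle is not the inequality but the domain bookkeeping: $H_A$ is an unbounded differential operator, so one must make precise in what sense it ``lies in $\b_A(\h)$'' — either by restricting throughout to the dense invariant core $C_c^\infty(\R^3)$, on which every quantity in sight is finite and the $A$-seminorm estimate holds, or by first passing to the bounded setting via the resolvents or spectral projections of $H_A$ and then taking a limit. Once a consistent interpretation of $\normA{H_A}$ and $w_A(H_A)$ is adopted, the two inequalities transfer verbatim from (\ref{equiv-Num-norm}), completing the proof.
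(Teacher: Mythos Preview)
Your identification is correct: the statement is simply the general equivalence (\ref{equiv-Num-norm}) specialized to $T=H_A$, and the paper itself offers no proof of this theorem in the applications section --- it is stated bare, with the evident intent that the reader recognize it as an instance of (\ref{equiv-Num-norm}). So your core approach matches exactly what the paper (implicitly) does.

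Where you go further than the paper is in worrying about whether $H_A$, an unbounded differential operator, actually belongs to $\b_A(\h)$ so that (\ref{equiv-Num-norm}) applies. The paper ignores this issue entirely; it simply writes down the inequality as if $H_A$ were $A$-bounded. Your discussion of cores, resolvents, and spectral cutoffs is therefore more honest than the source, though it is also where the genuine difficulty lies: for a second-order divergence-form operator on $L^2(\R^3)$ with $A$ a bounded multiplication operator, there is no reason to expect $\normA{H_A\psi}\leq c\,\normA{\psi}$ on any dense domain unless one restricts to a bounded spectral piece of $H_A$. In that sense the theorem as stated in the paper is formal, and your caveats about ``a consistent interpretation of $\normA{H_A}$ and $w_A(H_A)$'' are exactly the right ones to flag --- but since the paper does not resolve them either, your proposal is at least as complete as the original.
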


\subsubsection{Time Evolution of Quantum States}
The time evolution operator $U(t) = e^{-iHt/\hbar}$ satisfies important $A$-norm properties:

\begin{proposition}
For any quantum state $\psi\in\mathcal{H}$ and positive operator $A$:
\begin{equation}
    \|U(t)\psi\|_A \leq \exp\left(\frac{t}{\hbar}w_A(H)\right)\|\psi\|_A
\end{equation}
\end{proposition}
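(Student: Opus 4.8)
The plan is to reduce the inequality to a scalar differential inequality for $f(t) := \normA{U(t)\psi}^2 = \seq{AU(t)\psi,U(t)\psi}$ and then apply Gr\"onwall's lemma. A preliminary reduction: since $A$ is bounded one has $\normA{\cdot}\le\norm{A}^{1/2}\norm{\cdot}$, so the strong continuity of $t\mapsto U(t)\psi$ passes to the $A$-seminorm, and it suffices to establish the bound for $\psi\in D(H)$ and then extend by density (this reduction is vacuous if $H$ is assumed bounded and lying in $\b_A(\h)$, which is the natural hypothesis guaranteeing that $w_A(H)$ is finite).

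For $\psi\in D(H)$ write $\phi(t):=U(t)\psi$, so that $\phi(t)\in D(H)$ and $\phi'(t)=-\tfrac{i}{\hbar}H\phi(t)$. Differentiating $f$ and using that $A=A^{*}$, one computes
\begin{align*}
f'(t) &= \seq{A\phi'(t),\phi(t)}+\seq{A\phi(t),\phi'(t)}
= -\tfrac{i}{\hbar}\seq{AH\phi(t),\phi(t)}+\tfrac{i}{\hbar}\,\overline{\seq{AH\phi(t),\phi(t)}}\\
&= \tfrac{2}{\hbar}\,\mathrm{Im}\,\seqA{H\phi(t),\phi(t)} .
\end{align*}
Hence, invoking the defining inequality $\abs{\seqA{Hx,x}}\le w_A(H)\normA{x}^2$ coming from (\ref{Num-A}) (valid for every $x$ by homogeneity, the degenerate case $\normA{x}=0$ being covered by Cauchy--Schwarz for the semi-inner product), we obtain
\[
\abs{f'(t)}\le \tfrac{2}{\hbar}\,\abs{\seqA{H\phi(t),\phi(t)}}\le \tfrac{2}{\hbar}\,w_A(H)\,\normA{\phi(t)}^2 = \tfrac{2}{\hbar}\,w_A(H)\,f(t).
\]

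In particular $f'(t)\le \tfrac{2}{\hbar}w_A(H)f(t)$, so $\tfrac{d}{dt}\bra{e^{-2tw_A(H)/\hbar}f(t)}\le 0$, which yields $f(t)\le e^{2tw_A(H)/\hbar}f(0)$ for $t\ge 0$; taking square roots gives $\normA{U(t)\psi}\le e^{tw_A(H)/\hbar}\normA{\psi}$, the asserted estimate. The main obstacle is not the estimate itself but the analytic bookkeeping when $H$ is unbounded: one must check that $U(t)$ leaves $D(H)$ invariant, that $\phi$ is differentiable with $\phi'=-\tfrac{i}{\hbar}H\phi$, and that the product rule applies to the possibly degenerate form $\seq{A\cdot,\cdot}$ — all standard on $D(H)$ — and one must assume $H$ admits an $A$-adjoint so that $w_A(H)$ is finite. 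If $\normA{\psi}=0$ the bound is trivial since then $f(0)=0$ forces $f\equiv 0$; alternatively one normalizes to $\normA{\psi}=1$ and rescales.
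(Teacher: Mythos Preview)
The paper states this proposition without proof, so there is nothing to compare against directly. Your argument is correct and is the standard route: differentiate $f(t)=\normA{U(t)\psi}^2$, obtain $f'(t)=\tfrac{2}{\hbar}\,\mathrm{Im}\,\seqA{H\phi(t),\phi(t)}$, bound this by $\tfrac{2}{\hbar}w_A(H)f(t)$ via the definition of $w_A$, and integrate the resulting differential inequality. The domain and density remarks you include are the appropriate technical caveats when $H$ is unbounded, and your observation that the estimate is for $t\ge 0$ is a restriction the paper leaves implicit.
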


\begin{theorem}[Quantum Fidelity]
For any two states:
\begin{equation}
1 - F_A(\psi(t),\phi(t)) \leq \frac{2w_A(H_A)}{\hbar}|t|
\end{equation}
where $F_A$ is the A-fidelity.
\end{theorem}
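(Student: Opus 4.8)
The plan is to read $F_A(\psi(t),\phi(t))$ as the $A$-fidelity $\abs{\seqA{\psi(t),\phi(t)}}$ between an $A$-normalized state $\psi$ propagated by $U(t)=e^{-iH_At/\hbar}$, i.e. $\psi(t)=U(t)\psi$, and the same state held fixed, $\phi(t)\equiv\psi$; this is the only reading under which the bound can hold at $t=0$ (where it forces $F_A=1$, so $\psi(0)=\phi(0)$), and it is the usual survival/Loschmidt set-up. The quantity to be controlled is therefore $1-\abs{\seqA{U(t)\psi,\psi}}$. I would assume, as is standard for a Hamiltonian, that $H_A\in\b_A(\h)$ is $A$-selfadjoint, pass to the completion $\h_A$ of $\bra{\h,\seqA{\cdot,\cdot}}$ modulo $\n(A)$, in which $H_A$ induces a genuine bounded selfadjoint operator $\widetilde H$ with $\norm{\widetilde H}=\normA{H_A}$, and use the Borel functional calculus there; under the canonical isometry $\h\to\h_A$ one has $U(t)^{\sharp_A}\leftrightarrow e^{i\widetilde Ht/\hbar}$, $U(t)$ is $A$-unitary, and $\normA{U(t)\psi}=\normA{\psi}=1$. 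The variant in which $\psi$ evolves under two Hamiltonians $H_1,H_2$ is handled verbatim after replacing $H_A$ by $H_2-H_1$ when these commute.

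First I would pass from modulus to real part: since $\abs{\seqA{U(t)\psi,\psi}}\le\normA{U(t)\psi}\normA{\psi}=1$ and $\mathrm{Re}\,z\le\abs z$,
\[
1-F_A(\psi(t),\phi(t))\;\le\;1-\mathrm{Re}\,\seqA{U(t)\psi,\psi}\;=\;\seqA{\bra{I-\mathrm{Re}_A U(t)}\psi,\psi},
\]
and by the functional calculus $\mathrm{Re}_A U(t)=\tfrac12\bra{U(t)+U(t)^{\sharp_A}}$ corresponds to $\cos(\widetilde Ht/\hbar)$, so the right-hand side equals $\seqA{\bra{I-\cos(\widetilde Ht/\hbar)}\psi,\psi}\ge 0$.

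Next I would invoke the elementary scalar inequality
\[
0\le 1-\cos\theta=2\sin^2(\theta/2)\le\abs{\theta}\qquad(\theta\in\R),
\]
verified by cases ($\theta^2\le\abs\theta$ when $\abs\theta\le1$; $1-\cos\theta\le2\le\abs\theta$ when $\abs\theta\ge1$), which by the spectral theorem lifts to the operator inequality $0\le I-\cos(\widetilde Ht/\hbar)\le\tfrac{\abs t}{\hbar}\abs{\widetilde H}$ in $\h_A$. Testing against $\psi$ and bounding the quadratic form of $\abs{H_A}$ by its norm (equivalently, by Cauchy--Schwarz for $\seqA{\cdot,\cdot}$, paralleling the Hölder--McCarthy estimates used earlier),
\[
1-F_A(\psi(t),\phi(t))\le\frac{\abs t}{\hbar}\,\seqA{\abs{H_A}\psi,\psi}\le\frac{\abs t}{\hbar}\,\normA{\abs{H_A}\,\psi}\normA{\psi}=\frac{\abs t}{\hbar}\,\normA{H_A\psi}\le\frac{\abs t}{\hbar}\,\normA{H_A}.
\]
Finally I would apply the norm--numerical-radius equivalence \eqref{equiv-Num-norm}, namely $\normA{H_A}\le 2w_A(H_A)$, to conclude $1-F_A(\psi(t),\phi(t))\le\frac{2w_A(H_A)}{\hbar}\abs t$, as claimed; if in addition $H_A$ is $A$-positive then $\abs{H_A}=H_A$ and $\seqA{H_A\psi,\psi}\le w_A(H_A)$, giving the sharper $\frac{w_A(H_A)}{\hbar}\abs t$.

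The main obstacle is not the (short) chain of estimates but the bookkeeping that legitimizes it in the semi-Hilbertian setting: fixing the meaning of $F_A$ and of ``two states'' so the statement is not the vacuous constancy of the overlap under an $A$-unitary flow, and justifying the passage to $\h_A$, the identification of $\mathrm{Re}_A U(t)$ with $\cos(\widetilde Ht/\hbar)$, and the positivity and domination $I-\cos(\widetilde Ht/\hbar)\le\tfrac{\abs t}{\hbar}\abs{\widetilde H}$. If $H_A$ is genuinely unbounded (as the anisotropic Schrödinger operator of the preceding subsection), the same computation runs on a core, but then $\normA{H_A}$ must be kept as $\normA{H_A\psi}$ and the last step recast as a bound in terms of the energy uncertainty; stating the theorem for $H_A\in\b_A(\h)$ keeps everything within the paper's standing hypotheses.
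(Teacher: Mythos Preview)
The paper does not supply a proof of this theorem; it is stated bare in the applications section with no accompanying argument, so there is no ``paper's own proof'' against which to benchmark your proposal.

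That said, your strategy is a sensible reconstruction. The statement as written is underspecified (neither $F_A$ nor the dynamics of the ``two states'' is defined in the paper), and you are right that if both states evolve under the same $A$-unitary flow the $A$-overlap is constant and the inequality is trivial. Your reading---one state propagated by $e^{-iH_At/\hbar}$, the other held fixed, or equivalently two commuting Hamiltonians---is the natural one that makes the bound nonvacuous and consistent with $F_A=1$ at $t=0$. The chain $1-\abs{\seqA{U(t)\psi,\psi}}\le 1-\mathrm{Re}\,\seqA{U(t)\psi,\psi}=\seqA{(I-\cos(\widetilde Ht/\hbar))\psi,\psi}\le\frac{\abs t}{\hbar}\normA{H_A}\le\frac{2\abs t}{\hbar}w_A(H_A)$ is correct once one passes to the completion $\h_A$ and invokes the spectral calculus there, and the final step is exactly the equivalence \eqref{equiv-Num-norm}.

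The genuine soft spot you identify---the semi-Hilbertian bookkeeping (well-definedness of $e^{-iH_At/\hbar}$, identification of $\mathrm{Re}_A U(t)$ with $\cos(\widetilde Ht/\hbar)$, and the passage to $\h_A$)---is real, and the paper offers no guidance on it since the whole applications section is heuristic. Your caveat about the unbounded case (the anisotropic Schr\"odinger operator of the same subsection) is also well taken: the uniform bound $\normA{H_A}$ is then unavailable and one must keep the state-dependent $\normA{H_A\psi}$, which the paper's formulation obscures.
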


\subsubsection{Perturbation Theory for Quantum Systems}
Consider a quantum system with Hamiltonian $H = H_0 + \lambda V$, where $H_0$ is the unperturbed Hamiltonian and $V$ is a perturbation. Our $A$-norm inequalities provide refined perturbation estimates:

\begin{theorem}
The first-order energy correction satisfies:
\begin{equation}
    |E_n^{(1)}| \leq w_A(V)
\end{equation}
where $w_A(V)$ is the $A$-numerical radius of the perturbation.
\end{theorem}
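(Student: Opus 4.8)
The plan is to identify the first-order energy correction with the $A$-expectation value $\seqA{V\psi,\psi}$ of the perturbation in the (suitably normalized) unperturbed eigenstate, and then to read off the bound directly from the definition~(\ref{Num-A}) of the $A$-numerical radius.

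First I would set up the Rayleigh--Schr\"odinger expansion in the semi-Hilbertian framework. Assume $H_0\in\b_A(\h)$ is $A$-selfadjoint with a simple eigenvalue $E_n^{(0)}$ and eigenvector $\psi_n^{(0)}$ chosen so that $\normA{\psi_n^{(0)}}=1$ (which requires $\psi_n^{(0)}\notin\ker A$, part of the standing hypotheses). Writing $E_n(\lambda)=\sum_{k\ge0}\lambda^k E_n^{(k)}$ and $\psi_n(\lambda)=\sum_{k\ge0}\lambda^k\psi_n^{(k)}$, substituting into $(H_0+\lambda V)\psi_n(\lambda)=E_n(\lambda)\psi_n(\lambda)$ and equating coefficients of $\lambda$ gives $(H_0-E_n^{(0)})\psi_n^{(1)}+V\psi_n^{(0)}=E_n^{(1)}\psi_n^{(0)}$. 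Taking the $A$-inner product of this relation with $\psi_n^{(0)}$ and using $A$-selfadjointness, namely $\seqA{(H_0-E_n^{(0)})\psi_n^{(1)},\psi_n^{(0)}}=\seqA{\psi_n^{(1)},(H_0-E_n^{(0)})\psi_n^{(0)}}=0$, reduces the identity to the usual first-order formula $E_n^{(1)}=\seqA{V\psi_n^{(0)},\psi_n^{(0)}}$.

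The conclusion is then immediate: since $\normA{\psi_n^{(0)}}=1$, one has
\[ \abs{E_n^{(1)}}=\abs{\seqA{V\psi_n^{(0)},\psi_n^{(0)}}}\le\sup_{\normA{x}=1}\abs{\seqA{Vx,x}}=w_A(V). \]
In the degenerate case one first diagonalizes $V$ restricted to the finite-dimensional $A$-eigenspace associated with $E_n^{(0)}$; its diagonal entries are again of the form $\seqA{Vx,x}$ with $\normA{x}=1$, so the same bound persists.

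The main obstacle is not the final estimate — which is a one-line consequence of~(\ref{Num-A}) — but justifying the perturbative setup itself: one must guarantee that $H_0$ admits $A$-normalizable eigenvectors (so that the $A$-seminorm does not collapse on the relevant spectral subspace) and that the formal series can be truncated at first order with the standard intermediate-normalization convention $\seqA{\psi_n^{(0)},\psi_n^{(k)}}=0$ for $k\ge1$, which is precisely what makes the cross term vanish. These are best treated as explicit hypotheses of the statement rather than as genuine obstructions to the proof.
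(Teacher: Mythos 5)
Your argument is correct, and in fact it supplies more than the paper does: the paper states this theorem in its applications section with no proof at all (the surrounding results in that section are likewise asserted without argument, or with one-line references to earlier theorems). Your route is the standard one and is the right one here: derive $E_n^{(1)}=\seqA{V\psi_n^{(0)},\psi_n^{(0)}}$ from the first-order Rayleigh--Schr\"odinger equation, kill the cross term via $A$-selfadjointness of $H_0$ (which, as you note, forces $E_n^{(0)}$ to be real and makes $\seqA{\psi_n^{(1)},(H_0-E_n^{(0)})\psi_n^{(0)}}=0$), and then read off $\abs{E_n^{(1)}}\le w_A(V)$ from the definition~(\ref{Num-A}) applied to the $A$-normalized eigenvector. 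One small correction: the vanishing of the cross term comes from $(H_0-E_n^{(0)})\psi_n^{(0)}=0$ alone, not from the intermediate-normalization convention $\seqA{\psi_n^{(0)},\psi_n^{(k)}}=0$; that convention is only needed to keep $\psi_n(\lambda)$ normalized to first order. Your caveats about requiring $\psi_n^{(0)}\notin\ker A$ and the formal nature of the expansion are well placed --- these hypotheses are indeed absent from the paper's statement and would need to be made explicit for the result to be rigorous.
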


\subsubsection{Quantum Measurement Theory}
The $A$-numerical radius provides a natural framework for analyzing quantum measurements:

\begin{theorem}[Generalized Uncertainty Principle]
For any two observables $X,Y\in\mathcal{B}_A(\mathcal{H})$:
\begin{equation}
    w_A([X,Y]) \leq 2\sqrt{w_A(X^2)w_A(Y^2)}
\end{equation}
\end{theorem}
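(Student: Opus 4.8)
The plan is to use the defining property that the observables $X,Y\in\b_A(\h)$ are $A$-selfadjoint, i.e. $X^{\sharp_A}=X$ and $Y^{\sharp_A}=Y$, and to reduce everything to a single pointwise estimate. Two preliminaries set this up. First, since $\b_A(\h)$ is an algebra we have $X^2,Y^2\in\b_A(\h)$, and because $X^2=X^{\sharp_A}X$ and $Y^2=Y^{\sharp_A}Y$ these operators are $A$-positive; hence, by the formula for the $A$-norm of an $A$-positive operator recalled in the Introduction, $w_A(X^2)=\normA{X^2}=\sup_{\normA{x}=1}\seqA{X^2x,x}$ and similarly for $Y^2$, so in particular $\seqA{X^2x,x}\le w_A(X^2)$ and $\seqA{Y^2x,x}\le w_A(Y^2)$ for every unit vector $x$. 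Second, for any $x$ with $\normA{x}=1$, using $A$-selfadjointness,
\[
\seqA{[X,Y]x,x}=\seqA{Yx,X^{\sharp_A}x}-\seqA{Xx,Y^{\sharp_A}x}=\seqA{Yx,Xx}-\overline{\seqA{Yx,Xx}}=2i\,\ImA\!\bra{\seqA{Yx,Xx}}.
\]

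From here the argument is short. For a fixed unit vector $x$, the displayed identity together with the Cauchy--Schwarz inequality for the semi-inner product $\seqA{\cdot,\cdot}$ gives $\abs{\seqA{[X,Y]x,x}}\le 2\abs{\seqA{Yx,Xx}}\le 2\,\normA{Yx}\,\normA{Xx}$. Next I would convert each seminorm into an $A$-numerical radius of a square, again via $A$-selfadjointness: $\normA{Xx}^2=\seqA{Xx,Xx}=\seqA{X^{\sharp_A}Xx,x}=\seqA{X^2x,x}\le w_A(X^2)$, and symmetrically $\normA{Yx}^2\le w_A(Y^2)$. Substituting yields $\abs{\seqA{[X,Y]x,x}}\le 2\sqrt{w_A(X^2)}\sqrt{w_A(Y^2)}$, and taking the supremum over all $x$ with $\normA{x}=1$ gives $w_A([X,Y])\le 2\sqrt{w_A(X^2)\,w_A(Y^2)}$.

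I do not anticipate a serious obstacle: the whole content sits in the two identities above, and the single place where the $A$-selfadjointness of $X$ and $Y$ is essential is the passage from $\normA{Xx}^2=\seqA{X^{\sharp_A}Xx,x}$ to $\seqA{X^2x,x}$ — for a general $T\in\b_A(\h)$ one only obtains $\normA{Tx}^2=\seqA{T^{\sharp_A}Tx,x}\le\normA{T}^2$, which would produce the weaker bound $2\normA{X}\normA{Y}$. It is worth closing with a remark that, combined with the power inequality \eqref{Power} (so $w_A(X^2)\le w_A^2(X)$ and $w_A(Y^2)\le w_A^2(Y)$), the result implies the coarser commutator estimate $w_A([X,Y])\le 2w_A(X)w_A(Y)$, placing it in the same family as \eqref{product-2}.
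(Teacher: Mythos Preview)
The paper states this theorem in the applications section (\S4.1.4) without supplying a proof, so there is nothing in the text to compare against. Your argument is correct and is the natural one: the pointwise identity $\seqA{[X,Y]x,x}=2i\,\ImA\!\bra{\seqA{Yx,Xx}}$, Cauchy--Schwarz for $\seqA{\cdot,\cdot}$, and then $\normA{Xx}^2=\seqA{X^2x,x}\le w_A(X^2)$ give the bound directly after taking the supremum.

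One terminological caveat worth fixing: you open by equating ``$A$-selfadjoint'' with $X^{\sharp_A}=X$, but the Introduction explicitly warns these differ---$A$-selfadjoint means $AX=X^*A$, and the paper exhibits an $A$-selfadjoint $T$ with $\Ta\neq T$. This does not harm your proof, because the only place you invoke the hypothesis is the step $\normA{Xx}^2=\seqA{Xx,Xx}=\seqA{X^2x,x}$, and that follows already from $AX=X^*A$ (compute $\seqA{Xx,Xx}=\seq{AXx,Xx}=\seq{X^*AXx,x}=\seq{AX^2x,x}=\seqA{X^2x,x}$). So the argument goes through under the paper's weaker notion of $A$-selfadjointness; you should simply rephrase the opening sentence and drop the intermediate appeal to $X^{\sharp_A}X$.
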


\begin{lemma}
The position-momentum commutator satisfies:
\begin{equation}
w_A([X_j, P_k]) \leq \hbar \delta_{jk}(1 + \|A\|_A)
\end{equation}
where $P_k = -i\hbar(\nabla_A)_k$.
\end{lemma}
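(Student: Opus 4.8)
The plan is to compute $[X_j,P_k]$ explicitly, reduce it to the canonical commutation relation plus a weight-dependent correction term, and then bound the $A$-numerical radius via \eqref{equiv-Num-norm} together with the elementary subadditivity of $w_A$.

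First I would work on the core of smooth, compactly supported wavefunctions, on which the coordinate-multiplication operator $X_j$ and the gradient act classically, and record the elementary identity $[X_j,\partial_k]\psi = -\delta_{jk}\psi$. Writing $(\nabla_A)_k = \partial_k + \Gamma_k$, where $\Gamma_k$ is the multiplication operator by the matrix-valued logarithmic-derivative term built from $A$ that makes $P_k = -i\hbar(\nabla_A)_k$ symmetric for $\seqA{\cdot,\cdot}$, one obtains
$$[X_j,P_k] = -i\hbar\,[X_j,\partial_k] - i\hbar\,[X_j,\Gamma_k] = i\hbar\,\delta_{jk} I - i\hbar\,[X_j,\Gamma_k].$$
Since $X_j$ is scalar multiplication and $\Gamma_k$ is (matrix) multiplication, the two commute pointwise, so $[X_j,\Gamma_k]=0$; in particular $[X_j,P_k]=0$ whenever $j\neq k$, and the asserted inequality holds trivially because both sides vanish.

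For $j=k$ we are left with $[X_k,P_k] = i\hbar I$, so, using $w_A(I)=1$ (immediate from \eqref{Num-A}) and the right-hand inequality of \eqref{equiv-Num-norm},
$$w_A\bigl([X_k,P_k]\bigr) = \hbar\, w_A(I) = \hbar \le \hbar\bigl(1+\normA{A}\bigr),$$
since $A\ge 0$ forces $\normA{A}\ge 0$. This proves the bound for all $j,k$. For a weight convention in which $\Gamma_k$ need not commute exactly with $X_j$, the same argument together with subadditivity of $w_A$ gives $w_A([X_k,P_k]) \le \hbar\, w_A(I) + \hbar\,\normA{[X_k,\Gamma_k]}$, and one estimates $\normA{[X_k,\Gamma_k]} \le \normA{A}$ from the pointwise bound on $\Gamma_k$; this is precisely the role of the factor $1+\normA{A}$.

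The main obstacle is not a single computation but making the operator-theoretic setup rigorous: one must verify that $P_k=-i\hbar(\nabla_A)_k$ genuinely lies in $\b_A(\h)$ with a well-defined $A$-adjoint, so that $w_A([X_j,P_k])$ is meaningful in the first place, and that the unboundedness of $X_j$ and $P_k$ on $L^2(\mathbb{R}^3)$ together with the restriction to a common dense core does not obstruct passing to the supremum defining the $A$-numerical radius. Once the commutator is legitimately an element of $\b_A(\h)$, the estimate is immediate from the three ingredients above — the canonical commutation relation, the normalization $w_A(I)=1$, and $w_A(\cdot)\le\normA{\cdot}$.
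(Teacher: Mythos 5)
The paper offers no proof of this lemma: it is one of several assertions stated without argument in the applications section (Section~4), and $(\nabla_A)_k$ is never defined there, so there is no official derivation to compare yours against. Taken on its own terms, your reconstruction is sound in its core: writing $(\nabla_A)_k=\partial_k+\Gamma_k$ with $\Gamma_k$ a multiplication operator, the commutator collapses to $[X_j,P_k]=i\hbar\,\delta_{jk}I$ on a core of smooth compactly supported functions, and since $w_A(I)=1$ and $\normA{A}\ge 0$ the stated bound $\hbar\,\delta_{jk}(1+\normA{A})$ is satisfied with room to spare (indeed the factor $1+\normA{A}$ plays no role in your main line of argument, which already suggests the lemma as stated is not sharp and that its intended content depends on an unstated convention for $\nabla_A$).

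Two caveats keep this from being a complete proof. First, the gap you yourself flag is real and is not resolved: $X_j$ and $P_k$ are unbounded on $L^2(\mathbb{R}^3)$, hence lie outside $\b_A(\h)$, and every tool of the paper --- the definition \eqref{Num-A} of $w_A$, the equivalence \eqref{equiv-Num-norm}, subadditivity --- is established only for operators in $\b_A(\h)$. Your argument works because the commutator itself extends to the bounded operator $i\hbar\,\delta_{jk}I$, but you should say explicitly that $w_A([X_j,P_k])$ is being interpreted as the $A$-numerical radius of that bounded closure, since otherwise the left-hand side is undefined in the paper's framework. Second, your fallback estimate $\normA{[X_k,\Gamma_k]}\le\normA{A}$ for a non-commuting weight convention is asserted without justification; there is no pointwise bound relating a logarithmic derivative of $A$ to $\normA{A}$, so that branch of the argument should either be dropped (it is unnecessary when $\Gamma_k$ is multiplication, as in your main case) or given an actual proof.
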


\subsubsection{Example Applications}
\begin{itemize}
  \item Quantum Harmonic Oscillator:Consider the harmonic oscillator Hamiltonian $H = \frac{p^2}{2m} + \frac{1}{2}m\omega^2x^2$ with $A = e^{-\beta H}$ (Boltzmann weight):
\begin{equation}
    w_A(H) \leq \frac{\hbar\omega}{2}\coth\left(\frac{\beta\hbar\omega}{2}\right)
\end{equation}
  \item Anisotropic Harmonic Oscillator:For $V(x) = \frac{1}{2}x^T K x$:
\begin{equation}
w_A(H_A) = \frac{\hbar}{2}\text{Tr}(\sqrt{AK})
\end{equation}
  \item Dirac Particles:The modified Dirac operator:
\begin{equation}
D_A = -i\hbar c \alpha \cdot \nabla_A + \beta mc^2
\end{equation}
satisfies:
\begin{equation}
w_A(D_A) \geq mc^2
\end{equation}
\end{itemize}
In summary, the $A$-norm and $A$-numerical radius inequalities developed in this paper offer:
\begin{itemize}
\item Improved spectral bounds for quantum systems
\item Tighter estimates for time evolution in constrained spaces
\item Refined perturbation theory results
\item Generalized uncertainty relations incorporating measurement apparatus
\item Rigorous treatment of anisotropic media
\end{itemize}
The developed techniques provide improved spectral bounds, evolution estimates, and generalized uncertainty relations for quantum mechanical systems.

\subsection{Applications to Nonlocal Nonlinear Elliptic Problems}

\subsubsection{Nonlocal Problem Formulation}
Consider the nonlocal nonlinear elliptic boundary value problem:
\begin{equation}\label{eq:main_prob}
\begin{cases}
(-\Delta)^s u + A(x)u = f(x,u) + \displaystyle\int_\Omega K(x,y)|u(y)|^{p-2}u(y)dy & \text{in } \Omega \\
u = 0 & \text{in } \mathbb{R}^n\setminus\Omega
\end{cases}
\end{equation}
where:
\begin{itemize}
\item $(-\Delta)^s$ ($0<s<1$) is the fractional Laplacian
\item $A(x) \geq a_0 > 0$ is a potential function
\item $K:\Omega\times\Omega\to\mathbb{R}$ is a symmetric kernel satisfying integrability conditions
\item $f:\Omega\times\mathbb{R}\to\mathbb{R}$ is a Carath\'eodory function with subcritical growth
\end{itemize}

\subsubsection{ Functional Framework}
\subsubsection{ Fractional Sobolev Space}
Define the solution space:
\begin{equation}\label{eq:solution_space}
X = \left\{ u \in H^s(\mathbb{R}^n) : u=0 \text{ in } \mathbb{R}^n\setminus\Omega \right\}
\end{equation}
with norm:
\begin{equation}\label{eq:X_norm}
\|u\|_X = \left( \int_{\mathbb{R}^{2n}} \frac{|u(x)-u(y)|^2}{|x-y|^{n+2s}}dxdy + \int_\Omega A(x)|u(x)|^2dx \right)^{1/2}
\end{equation}

\subsubsection{ A-Norm Formulation}
The A-norm is given by:
\begin{equation}\label{eq:A_norm}
\|u\|_A = \sqrt{ \langle (-\Delta)^s u + A(x)u, u \rangle } = \|u\|_X
\end{equation}

\subsubsection{ Existence Results}
\subsubsection{ Variational Structure}
The energy functional is:
\begin{equation}\label{eq:energy_functional}
J(u) = \frac{1}{2}\|u\|_A^2 - \int_\Omega F(x,u)dx - \frac{1}{2p}\int_{\Omega\times\Omega} K(x,y)|u(x)|^p|u(y)|^p dxdy
\end{equation}
where $F(x,s) = \int_0^s f(x,t)dt$.

\subsubsection{ Critical Point Theory}
\begin{theorem}\label{thm:existence}
Assume:
\begin{enumerate}
\item $f$ satisfies $|f(x,s)| \leq c_1 + c_2|s|^{q-1}$ with $2 < q < 2_s^* = \frac{2n}{n-2s}$
\item $K$ is symmetric and $K \in L^\infty(\Omega\times\Omega)$
\item $\exists \mu > 2$ such that $0 < \mu F(x,s) \leq sf(x,s)$ for $|s|$ large
\end{enumerate}
Then problem \eqref{eq:main_prob} has at least one nontrivial solution.
\end{theorem}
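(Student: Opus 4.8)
The plan is to set up a variational framework in which the weak solutions of \eqref{eq:main_prob} are exactly the critical points of the energy functional $J$ of \eqref{eq:energy_functional} on the Hilbert space $X$ of \eqref{eq:solution_space}, whose norm coincides with the $A$-norm by \eqref{eq:A_norm}, and then to extract a nontrivial critical point via the Mountain Pass Theorem of Ambrosetti--Rabinowitz. First I would verify that $J\in C^1(X,\mathbb{R})$: the quadratic part $\tfrac12\|u\|_A^2$ is smooth; the term $u\mapsto\int_\Omega F(x,u)\,dx$ is $C^1$ with derivative $v\mapsto\int_\Omega f(x,u)v\,dx$ thanks to the subcritical growth in hypothesis (1) and the continuous embeddings $X\hookrightarrow L^t(\Omega)$, $2\le t\le 2_s^\ast$, applied to the Nemytskii operator of $f$; and the nonlocal term is $C^1$ with derivative $v\mapsto\int_{\Omega\times\Omega}K(x,y)|u(x)|^{p-2}u(x)|u(y)|^p v(x)\,dx\,dy$ (using the symmetry of $K$ from hypothesis (2)), since $K\in L^\infty$ and $X\hookrightarrow L^{2p}(\Omega)$. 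Then $J'(u)=0$ is precisely the weak form of \eqref{eq:main_prob}.

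Next I would check the two geometric requirements. For the local one, \eqref{eq:energy_functional} and hypothesis (1) give, after the Sobolev embeddings, $J(u)\ge\tfrac12\|u\|_X^2-C_1\|u\|_X^{q}-C_2\|u\|_X^{2p}$ once the behaviour of $f$ near $s=0$ is controlled (one uses here the customary companion condition $f(x,s)=o(|s|)$ as $s\to0$); since $q>2$ and $2p>2$, this produces $\rho,\alpha>0$ with $J|_{\|u\|_X=\rho}\ge\alpha$. For the second, the Ambrosetti--Rabinowitz inequality (3) integrates to $F(x,s)\ge a|s|^{\mu}-b$ for large $|s|$ with $\mu>2$, so for any fixed $u_0\in X\setminus\{0\}$ one has $J(tu_0)\le\tfrac{t^2}{2}\|u_0\|_X^2-a t^{\mu}\|u_0\|_{L^\mu}^{\mu}+b|\Omega|+\tfrac{t^{2p}}{2p}\|K\|_\infty\|u_0\|_{L^p}^{2p}$; taking $t\to\infty$, the superquadratic term dominates (when $2p\ge\mu$ one also uses $K\ge0$ so that the nonlocal term carries a helpful sign), hence $J(tu_0)\to-\infty$ and some $e=t_0u_0$ satisfies $\|e\|_X>\rho$, $J(e)\le0$.

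The heart of the argument, and the step I expect to be hardest, is the Palais--Smale condition at the minimax level $c$. Given $(u_n)\subset X$ with $J(u_n)\to c$ and $J'(u_n)\to0$, boundedness should follow from $J(u_n)-\tfrac1\mu\langle J'(u_n),u_n\rangle\ge(\tfrac12-\tfrac1\mu)\|u_n\|_X^2-C$, where (3) handles the local part and the residual nonlocal contribution $(\tfrac1\mu-\tfrac1{2p})\int_{\Omega\times\Omega}K|u_n(x)|^p|u_n(y)|^p$ is controlled under the standing assumptions on $K$ and the exponents, so $\mu>2$ forces $\sup_n\|u_n\|_X<\infty$. Passing to a subsequence, $u_n\rightharpoonup u$ in $X$, and the \emph{compact} embeddings $X\hookrightarrow\hookrightarrow L^t(\Omega)$, $2\le t<2_s^\ast$, give strong convergence in $L^q(\Omega)$ and $L^{2p}(\Omega)$ and a.e.\ convergence on $\Omega$. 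The delicate point will be passing to the limit in the nonlinear terms: $\int_\Omega f(x,u_n)(u_n-u)\to0$ by the growth bound and a Vitali argument, while for the nonlocal term one needs an equi-integrable majorant in $L^{(2p)'}(\Omega\times\Omega)$ for $K(x,y)|u_n(x)|^{p-2}u_n(x)|u_n(y)|^p$ (built from $K\in L^\infty$ and the strong $L^{2p}$ convergence, via a Brezis--Lieb-type splitting) to get $\int_{\Omega\times\Omega}K|u_n(x)|^{p-2}u_n(x)|u_n(y)|^p(u_n-u)(x)\to0$; combined with $\langle J'(u_n),u_n-u\rangle\to0$ this forces $\|u_n\|_X^2\to\|u\|_X^2$, and since $X$ is a Hilbert space, $u_n\to u$ strongly.

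Finally, the Mountain Pass Theorem will then yield a critical point $u\in X$ with $J(u)=c\ge\alpha>0=J(0)$, so $u\neq0$, and by the first step $u$ is a nontrivial weak solution of \eqref{eq:main_prob}, which is Theorem~\ref{thm:existence}. Besides the compactness analysis of the nonlocal term, the only subtlety I anticipate is making the hypotheses fully effective — in particular pinning down the relation between $\mu$ and $2p$ (and, if needed, the sign of $K$) so that both the geometry at infinity and the boundedness of Palais--Smale sequences go through.
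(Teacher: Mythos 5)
Your proposal takes exactly the approach the paper announces: weak solutions as critical points of $J$ on $X$, mountain pass geometry, and the Palais--Smale condition. The paper's own ``proof,'' however, is a single sentence (``combines mountain pass lemma with $A$-norm estimates to verify the Palais--Smale condition''), so your sketch is strictly more detailed than anything in the text, and there is no substantive argument there to compare against. The caveats you flag at the end are not just prudence on your part --- they are genuine defects of the theorem as stated: hypothesis (1) controls only the growth of $f$ at infinity, so without the companion condition $f(x,s)=o(|s|)$ as $s\to 0$ the estimate $J|_{\|u\|_X=\rho}\ge\alpha>0$ fails (a linear term $c_1|s|$ near the origin destroys the local minimum at $0$); the exponent $p$ in the nonlocal term is never constrained, so the embedding $X\hookrightarrow L^{2p}(\Omega)$ needed for $J$ to be well defined and $C^1$ is not guaranteed; and with $K$ merely bounded and symmetric (no sign or smallness condition, no relation between $2p$ and $\mu$) the nonlocal contribution to $J(u_n)-\tfrac1\mu\langle J'(u_n),u_n\rangle$, which equals $\bigl(\tfrac1\mu-\tfrac1{2p}\bigr)\int_{\Omega\times\Omega}K|u_n(x)|^p|u_n(y)|^p\,dx\,dy$, can have the wrong sign, so boundedness of Palais--Smale sequences does not follow from the stated assumptions. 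With those supplementary hypotheses added your argument is the standard and correct one; without them neither your sketch nor the paper's one-line proof establishes the theorem as written.
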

\begin{proof}
The proof combines mountain pass lemma with A-norm estimates to verify the Palais-Smale condition.
\end{proof}

\subsubsection{Nonlocal Operator Estimates}
\subsubsection{ Numerical Radius Bounds}
For the nonlocal operator $\mathcal{N}(u) = \int_\Omega K(\cdot,y)|u(y)|^{p-2}u(y)dy$, we have:
\begin{lemma}\label{lem:num_radius}
The A-numerical radius satisfies:
\begin{equation}
w_A(\mathcal{N}) \leq C\|K\|_{L^\infty}\|u\|_A^{2(p-1)}
\end{equation}
\end{lemma}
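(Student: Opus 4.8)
The plan is to reduce the claim to a direct H\"older estimate on a duality pairing, followed by the fractional Sobolev embedding. By \eqref{eq:A_norm} the $A$-inner product on the solution space $X$ of \eqref{eq:solution_space}--\eqref{eq:X_norm} coincides with the $X$-inner product, and by the left--right bound $w_A(\cdot)\le\|\cdot\|_A$ from \eqref{equiv-Num-norm} it suffices to estimate the $A$-operator quantity of $\mathcal{N}$ at the fixed element $u$, i.e. to bound $\sup\{\,|\langle \mathcal{N}(u),v\rangle| : v\in X,\ \|v\|_A=1\,\}$, where $\langle \mathcal{N}(u),v\rangle=\int_\Omega\int_\Omega K(x,y)\,|u(y)|^{p-2}u(y)\,v(x)\,dy\,dx$ is the natural duality pairing. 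A first routine step is to check this pairing is well defined and that Fubini applies: this is immediate from $K\in L^\infty(\Omega\times\Omega)$ (hypothesis (2) of Theorem~\ref{thm:existence}) together with the subcritical growth regime of hypothesis (1), which guarantees $|u|^{p-1}\in L^q(\Omega)$ for a suitable $q$.

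Next I would carry out the core estimate. Pulling $\|K\|_{L^\infty}$ out of the double integral gives $|\langle \mathcal{N}(u),v\rangle|\le \|K\|_{L^\infty}\int_\Omega |v(x)|\Big(\int_\Omega |u(y)|^{p-1}\,dy\Big)dx$, and then applying the Cauchy--Schwarz/H\"older inequality (in $x$, and again in $y$ if one wishes to land in $L^2$-based spaces) bounds this by $\|K\|_{L^\infty}$ times a product of a Lebesgue norm of $|u|^{p-1}$ and a Lebesgue norm of $v$ over the bounded domain $\Omega$. At this point I would invoke the continuous fractional Sobolev embedding $X=(X,\|\cdot\|_A)\hookrightarrow L^r(\Omega)$, valid for every $r\in[1,2_s^*]$ with $2_s^*=\frac{2n}{n-2s}$ since $\Omega$ is bounded, to replace $\|\,|u|^{p-1}\|$ by the corresponding power of $\|u\|_{L^{r}(\Omega)}\le C\|u\|_X=C\|u\|_A$ and to replace the norm of $v$ by $C\|v\|_A=C$. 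Absorbing all embedding constants into $C=C(n,s,p,\Omega)$ and taking the supremum over $\|v\|_A=1$ then yields an inequality of the form $w_A(\mathcal{N})\le C\|K\|_{L^\infty}\|u\|_A^{\,\theta}$ for an explicit exponent $\theta$ dictated by the chosen embedding indices.

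The main obstacle is pinning down the exponent so that it matches the stated $\theta=2(p-1)$, and making precise the sense in which $w_A$ is applied to the \emph{nonlinear} map $\mathcal{N}$. Concretely, one must fix the base point $u$, interpret $\mathcal{N}$ through the induced linear functional (equivalently, its G\^ateaux linearization), and then, when using the quadratic-form definition \eqref{Num-A}, pair $\mathcal{N}(u)$ against $u$ itself rather than against a generic normalized $v$; this self-pairing is the natural mechanism by which the homogeneity degree $p-1$ of $\mathcal{N}$ is doubled to $2(p-1)$. One then has to verify that every Lebesgue exponent invoked in the H\"older step --- in particular $2(p-1)$ --- lies in the admissible range $[1,2_s^*]$, which is exactly where the subcritical assumption $2<q<2_s^*$ of Theorem~\ref{thm:existence} is used; I expect this exponent bookkeeping, rather than any deep functional-analytic input, to be the delicate part of the argument.
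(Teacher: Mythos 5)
The paper offers no proof of Lemma~\ref{lem:num_radius} at all --- it is simply asserted in the applications section --- so there is no argument of the author's to compare yours against. Judged on its own terms, your outline follows the only plausible route (interpret $w_A(\mathcal{N})$ through a duality pairing, pull out $\|K\|_{L^\infty}$, apply H\"older, and close with the fractional Sobolev embedding $X\hookrightarrow L^r(\Omega)$ for $r\le 2_s^*$), and you are right to flag the two genuine difficulties: that $\mathcal{N}$ is nonlinear, so the definition \eqref{Num-A} of $w_A$ does not literally apply to it, and that the exponent must come out to $2(p-1)$. But you do not resolve either difficulty, and the second one cannot be resolved by the mechanism you propose. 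A homogeneity count shows the problem: $\mathcal{N}(u)$ is homogeneous of degree $p-1$ in $u$, so pairing it against $u$ itself (your ``self-pairing'') produces a quantity of degree $p$ in $u$, not $2(p-1)$; pairing it against a normalized test function $v$ produces degree $p-1$; and using the G\^ateaux linearization appearing in \eqref{eq:linearized_op}, namely $v\mapsto (p-1)\int_\Omega K(\cdot,y)|u_0(y)|^{p-2}v(y)\,dy$, produces degree $p-2$. None of these equals $2(p-1)$ except at isolated values of $p$, so the exponent in the statement is not reachable by any of the interpretations you list, and your proof plan stalls exactly at the point you yourself identify as ``the delicate part.''

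What your H\"older--Sobolev computation actually proves, once the pairing is fixed, is an estimate of the form
\begin{equation*}
\sup_{\|v\|_A=1}\left|\int_\Omega\int_\Omega K(x,y)\,|u(y)|^{p-2}u(y)\,v(x)\,dy\,dx\right|
\;\le\; C\,\|K\|_{L^\infty}\,\|u\|_A^{\,p-1},
\end{equation*}
with $C=C(n,s,p,\Omega)$, using $\||u|^{p-1}\|_{L^1(\Omega)}\le C\|u\|_{L^{p-1}(\Omega)}^{p-1}\le C\|u\|_A^{p-1}$ (valid when $p-1\le 2_s^*$) and $\|v\|_{L^1(\Omega)}\le C\|v\|_A$. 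That is a correct and useful bound, but it is not the stated inequality. To salvage the lemma as written you would either have to argue that the author intends a different object (for instance a bound on the quadratic form of the second variation of the nonlocal energy term $\frac{1}{2p}\int\!\!\int K|u(x)|^p|u(y)|^p$, whose coefficients are of degree $2p-2=2(p-1)$ in $u$ --- this is the one natural place that exponent arises), or conclude that the exponent in the statement is simply wrong. Either way, the proposal as it stands has a genuine gap: it does not produce the claimed right-hand side, and the step meant to generate the factor $\|u\|_A^{2(p-1)}$ is based on an incorrect homogeneity count.
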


\subsubsection{Stability Analysis}
The linearized operator at a solution $u_0$:
\begin{equation}\label{eq:linearized_op}
\mathcal{L}(u_0)v = (-\Delta)^s v + A(x)v - f_u(x,u_0)v - (p-1)\int_\Omega K(x,y)|u_0(y)|^{p-2}v(y)dy
\end{equation}

\begin{theorem}\label{thm:stability}
If $w_A(\mathcal{L}(u_0)) > 0$, then $u_0$ is asymptotically stable in the A-norm.
\end{theorem}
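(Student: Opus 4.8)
The plan is to interpret $u_0$ as an equilibrium of the linearised $A$-gradient flow attached to the energy functional \eqref{eq:energy_functional} and to use $t\mapsto\normA{u(t)-u_0}^2$ as a Lyapunov functional. Write $v=u-u_0$ and consider the Cauchy problem $\partial_t v+\mathcal{L}(u_0)v=R(v)$ on $X$, where $R(v)$ collects the second-order Taylor remainder of $s\mapsto f(x,s)$ at $u_0$ together with the remainder of the nonlocal term, namely $\int_\Omega K(\cdot,y)\bigl(|u_0(y)+v(y)|^{p-2}(u_0(y)+v(y))-|u_0(y)|^{p-2}u_0(y)-(p-1)|u_0(y)|^{p-2}v(y)\bigr)dy$. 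Using the subcritical growth bound on $f$ from hypothesis~(1) of Theorem~\ref{thm:existence}, the fractional Sobolev embedding $X\hookrightarrow L^{2_s^*}(\Omega)$, and the estimate of Lemma~\ref{lem:num_radius}, one gets $\normA{R(v)}\le C\normA v^{1+\delta}$ for some $\delta>0$ and all $v$ in a ball of $X$, so that $R$ is a genuine higher-order term in the $A$-norm; this also yields local well-posedness of the flow in $X$ by a standard fixed-point argument (cf.\ \cite{Evans}).

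Next I would record that $\mathcal{L}(u_0)$ is $A$-selfadjoint in $(X,\seqA{\cdot,\cdot})$: its principal part $(-\Delta)^s+A(x)$ is self-adjoint and coercive, $f_u(\cdot,u_0)$ acts by multiplication by a real function, and the kernel $K$ is symmetric, so the linearised nonlocal operator $v\mapsto(p-1)\int_\Omega K(\cdot,y)|u_0(y)|^{p-2}v(y)dy$ is self-adjoint; hence $\mathcal{L}(u_0)=\mathcal{L}(u_0)^{\sharp_A}$ and, by \eqref{equiv-Num-norm} and \cite{Zamani-2}, $w_A(\mathcal{L}(u_0))=\normA{\mathcal{L}(u_0)}$ while the $A$-numerical range $\set{\seqA{\mathcal{L}(u_0)v,v}:\normA v=1}$ is a compact real interval. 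The coercivity of the principal part and the relative compactness of the lower-order terms (from the subcritical growth of $f$ and $K\in L^\infty$) keep $0$ out of the $A$-spectrum, and combined with $w_A(\mathcal{L}(u_0))>0$ this forces that interval to lie in $[\gamma,\normA{\mathcal{L}(u_0)}]$ for some $\gamma=c_A(\mathcal{L}(u_0))>0$; equivalently $\seqA{\mathcal{L}(u_0)v,v}\ge\gamma\normA v^2$ for every $v\in X$.

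The conclusion is then a Grönwall argument. Differentiating the Lyapunov functional along the flow,
\[\tfrac12\,\tfrac{d}{dt}\normA{v(t)}^2=-\seqA{\mathcal{L}(u_0)v(t),v(t)}+\ReA\seqA{R(v(t)),v(t)}\le-\gamma\normA{v(t)}^2+C\normA{v(t)}^{2+\delta}.\]
If $\normA{v(0)}$ is small enough that $C\normA{v(0)}^{\delta}\le\gamma/2$, a continuation argument keeps $\normA{v(t)}$ in that regime for all $t\ge0$, whence $\normA{v(t)}\le e^{-\gamma t/2}\normA{v(0)}\to0$ as $t\to\infty$; this is precisely asymptotic stability of $u_0$ in the $A$-norm.

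The main obstacle is the quantitative step inside the second paragraph: upgrading the purely qualitative hypothesis $w_A(\mathcal{L}(u_0))>0$ to the coercivity constant $\gamma=c_A(\mathcal{L}(u_0))>0$. This is where the fine structure enters — one needs $0\notin$ the $A$-spectrum (nondegeneracy of $u_0$ as a critical point of $J$), which should be read off from the compact resolvent of $(-\Delta)^s+A(x)$ on $X$ and the relative compactness of the differential of the nonlinear part that is already established when verifying the Palais--Smale condition in Theorem~\ref{thm:existence}. A secondary, more technical point is that if $\mathcal{L}(u_0)$ is only approximately $A$-selfadjoint (e.g.\ after an asymmetric treatment of $K$), one replaces the equality $w_A=\normA{\cdot}$ by the Cartesian-decomposition refinements of Section~3, using \eqref{MM1} to control the anti-$A$-selfadjoint part of $\mathcal{L}(u_0)$ and still extract a positive coercivity constant.
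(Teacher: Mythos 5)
The paper offers no proof of Theorem \ref{thm:stability} at all (it is stated bare), so there is nothing to compare your argument against line by line; the relevant question is whether your proposal closes the statement, and it does not. The gap is exactly the one you flag in your final paragraph, and it is not a technical obstacle but a counterexample-level failure: by definition \eqref{Num-A}, $w_A(\mathcal{L}(u_0))>0$ only says $\sup_{\normA{v}=1}\abs{\seqA{\mathcal{L}(u_0)v,v}}>0$, i.e.\ that $\mathcal{L}(u_0)$ is not $A$-trivial. It carries no sign information, so it cannot be upgraded to the coercivity bound $\seqA{\mathcal{L}(u_0)v,v}\ge\gamma\normA{v}^2$ on which your entire Gr\"onwall step rests. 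Your proposed patch --- combining $w_A>0$ with $0$ being outside the $A$-spectrum to place the $A$-numerical range in $[\gamma,\normA{\mathcal{L}(u_0)}]$ --- fails for the same reason: for an $A$-selfadjoint operator with $0$ off its spectrum, the numerical range could just as well be $[-\normA{\mathcal{L}(u_0)},-\gamma]$, or an interval straddling negative values, and either situation is fully consistent with $w_A(\mathcal{L}(u_0))>0$ while producing exponential growth of $\normA{v(t)}$ along the corresponding directions. Worse, this is not a pathological scenario here: the solutions produced by Theorem \ref{thm:existence} come from the mountain pass lemma and generically have at least one direction $v$ with $\seqA{\mathcal{L}(u_0)v,v}<0$, so the hypothesis $w_A(\mathcal{L}(u_0))>0$ is satisfied by critical points that are manifestly \emph{unstable}.

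The hypothesis your argument actually needs is $A$-positivity of $\mathcal{L}(u_0)$ with a spectral gap --- for instance $\seqA{\mathcal{L}(u_0)v,v}\ge 0$ for all $v$ together with $c_A(\mathcal{L}(u_0))>0$ in the sense of \eqref{Craf}, or directly $\inf_{\normA{v}=1}\seqA{\mathcal{L}(u_0)v,v}=\gamma>0$. With that replacement, your outline (local well-posedness of the flow, the remainder estimate $\normA{R(v)}\le C\normA{v}^{1+\delta}$ from the subcritical growth and Lemma \ref{lem:num_radius}, the $A$-selfadjointness of $\mathcal{L}(u_0)$ from the symmetry of $K$, and the continuation-plus-Gr\"onwall closing argument) is a reasonable and essentially standard route to asymptotic stability. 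As written, however, the implication in the statement is not provable from $w_A(\mathcal{L}(u_0))>0$, and no amount of refinement via \eqref{MM1} or the Cartesian decomposition can recover a positive lower bound from a quantity that is insensitive to sign.
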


\subsubsection{ Numerical Approximation}
\subsubsection{ Finite Element Discretization}
For a mesh size $h>0$, let $V_h \subset X$ be a finite element space. The discrete problem:
\begin{equation}\label{eq:discrete_prob}
\langle u_h,v_h \rangle_A = \langle f(\cdot,u_h),v_h \rangle + \left\langle \int_\Omega K(\cdot,y)|u_h(y)|^{p-2}u_h(y)dy, v_h \right\rangle
\end{equation}

\subsubsection{ Error Estimates}
\begin{theorem}\label{thm:error_est}
For sufficiently small $h$, the error satisfies:
\begin{equation}
\|u - u_h\|_A \leq C\left( \inf_{v_h \in V_h} \|u - v_h\|_A + \|\mathcal{N}(u) - \mathcal{N}(u_h)\|_{A^*}\right)
\end{equation}
\end{theorem}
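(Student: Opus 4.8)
The plan is to run a C\'ea/Strang-type argument adapted to the semilinear nonlocal setting. The starting point is that, by \eqref{eq:A_norm}, the $A$-inner product $\langle\cdot,\cdot\rangle_A$ \emph{is} the inner product of $X$, so the bilinear form $a(w,v):=\langle w,v\rangle_A$ is bounded and coercive on $X$ with both constants equal to $1$; this is what makes the otherwise-standard argument go through.

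First I would record the two weak formulations: testing \eqref{eq:main_prob} against $v\in X$ and the discrete equation \eqref{eq:discrete_prob} against $v_h\in V_h\subset X$, then subtracting, gives the perturbed Galerkin orthogonality
$$\langle u-u_h,v_h\rangle_A=\langle f(\cdot,u)-f(\cdot,u_h),v_h\rangle+\langle \mathcal{N}(u)-\mathcal{N}(u_h),v_h\rangle\qquad\text{for all }v_h\in V_h.$$
Then, for an arbitrary $v_h\in V_h$, I would split $\|u-u_h\|_A^2=\langle u-u_h,u-v_h\rangle_A+\langle u-u_h,v_h-u_h\rangle_A$, apply the orthogonality identity to the second term (legitimate since $v_h-u_h\in V_h$), and bound each piece by Cauchy--Schwarz in the $A$/$A^*$ duality to obtain
$$\|u-u_h\|_A^2\le\|u-u_h\|_A\,\|u-v_h\|_A+\bigl(\|f(\cdot,u)-f(\cdot,u_h)\|_{A^*}+\|\mathcal{N}(u)-\mathcal{N}(u_h)\|_{A^*}\bigr)\|v_h-u_h\|_A.$$

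It remains to absorb the $f$-difference into the left-hand side, and this is precisely where the hypothesis ``for sufficiently small $h$'' enters. Writing $f(x,s_1)-f(x,s_2)=\int_0^1 f_u\bigl(x,s_2+t(s_1-s_2)\bigr)\,dt\,(s_1-s_2)$ and invoking the subcritical growth built into the hypotheses of Theorem~\ref{thm:existence}, together with the fractional Sobolev embedding $X\hookrightarrow L^{q}(\Omega)$ and H\"older's inequality, one gets $\|f(\cdot,u)-f(\cdot,u_h)\|_{A^*}\le L\,\|u-u_h\|_A$ with $L$ depending only on $\|u\|_X$ and on $\|u_h\|_X$, the latter controlled once $u_h$ lies in a fixed ball around $u$. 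A preliminary Brouwer fixed-point argument on $\{v_h\in V_h:\|v_h-\Pi_h u\|_A\le\rho\}$ (with $\Pi_h$ the $A$-orthogonal projection onto $V_h$) shows that for $h$ small such a discrete solution exists, so the Lipschitz regime applies. Using $\|v_h-u_h\|_A\le\|u-v_h\|_A+\|u-u_h\|_A$, Young's inequality, and absorption of the resulting $\|u-u_h\|_A^2$ terms, I arrive at $\|u-u_h\|_A\le C\bigl(\|u-v_h\|_A+\|\mathcal{N}(u)-\mathcal{N}(u_h)\|_{A^*}\bigr)$; taking the infimum over $v_h\in V_h$ finishes the proof.

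The main obstacle is exactly this absorption step: it requires that for small $h$ the discrete solution be unique in a fixed neighborhood of $u$ and that the linearized nonlinearity contribute a factor strictly smaller than the coercivity constant $1$, so that the $f$-term can actually be moved to the left — this is where non-degeneracy of the solution $u$, the Ambrosetti--Rabinowitz-type growth conditions, and compactness of the Nemytskii operator generated by $f$ are used. Everything else is a routine C\'ea-type computation. The nonlocal term $\mathcal{N}(u)-\mathcal{N}(u_h)$ is intentionally left on the right as a consistency-type quantity; if a fully a priori estimate is wanted it can be reprocessed via Lemma~\ref{lem:num_radius} and a local Lipschitz bound for $s\mapsto|s|^{p-2}s$.
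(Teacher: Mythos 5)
The paper states Theorem \ref{thm:error_est} with no proof at all, so there is nothing of the paper's to compare your argument against; your proposal supplies considerably more detail than the text does. Your overall route --- perturbed Galerkin orthogonality obtained by subtracting \eqref{eq:discrete_prob} from the weak form of \eqref{eq:main_prob}, the C\'ea-type splitting $\|u-u_h\|_A^2=\langle u-u_h,u-v_h\rangle_A+\langle u-u_h,v_h-u_h\rangle_A$, Cauchy--Schwarz in the $A$/$A^*$ duality, and the triangle inequality --- is the standard and correct strategy for a quasi-optimal estimate of this form, and you correctly identify why the nonlocal term may be left on the right as a consistency quantity while the local nonlinearity $f$ must be absorbed.

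The one genuine gap is the absorption step, and you have put your finger on it yourself without actually resolving it. The bound $\|f(\cdot,u)-f(\cdot,u_h)\|_{A^*}\le L\,\|u-u_h\|_A$ coming from subcritical growth and the embedding $X\hookrightarrow L^q(\Omega)$ produces a local Lipschitz constant $L$ that there is no reason to expect to be smaller than the coercivity constant $1$; the growth and Ambrosetti--Rabinowitz conditions of Theorem \ref{thm:existence} say nothing about the size of $L$, and non-degeneracy of $u$ (invertibility of $\mathcal{L}(u)$ in \eqref{eq:linearized_op}) does not shrink $L$ either. When $L\ge 1$, the step ``move $L\|u-u_h\|_A^2$ to the left'' simply fails. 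The standard repair is the Brezzi--Rappaz--Raviart argument: use non-degeneracy together with compactness of the Nemytskii derivative to show that the \emph{linearized} form $\langle w,v\rangle_A-\langle f_u(\cdot,u)w,v\rangle$ satisfies a discrete inf-sup condition on $V_h\times V_h$ for $h$ sufficiently small, and then estimate $u-u_h$ through that stable linearized problem rather than through coercivity of $\langle\cdot,\cdot\rangle_A$ alone. Your Brouwer fixed-point step for the existence of $u_h$ near $u$ is the right companion to this, but as written your absorption closes only under the unstated extra hypothesis $L<1$; with the inf-sup replacement (or with that hypothesis made explicit) the proof is complete.
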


\subsubsection{Applications}
\subsubsection{Fractional Schr\"odinger-Poisson Systems}
\begin{equation}\label{eq:schrodinger_poisson}
\begin{cases}
(-\Delta)^s u + A(x)u + \phi u = f(x,u) & \text{in } \Omega \\
(-\Delta)^t \phi = u^2 & \text{in } \Omega \\
u = \phi = 0 & \text{in } \mathbb{R}^n\setminus\Omega
\end{cases}
\end{equation}

\subsubsection{ Nonlocal Reaction-Diffusion}
\begin{equation}\label{eq:reaction_diffusion}
u_t + (-\Delta)^s u = \int_\Omega K(x,y)u^p(y)dy - u^q
\end{equation}

\subsubsection{Conclusion}
The A-norm framework provides:
\begin{itemize}
\item New existence results for nonlocal problems
\item Precise stability criteria through numerical radius
\item Systematic approach to numerical approximation
\item Unified treatment of local and nonlocal terms
\end{itemize}
\subsection{Quantum Applications of $A$-Numerical Radius Inequalities}\hfill

We demonstrate how Corollary \ref{Corollary3.7} on $A$-numerical radius inequalities provides quantitative bounds for quantum observables and their commutators. The results are applied to analyze the stability of quantum systems under perturbations and the time evolution of expectation values. A concrete example with Pauli matrices illustrates the theoretical findings.

\subsubsection{Quantum Mechanical Framework}
Let $\mathcal{H}$ be a complex Hilbert space representing a quantum system, with $\rho$ being a density operator (positive semi-definite with $tr(\rho)=1$). For an observable $T \in \mathcal{B}_A(\mathcal{H})$, we consider the \emph{$A$-numerical radius}:

\begin{equation}\label{eq:A-num-radius}
    w_A(T) := \sup_{\norm{\psi}_A=1} \abs{\bra{\psi | T \psi}_A},
\end{equation}

where $A$ is a positive operator defining the physical context (e.g., $A=\rho$ for mixed states or $A=I$ for pure states), and $\norm{\cdot}_A$ denotes the semi-norm induced by the semi-inner product $\bra{\cdot | \cdot}_A$.

\begin{theorem}[Quantum Uncertainty Bound]\label{thm:uncertainty}
    For any two observables $T,S \in \mathcal{B}_A(\mathcal{H})$:
    \begin{equation}\label{eq:uncertainty-bound}
        w_A(TS \pm ST) \leq 2\sqrt{2} \norm{S}_A \sqrt{w_A^2(T) - \frac{\abs{\norm{\Re_A T}_A^2 - \norm{\Im_A T}_A^2}}{2}},
    \end{equation}
    where $\Re_A T := \frac{1}{2}(T + T^{\sharp_A})$ and $\Im_A T := \frac{1}{2i}(T - T^{\sharp_A})$ are the $A$-real and $A$-imaginary parts of $T$, respectively. This inequality bounds the joint measurement uncertainty of non-commuting observables.
\end{theorem}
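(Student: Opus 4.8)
The plan is to observe that the claimed estimate \eqref{eq:uncertainty-bound} is, up to notation, exactly inequality \eqref{MM1} of Corollary~\ref{Corollary3.7}, so in principle the proof is the single line ``apply Corollary~\ref{Corollary3.7} with the operators $T$ and $S$.'' For a self-contained exposition I would instead unwind the chain of results on which \eqref{MM1} rests, which proceeds in three layers, each reducing to routine semi-inner-product manipulations.

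First, I would reprove the sharpened Cauchy--Schwarz-type bound of Lemma~\ref{LemmaTT2}: for $T$ with $w_A(T)\le 1$ and a unit $A$-norm vector $x$, write $\normA{Tx}^2+\normA{\Ta x}^2=\abs{\seqA{(T\Ta+\Ta T)x,x}}\le\normA{T\Ta+\Ta T}$, then apply Lemma~\ref{LemmaTT1} with $S=\Ta$ together with the identities $\normA{\Re_A T}=w_A(\Re_A T)$ and $\normA{\Im_A T}=w_A(\Im_A T)$ (valid because $\Re_A T$ and $\Im_A T$ are $A$-selfadjoint) and the bounds $w_A(\Re_A T),w_A(\Im_A T)\le w_A(T)\le 1$; this yields $\normA{Tx}^2+\normA{\Ta x}^2\le 4\bigl(1-\tfrac12\abs{\normA{\Re_A T}^2-\normA{\Im_A T}^2}\bigr)$. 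Second, for unit $A$-norm $x$ and for $\normA{S}\le 1$, decompose $\seqA{(TS\pm ST)x,x}=\seqA{Sx,\Ta x}\pm\seqA{Tx,\Sa x}$, bound each term by the Cauchy--Schwarz inequality for $\seqA{\cdot,\cdot}$ and by $\normA{Sx},\normA{\Sa x}\le\normA{S}\le 1$, so that $\abs{\seqA{(TS\pm ST)x,x}}\le\normA{\Ta x}+\normA{Tx}\le\sqrt{2}\,\bigl(\normA{\Ta x}^2+\normA{Tx}^2\bigr)^{1/2}$, and insert the first-layer bound; taking the supremum over $x$ gives \eqref{eq:uncertainty-bound} in the normalized case $w_A(T)\le 1$, $\normA{S}\le 1$. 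Third, I would homogenize: if $w_A(T)=0$ or $\normA{S}=0$ the left side vanishes (by \eqref{product-1}, $w_A(TS\pm ST)\le 4w_A(T)w_A(S)\le 4w_A(T)\normA{S}$), so assume both are nonzero and replace $T$ by $T/w_A(T)$ and $S$ by $S/\normA{S}$, using the absolute homogeneity of $w_A$, $\normA{\cdot}$, $\Re_A$ and $\Im_A$, then clear denominators.

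The only point requiring care is the scaling inside the radical in the last step: one must verify that $\normA{\Re_A(T/w_A(T))}^2-\normA{\Im_A(T/w_A(T))}^2=w_A(T)^{-2}\bigl(\normA{\Re_A T}^2-\normA{\Im_A T}^2\bigr)$, so that a factor $w_A(T)$ emerges from the square root and combines with $\sqrt{w_A^2(T)-\cdots}$ exactly as in the passage from \eqref{GN1} to \eqref{GN2}; everything else is bookkeeping. I do not anticipate a genuine obstacle, since the inequality is a specialization of an already-proved corollary — the ``work'' lies entirely in presenting the reduction cleanly and checking the homogeneity of each quantity appearing on the right-hand side.
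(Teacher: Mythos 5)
Your proposal is correct and follows exactly the paper's route: the paper's own proof is the one-line observation that the statement is inequality \eqref{MM1} of Corollary~\ref{Corollary3.7} (itself the $X=Y=I$ case of Theorem~\ref{TheoremGN1}), and the chain you unwind --- Lemma~\ref{LemmaTT1} with $S=\Ta$, Lemma~\ref{LemmaTT2}, the decomposition $\seqA{(TS\pm ST)x,x}=\seqA{Sx,\Ta x}\pm\seqA{Tx,\Sa x}$, and the homogenization $T\mapsto T/w_A(T)$ --- is precisely the paper's derivation of that corollary. Your extra care about the scaling of $\normA{\Re_A T}^2-\normA{\Im_A T}^2$ under $T\mapsto T/w_A(T)$ is the same bookkeeping the paper performs in passing from \eqref{GN1} to \eqref{GN2}.
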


\begin{proof}
    Direct application of Corollary~3.7 with $X=Y=I$.
\end{proof}

\subsubsection{Physical Applications}

\subsubsection{Time Evolution of Observables}
For Hamiltonian $H$ and observable $T$, the Heisenberg evolution satisfies:
\begin{equation}
    \frac{dT}{dt} = i[H,T].
\end{equation}
Applying our inequality yields:
\begin{equation}\label{eq:time-evolution-bound}
    w_A\left(\frac{dT}{dt}\right) \leq 2\sqrt{2} \norm{H}_A \sqrt{w_A^2(T) - \frac{\abs{\norm{\Re_A T}_A^2 - \norm{\Im_A T}_A^2}}{2}},
\end{equation}
which quantifies the maximal rate of change for expectation values.

\subsubsection{Perturbation Theory}
For a perturbed Hamiltonian $H_\epsilon = H_0 + \epsilon V$, the commutator bound controls:
\begin{equation}\label{eq:perturbation-bound}
    \norm{e^{itH_\epsilon} - e^{itH_0}}_A \leq 2\sqrt{2} \epsilon t \norm{V}_A \sqrt{w_A^2(H_0) - \frac{\Delta_H}{2}} + \mathcal{O}(\epsilon^2),
\end{equation}
where $\Delta_H := \abs{\norm{\Re_A H_0}_A^2 - \norm{\Im_A H_0}_A^2}$.

\subsubsection{Example}
Consider the Pauli matrices $\sigma_x, \sigma_y$ with $A = \frac{1}{2}(I + \sigma_z)$ representing a mixed state:
\begin{align*}
    \sigma_x &= \begin{pmatrix}0 & 1 \\ 1 & 0\end{pmatrix}, \quad
    \sigma_y = \begin{pmatrix}0 & -i \\ i & 0\end{pmatrix}, \quad
    A = \begin{pmatrix}1 & 0 \\ 0 & 0\end{pmatrix}, \\
    w_A(\sigma_x) &= 1, \quad \norm{\Re_A(\sigma_x)}_A^2 = 1, \quad \norm{\Im_A(\sigma_x)}_A^2 = 0.
\end{align*}

The commutator $[\sigma_x, \sigma_y] = 2i\sigma_z$ satisfies:
\begin{equation}\label{eq:pauli-bound}
    w_A([\sigma_x, \sigma_y]) = 2 \leq 2\sqrt{2} \cdot 1 \cdot \sqrt{1^2 - \frac{\abs{1-0}}{2}} = 2,
\end{equation}
which saturates the bound, demonstrating optimality for canonical conjugate observables.

\subsubsection{Conclusion}
Corollary \ref{Corollary3.7} provides fundamental limits on:
\begin{itemize}
    \item Quantum measurement precision via \eqref{eq:uncertainty-bound}
    \item Stability of quantum control systems through \eqref{eq:perturbation-bound}
    \item Error bounds in quantum simulations using \eqref{eq:time-evolution-bound}
\end{itemize}
\subsection{Application of $A$-Numerical Radius Inequalities to Elliptic Partial Differential Equations}\hfill

We demonstrate how Theorem \ref{TheoremQA1} on $A$-numerical radius inequalities for operator sums can be applied to analyze the stability of solutions to elliptic boundary value problems. The results provide quantitative bounds for commutators arising in perturbation theory and finite element approximations.

\subsubsection{Mathematical Framework}
Let $\Omega \subset \mathbb{R}^n$ be a bounded Lipschitz domain. Consider the Sobolev space $\mathcal{H} = H^1_0(\Omega)$ equipped with the energy inner product:
\[
\langle u, v \rangle_A := \int_\Omega \nabla u \cdot \nabla v \, dx
\]
induced by the positive definite operator $A = -\Delta$ with domain $D(A) = H^2(\Omega) \cap H^1_0(\Omega)$.

\begin{definition}
The $A$-numerical radius $w_A(T)$ of an operator $T \in \mathcal{B}_A(\mathcal{H})$ is:
\[
w_A(T) = \sup_{\|u\|_A=1} |\langle Tu, u \rangle_A|
\]
\end{definition}

\subsubsection{Main Application}
Consider the perturbed elliptic problem:
\begin{equation}
\begin{cases}
-\Delta u + V(x)u = f & \text{in } \Omega \\
u = 0 & \text{on } \partial\Omega
\end{cases}
\end{equation}
where $V \in L^\infty(\Omega)$ is a potential function.

\begin{theorem}
Let $T = M_V$ (multiplication by $V$) and $S = (-\Delta)^{-1}M_V$ be operators on $\mathcal{H}$. Then:
\[
w_A(TS + ST) \leq 2\sqrt{2}\|V\|_{L^\infty} w_A(S)
\]
\end{theorem}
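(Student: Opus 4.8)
The plan is to read the stated inequality as a direct specialization of Theorem~\ref{TheoremQA1}, which gives $w_A(TS+ST)\le 2\sqrt{2}\,\normA{T}\,w_A(S)$ for all $T,S\in\b_A(\h)$. Hence the whole task reduces to two verifications inside the PDE setting $A=-\Delta$, $\h=H^1_0(\Omega)$: that $T=M_V$ and $S=(-\Delta)^{-1}M_V$ indeed lie in $\b_A(\h)$, and that $\normA{M_V}\le\|V\|_{L^\infty}$. Granting both, plugging into Theorem~\ref{TheoremQA1} gives $w_A(TS+ST)\le 2\sqrt{2}\,\normA{M_V}\,w_A(S)\le 2\sqrt{2}\,\|V\|_{L^\infty}\,w_A(S)$, which is exactly the assertion.

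First I would verify membership in $\b_A(\h)$. For the inverse Laplacian it is immediate: $(-\Delta)^{-1}$ is $A$-selfadjoint, since $A(-\Delta)^{-1}=(-\Delta)(-\Delta)^{-1}=I$ is selfadjoint, and every $A$-selfadjoint operator belongs to $\b_A(\h)$. For the multiplication operator one appeals to the Douglas-type criterion~\eqref{Ineq.A1}, namely $\b_A(\h)=\{R\in\bh:\r(R^{*}A)\subseteq\r(A)\}$; using that $-\Delta$ with homogeneous Dirichlet data is a topological isomorphism of $H^1_0(\Omega)$ onto $H^{-1}(\Omega)$, the range inclusion for $R=M_V$ is verified for bounded $V$, so $M_V\in\b_A(\h)$. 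Since $\b_A(\h)$ is a subalgebra, the product $S=(-\Delta)^{-1}M_V$ lies in it as well, so that $w_A(S)$ is a well-defined finite quantity by~\eqref{Num-A} and~\eqref{equiv-Num-norm}.

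The substantive step, which I expect to be the main obstacle, is the estimate $\normA{M_V}\le\|V\|_{L^\infty}$. Written out, $\normA{M_V}=\sup\{\,\abs{\seqA{M_Vu,v}}:\normA{u}=\normA{v}=1\,\}$ with $\seqA{M_Vu,v}=\int_{\Omega}\nabla(Vu)\cdot\nabla v\,dx$; splitting $\nabla(Vu)=V\,\nabla u+u\,\nabla V$, the first piece is controlled at once by $\|V\|_{L^\infty}\normA{u}\,\normA{v}$ through Cauchy--Schwarz, and the genuine difficulty is the cross term carrying a derivative of $V$, which has to be absorbed either by using the regularity of the potential or, preferably, by exploiting that in the symmetrized product $TS+ST$ the smoothing factor $(-\Delta)^{-1}$ compensates exactly those derivatives before the seminorm is taken. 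Once this bound is secured the conclusion is immediate from Theorem~\ref{TheoremQA1}; if one wants a constant expressed purely through $V$ one may in addition use $w_A(S)\le\normA{S}\le\normA{(-\Delta)^{-1}}\,\normA{M_V}\le\normA{(-\Delta)^{-1}}\,\|V\|_{L^\infty}$, the first inequality being~\eqref{equiv-Num-norm} and the second the submultiplicativity $\normA{TS}\le\normA{T}\normA{S}$ recorded earlier.
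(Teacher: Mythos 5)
Your overall strategy --- reduce everything to Theorem~\ref{TheoremQA1}, check that $M_V$ and $(-\Delta)^{-1}M_V$ lie in $\b_A(\h)$, and then establish $\normA{M_V}\le\|V\|_{L^\infty}$ --- is exactly the route the paper takes, and you have correctly isolated the one step that carries all the weight. But that step is left open in your write-up, and it is a genuine gap rather than a routine verification: with $A=-\Delta$ the seminorm is the Dirichlet energy $\normA{u}=\|\nabla u\|_{L^2}$, so $\normA{M_V}$ controls $\|\nabla(Vu)\|_{L^2}$, and the Leibniz term $u\,\nabla V$ is simply not dominated by $\|V\|_{L^\infty}\|\nabla u\|_{L^2}$. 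For $V$ merely in $L^\infty(\Omega)$ the operator $M_V$ need not even map $H^1_0(\Omega)$ into itself, and for Lipschitz $V$ the best one gets via Poincar\'e is $\normA{M_V}\le \|V\|_{L^\infty}+C_\Omega\|\nabla V\|_{L^\infty}$, which is strictly worse than the constant claimed in the statement. Your fallback idea --- that the smoothing factor $(-\Delta)^{-1}$ inside the symmetrized product might absorb the derivative of $V$ --- cannot rescue the argument, because Theorem~\ref{TheoremQA1} is applied with the prefactor $\normA{T}=\normA{M_V}$ computed before any symmetrization; nothing in that theorem lets cancellation inside $TS+ST$ improve the constant.

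For what it is worth, the paper's own proof does not close this gap either: it only verifies $|\seqA{M_Vu,u}|\le\|V\|_{L^\infty}\|u\|_{L^2}^2\le C\normA{u}^2$ (a quadratic-form bound with an unspecified Poincar\'e constant $C$, which moreover treats $\seqA{M_Vu,u}$ as an $L^2$ pairing rather than $\int_\Omega\nabla(Vu)\cdot\nabla u\,dx$) and then invokes Theorem~\ref{TheoremQA1} ``directly,'' which would yield the constant $2\sqrt{2}\,\normA{M_V}$, not $2\sqrt{2}\,\|V\|_{L^\infty}$. So your diagnosis of where the difficulty sits is accurate and more honest than the source; what is missing, in both, is either an added hypothesis such as $V\in W^{1,\infty}(\Omega)$ with the constant adjusted accordingly, or a different ambient space (e.g.\ one in which $A$ acts so that $\normA{M_V}\le\|V\|_{L^\infty}$ genuinely holds).
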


\begin{proof}
1. Verify $T,S \in \mathcal{B}_A(\mathcal{H})$:
\begin{itemize}
\item $T$ is bounded since $|\langle M_V u, u \rangle_A| \leq \|V\|_\infty \|u\|_{L^2}^2 \leq C\|u\|_A^2$
\item $S$ is bounded by elliptic regularity theory
\end{itemize}

2. Apply Theorem \ref{TheoremQA1} directly to obtain the inequality.
\end{proof}

\subsubsection{Implications for Numerical Analysis}
This result has important consequences for:

\subsubsection{Finite Element Methods}
When discretizing the problem using Galerkin approximation with basis $\{\phi_j\}_{j=1}^N$, the stiffness matrix $K$ and mass matrix $M$ satisfy:
\[
w_{K}(KM^{-1}V + VM^{-1}K) \leq 2\sqrt{2}\|V\|_\infty w_{K}(M^{-1}V)
\]
where $w_K$ is the discrete $A$-numerical radius.

\subsubsection{Perturbation Theory}
For the perturbed operator $L_\epsilon = -\Delta + \epsilon V$, the commutator bound controls:
\[
\|e^{tL_\epsilon} - e^{tL_0}\|_A \leq C\epsilon w_A(TS + ST)t + O(\epsilon^2)
\]
providing quantitative stability estimates.

\subsubsection{Numerical Example}
Let $\Omega = (0,1)^2$, $V(x,y) = \sin(\pi x)\sin(\pi y)$, and consider the finite difference discretization with $h=1/N$:

\begin{table}[h]
\centering
\begin{tabular}{c|c|c}
$N$ & Left side & Right side \\ \hline
10 & 0.127 & 0.183 \\
20 & 0.131 & 0.179 \\
40 & 0.132 & 0.177 \\
\end{tabular}
\caption{Verification of the inequality for different discretizations}
\end{table}

\subsubsection{Conclusion}
Theorem 3.2 provides computable bounds for operator commutators arising in elliptic PDEs. These estimates are particularly valuable for:
\begin{itemize}
\item Stability analysis of numerical schemes
\item Perturbation theory for Schrödinger operators
\item Convergence analysis of iterative methods
\end{itemize}

\section{Conclusion and Future Work}

In this paper, a number of novel inequalities for the $A$-numerical radius and $A$-operator norm of sums of bounded linear operators in Hilbert spaces are established. Specifically, we examine the consequences of the generalized triangle inequality and modify it for the operator norm. We also study the Cartesian decomposition of an operator and derive inequalities that improve and extend earlier findings in the literature. We validate the proposed boundaries with specific examples and applications to demonstrate the usefulness of our results. These findings broaden our understanding of numerical radius inequalities and help us better comprehend operator behavior.

The extension of these inequalities to larger classes of operators, such as unbounded operators in semi-Hilbertian spaces, may be the main goal of future studies. Additional information may be obtained by investigating the interaction between the spectral qualities of operators and the $A$-numerical radius. The creation of numerical methods for effectively calculating the $A$-numerical radius in real-world scenarios is another exciting avenue. Furthermore, more research is necessary to fully understand how these revised inequalities are applied in domains like signal processing, control theory, and quantum physics. The study of operator theory and functional analysis might advance thanks to these possible developments.

\section*{Declaration }
\begin{itemize}
  \item {\bf Author Contributions:}   The authors have read and agreed to the published version of the manuscript.
  \item {\bf Funding:} No funding is applicable
  \item  {\bf Institutional Review Board Statement:} Not applicable.
  \item {\bf Informed Consent Statement:} Not applicable.
  \item {\bf Data Availability Statement:} Not applicable.
  \item {\bf Conflicts of Interest:} The authors declare no conflict of interest.
\end{itemize}

\bibliographystyle{abbrv}
\bibliography{references}  






\end{document}